\renewcommand{\epsilon}{\varepsilon}
\newcommand{\q}{\mathbf{q}}
\newcommand{\R}{\mathbb{R}}
\newcommand{\Z}{\mathbb{Z}}
\newcommand{\C}{\mathbb{C}}
\newcommand{\interior}{\operatorname{int}}
\renewcommand{\phi}{\varphi}
\newcommand{\pp}{\mathbf{p}}
\newcommand{\qq}{\mathbf{q}}
\newcommand{\xx}{\mathbf{x}}
\newcommand{\sg}{\mathrm{sgn}}
\newtheorem{thm}{Theorem}
\newtheorem{lemma}[thm]{Lemma}
\newtheorem{prop}[thm]{Proposition}
\newtheorem{cor}[thm]{Corollary}
\theoremstyle{definition}
\newtheorem{definition}[thm]{Definition}
\theoremstyle{remark}
\newtheorem{rmk}[thm]{Remark}
\title{Symplectic embeddings and the lagrangian bidisk}
\author{Vinicius Gripp Barros Ramos}
\date{}
\begin{document}

\maketitle

\abstract{In this paper we obtain sharp obstructions to the symplectic embedding of the lagrangian bidisk into four-dimensional balls, ellipsoids and symplectic polydisks. We prove, in fact, that the interior of the lagrangian bidisk is symplectomorphic to a concave toric domain using ideas that come from billiards on a round disk. In particular, we answer a question of Ostrover \cite{osticm}. We also obtain sharp obstructions to some embeddings of ellipsoids into the lagrangian bidisk.}
\section{Introduction}

Symplectic embedding questions have been central in the study of symplectic manifolds. The first of such questions was studied by Gromov in \cite{gromov}. After that, many techniques were created to deal with the questions of when symplectic embeddings exist. Symplectic capacities are one of such techniques and they provide an obstruction to the existence of a symplectic embedding. An interesting general question is whether a certain capacity is sharp for a certain embedding problem, i.e., whether the symplectic embedding exists if and only if this capacity does not give an obstruction to it.

ECH capacities are a sequence of capacities which are defined for four-dimensional symplectic manifolds \cite{qech}. For a symplectic manifold $(X^4,\omega)$, there is a sequence of real numbers:
\[0=c_0(X,\omega)< c_1(X,\omega)\le c_2(X,\omega) \le \dots \le\infty.\]
These numbers satisfy the following properties:
\begin{enumerate}[(i)]
\item If $a>0$, then $c_k(X,a\cdot \omega)=a \cdot c_k(X,\omega)$, for every $k$.
\item If $(X_1,\omega_1)$ symplectically embeds into $ (X_2,\omega_2)$, then \[c_k(X_1,\omega_1)\le c_k(X_2,\omega_2),\text{ for all $k$}.\]
 \item \begin{equation}\label{eq:disj}
c_k\left(\coprod_{i=1}^n (X_i,\omega_i)\right)=\max\left\{\sum_{i=1}^n c_{k_i}(X_i,\omega_i)\,\Bigg|\,k_1+\dots+k_n=k\right\}.
\end{equation}
\end{enumerate}
ECH capacities have been computed for many manifolds and they have been shown to be sharp for several symplectic embedding questions in dimension 4, see for example \cite{mcdel,ccfhr,dan2}. We recall that ECH capacities are said to be sharp for a certain embedding problem $(X_1,\omega_1)\hookrightarrow (X_2,\omega_2)$ if 
\[c_k(X_1,\omega_1)\le c_k(X_2,\omega_2),\,\forall \, k\;\Longrightarrow(X_1,\omega_1)\hookrightarrow (X_2,\omega_2).\]
In this paper the symbol $\hookrightarrow$ will always denote a symplectic embedding.
The goal of this paper is to prove some new results concerning the symplectic embeddings of the lagrangian bidisk into four-dimensional balls, ellipsoids and polydisks. In particular, this answers a question of Yaron Ostrover in \cite[\S 5]{osticm}.

We will now set up our notation. We will always consider $\R^4=\C^2$ with coordinates $(p_1,q_1,p_2,q_2)=(z_1,z_2)$ and its subsets  endowed with the symplectic form
\[\omega=\sum_{i=1}^2 dp_i\wedge dq_i.\] The main domain we are interested in is the lagrangian bidisk in $\R^4$, denoted by $P_L$, which is defined to be \[P_L=\{(p_1,q_1,p_2,q_2)\in \R^4\,|\,p_1^2+p_2^2\le 1,q_1^2+q_2^2\le 1\}.\]
We observe that the lagrangian product of any two disks is symplectomorphic to a multiple of $P_L$. We now define the ellipsoids $E(a,b)$ and the symplectic polydisks $P(a,b)$ as follows.
\begin{align*}
E(a,b)&=\left\{(z_1,z_2)\in\C^2\,\Bigg|\,\pi\left(\frac{|z_1|^2}{a}+\frac{|z_2|^2}{b}\right)\le 1\right\},\\
P(a,b)&=\left\{(z_1,z_2)\in\C^2\,|\,\pi|z_1|^2\le a,\pi|z_2|^2\le b\right\}.
\end{align*}
We denote the Euclidean ball of radius $\sqrt{a/\pi}$ by $B(a):=E(a,a)$.

The main result of this paper is the following theorem.
\begin{thm}\label{thm:emb}
ECH capacities give a sharp obstruction to symplectically embedding the interior of $P_L$ into balls, ellipsoids and symplectic polydisks. Moreover,
\begin{enumerate}[(a)]
\item $\interior(P_L) \hookrightarrow B(a)$ if and only if $a\ge3\sqrt{3}$,
\item $\interior(P_L) \hookrightarrow E(a,b)$ if and only if $\min(a,b)\ge4$ and $\max(a,b)\ge3\sqrt{3}$,
\item $\interior(P_L) \hookrightarrow P(a,b)$ if and only if $a,b\ge4$.
\end{enumerate}
\end{thm}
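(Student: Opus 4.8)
The strategy has two halves: (1) show that $\interior(P_L)$ is symplectomorphic to a concave toric domain $X_\Omega$ for an explicit domain $\Omega$, and (2) use the known formula for ECH capacities of concave toric domains together with known sharp embedding results to deduce the three equivalences.

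For part (1), I would start from the billiard picture suggested in the abstract. The lagrangian bidisk $P_L$ is the lagrangian product $D^2\times D^2$, and there is a classical correspondence (going back to the ``billiard duality'') between the lagrangian product of a convex body with a disk and the unit disk cotangent bundle perspective: a point $(\pp,\qq)$ with $|\pp|\le 1$, $|\qq|\le 1$ can be read as a position/velocity pair, and the Hamiltonian flow of $|\pp|^2$ (or of an appropriate function) generates billiard trajectories in the round disk $\{|\qq|\le1\}$. Concretely, I would look for coordinates in which the two obvious $S^1$-symmetries (rotation in the $q$-disk, and the billiard ``bounce'' dynamics) become the standard torus action on $\C^2$. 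Using the angular momentum $\mu = p_1q_2 - p_2q_1$ and, say, the quantity measuring the chord (the time between bounces, essentially $\arccos$ of something like $\langle \pp,\qq\rangle/(|\pp|\,|\qq|)$ suitably normalized), one gets two Poisson-commuting functions whose joint image is a region $\Omega\subset\R^2_{\ge0}$. The main work is to (i) verify these functions generate a genuine (effective, proper) toric action on $\interior(P_L)$, (ii) compute the moment image $\Omega$ explicitly, and (iii) check that $\Omega$ is \emph{concave} in the sense of Cristofaro-Gardiner, i.e.\ it is the region below the graph of a convex, decreasing function $f\colon[0,c]\to[0,d]$ with $f(0)=d$, $f(c)=0$. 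I expect the boundary curve of $\Omega$ to be parametrized by the family of billiard chords and to be something like the curve traced by $(\text{action}, \text{angular momentum})$ as the chord angle varies, which should come out to a specific algebraic or trigonometric curve; the normalization should be pinned down by the constraints in Theorem~\ref{thm:emb}, namely that $\Omega$ meets the axes at $4$ and at $3\sqrt3$ in the appropriate way (these are $2\pi/\text{something}$-type numbers; note $3\sqrt3 = (3/2)\cdot 2\sqrt3$ and $4$ should be the ``width'' governing the polydisk obstruction).

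For part (2), once $\interior(P_L)\cong X_\Omega$ with $\Omega$ concave and explicitly known, I would invoke the formula of Cristofaro-Gardiner expressing $c_k(X_\Omega)$ in terms of lattice point counts / the ``weight expansion'' of $\Omega$, and likewise the formulas for $c_k(B(a))$, $c_k(E(a,b))$, $c_k(P(a,b))$. For the ``only if'' directions, it suffices to exhibit, for each claimed threshold, a single index $k$ at which the capacity inequality $c_k(\interior P_L)\le c_k(\text{target})$ fails when the parameter is below threshold — e.g.\ $c_1$ (the smallest action, an embedded disk area) should already give $a\ge 3\sqrt3$ for the ball or the $\max\ge 3\sqrt3$ condition for the ellipsoid, while the polydisk and the $\min\ge 4$ conditions should come from a different capacity detecting the ``long thin'' direction of $\Omega$. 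The precise matching of which $c_k$ gives which inequality is a finite computation using the weight expansion. For the ``if'' directions, I would \emph{not} try to build embeddings by hand; instead I would use that a concave toric domain $X_\Omega$ symplectically embeds into an ellipsoid/polydisk/ball iff a corresponding disjoint union of balls does (Cristofaro-Gardiner's reduction of concave toric domains to ball packings), and then cite the solved ball-packing problems (McDuff--Schlenk for balls, McDuff for ellipsoids, and the polydisk packing results) to conclude that the ECH obstruction being the only obstruction is already known on the target side.

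\textbf{Main obstacle.} The crux is part (1): producing the explicit symplectomorphism $\interior(P_L)\cong X_\Omega$ and correctly identifying $\Omega$. Writing down two commuting Hamiltonians is easy; the delicate points are (a) that the resulting $T^2$-action is free enough on the interior that $X_\Omega$ is an honest (not ``folded'') toric domain, (b) handling the corner/boundary strata where $|\pp|=1$ or $|\qq|=1$ (the billiard bounces) — this is exactly where the interior hypothesis is used, and one must check the moment map extends continuously and the image is the closed region $\Omega$ minus its outer boundary — and (c) getting the scaling and the shape of the curve exactly right, since the entire numerical content of the theorem ($3\sqrt3$, $4$) is encoded in $\Omega$. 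I would expect to verify the shape by computing the boundary as the envelope of the billiard chord family and double-checking against the $c_1$ and $c_2$ values, which should independently reproduce $3\sqrt3$ and $4$.
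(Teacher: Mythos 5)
Your high-level architecture matches the paper: prove $\interior(P_L)\cong\interior(X_\Omega)$ for an explicit concave toric domain via billiard dynamics, then apply Cristofaro-Gardiner's sharpness theorem for concave-into-convex embeddings. The ``only if'' computations with $c_1$ and $c_2$ are also the right idea (though note $c_2(X_0)=3\sqrt{3}$ is what governs the ball constraint, not $c_1$; and $\Omega_0$ meets both axes at $2\pi$, not at $4$ or $3\sqrt{3}$, those numbers being $c_1$ and $c_2$, i.e.\ the first two weights).

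The serious gap is in your plan for the ``if'' directions. Cristofaro-Gardiner's theorem does not remove the need to verify anything; it \emph{reduces} the existence of an embedding to the assertion $c_k(X_0)\le c_k(\text{target})$ for \emph{all} $k$, which is infinitely many inequalities, of which checking $c_1$ and $c_2$ is only the first step. You propose to then ``cite the solved ball-packing problems,'' but the weight sequence of $\Omega_0$ is infinite with irrational ratios, so the finite/rational ball-packing algorithms (McDuff, McDuff--Schlenk) do not apply off the shelf. The paper closes this gap with Proposition~\ref{prop:emb2}: it partitions the weight sequences of $X_0$ and of $E(4,3\sqrt{3})$ so that, after cancelling the shared first three weights via Lemma~\ref{lemma:sqcup}, the comparison reduces to a \emph{finite, explicit} triangle packing into a rescaled triangle (Figure~\ref{fig:final}). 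The ball and ellipsoid cases then follow from $E(4,3\sqrt{3})\subset B(3\sqrt{3})$, while part~(c) is handled separately by an explicit formula $\interior(P_L)\hookrightarrow P(4,4)$, sidestepping capacity comparison with the polydisk altogether (which would otherwise require yet another infinite verification). A second gap, more technical, is in part (1): you correctly flag the corner stratum $|\pp|=|\qq|=1$ as the delicate point, but offer no plan. The paper's resolution is the $\epsilon$-regularization: exhaust $\interior(P_L)$ by smooth Liouville domains $P_\epsilon=\{\tfrac12(|\pp|^2+\epsilon U(|\qq|^2))\le\tfrac12\}$, replacing billiard bounces by smooth Reeb dynamics, build toric action-angle coordinates on each $\partial P_\epsilon$, compute the moment image $\Omega_\epsilon$, and take $\epsilon\to0$ using a Gromov--McDuff argument plus the symplectic isotopy extension theorem to glue the family of embeddings.
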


\begin{rmk}
Part (c) of Theorem \ref{thm:emb} was previously known by \cite{wong}. The proof combines the explicit construction of an embedding and Gromov's non-squeezing theorem.
\end{rmk}

\subsection{Toric domains}

Although understanding symplectic embeddings of four-dimensional symplectic manifolds in general is a very hard problem, many results are known for a certain class of manifolds called toric domains, which are constructed as follows. If $\Omega$ is a closed region in the first quadrant of $\R^2$, we define the \textit{toric domain} $X_{\Omega}\subset\C^2$ to be
\[X_{\Omega}=\{(z_1,z_2)\in\C^2;\pi(|z_1|^2,|z_2|^2)\in\Omega\}.\]
We endow $X_{\Omega}$ with the restriction of the standard symplectic form in $\C^2$.

The main result needed to prove Theorem \ref{thm:emb} is the following theorem.
\begin{thm}\label{thm:symp}
Let $X_0$ be the toric domain $X_{\Omega_0}$, where $\Omega_0$ is the region bounded by the coordinate axes and the curve parametrized by \begin{equation}\label{eq:param}\left(2\sin\left(\frac{\alpha}{2}\right)-\alpha\cos\left(\frac{\alpha}{2}\right),2\sin\left(\frac{\alpha}{2}\right)+(2\pi-\alpha)\cos\left(\frac{\alpha}{2}\right)\right),\qquad\alpha\in[0,2\pi].\end{equation} Then $\interior(P_L)$ and $\interior(X_0)$ are symplectomorphic.
\end{thm}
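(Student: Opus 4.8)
The plan is to build an explicit symplectomorphism by interpreting $P_L$ through the dynamics of billiards on the round disk, following the philosophy that a lagrangian product $D\times D$ of the unit disk with itself carries a natural "billiard" Hamiltonian flow. Concretely, I would first identify $\interior(P_L)\setminus\{p=0\}$ — where $p=(p_1,p_2)$ — with an open subset of the cotangent bundle of the open disk (or rather of an annulus-like region), using $q$ as position and $p$ as momentum; the boundary $p_1^2+p_2^2=1$ together with $q_1^2+q_2^2\le 1$ should be viewed as the unit cosphere bundle, and the billiard reflection on $\partial(q_1^2+q_2^2\le1)$ identifies incoming and outgoing rays. The key geometric input is that a chord of the unit circle is determined by two parameters: the angle $\alpha\in[0,2\pi]$ it subtends at the center (equivalently its length $2\sin(\alpha/2)$) and an angular position; the quantity $2\sin(\alpha/2)$ is exactly the first coordinate in \eqref{eq:param}, which strongly suggests that $\alpha$ is the action variable of the billiard map and the curve in \eqref{eq:param} records the pair of symplectic areas (actions) swept out on the two sides of the chord.

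The concrete steps I would carry out: (1) Introduce on $\interior(P_L)$ a set of "action--angle-like" coordinates $(\mu_1,\theta_1,\mu_2,\theta_2)$ in which $\omega=\sum d\mu_i\wedge d\theta_i$ and in which the torus action generated by the two $S^1$-rotations of a toric domain becomes visible. For the lagrangian bidisk the obvious circle action is simultaneous rotation of $q$ and $p$ (which preserves both $|p|$ and $|q|$); its moment map gives one of the $\mu_i$, essentially $p_1q_2-p_2q_1$ or a function thereof. (2) Show that a second, commuting circle action exists on the interior after removing a measure-zero set — this is the "hidden" symmetry coming from the integrability of the disk billiard (rotational symmetry of the billiard map), and produce its moment map. (3) Compute the image of the joint moment map $(\mu_1,\mu_2)$ and verify it is precisely $\Omega_0$ bounded by \eqref{eq:param}; here the parametrization by $\alpha$ falls out because $\alpha$ labels the invariant circles of the billiard map, $2\sin(\alpha/2)$ is the chord length, and the two components of \eqref{eq:param} are the areas of the two circular segments cut off, with weights $\alpha$ and $2\pi-\alpha$ coming from how many times the billiard trajectory wraps. (4) Invoke the standard fact that a domain in $\C^2$ admitting an effective Hamiltonian $T^2$-action with proper moment map onto $\Omega$ is symplectomorphic to $X_\Omega$ (uniqueness of toric structures, via Delzant-type / Arnold--Liouville arguments on the open dense set, then extended), to conclude $\interior(P_L)\cong\interior(X_{\Omega_0})$.

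The main obstacle I expect is step (2)–(3): exhibiting the second circle action and proving global smoothness/properness of the resulting moment map, not just on a dense open set but in a way that genuinely produces a symplectomorphism onto $\interior(X_{\Omega_0})$ rather than merely onto its complement of some circles. Billiard dynamics gives the second integral locally, but near the "diameter" orbits ($\alpha=\pi$) and near the boundary $\alpha\to0$ or $\alpha\to2\pi$ one must check the coordinates degenerate in exactly the toric way (i.e.\ the moment polytope behaves correctly along the two axes), and one must verify the curve \eqref{eq:param} really is the boundary, including that $\Omega_0$ is concave (so that $X_{\Omega_0}$ is a \emph{concave} toric domain, as the abstract claims) — this requires checking the sign of the curvature of \eqref{eq:param} throughout $\alpha\in[0,2\pi]$. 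A secondary technical point is matching orientations and the factor $\pi$ in the definition of $X_\Omega$ with the normalizations coming from $\omega=\sum dp_i\wedge dq_i$, which I would handle by computing the total area $\mu_1+\mu_2$ along the boundary and checking it equals the area enclosed by the billiard chord plus its complement, namely $\pi$ (the area of the unit disk) appropriately scaled.
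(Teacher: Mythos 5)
Your heuristic is the right one and in fact coincides with the informal discussion in \S2.2 of the paper: the curve \eqref{eq:param} really does arise from the billiard on the round disk, with $v=\q\times\p=\cos(\alpha/2)$ as the angular momentum, the coordinates $(s,\psi)$ as angle variables, and $\rho_1,\rho_2$ as the corresponding actions. However, as a proof the proposal has a genuine gap precisely at the point you flag as the ``main obstacle,'' and that gap is not a technicality to be checked but the actual content of the theorem.

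The difficulty is that the billiard dynamics does not live on $\interior(P_L)$. The Hamiltonian $H_0=\tfrac12|\p|^2$ generates straight-line motion $\dot\q=\p$, $\dot\p=0$, whose trajectories exit $\{|\q|<1\}$ in finite time; the reflection law that would make them recur is a feature of the non-smooth boundary $\partial P_L$, which is a union of two solid tori meeting along a torus where the characteristic vector field jumps discontinuously. So there is no ``second commuting circle action'' on $\interior(P_L)$ to write down directly — the integrability of the billiard map does not, by itself, equip the interior of the lagrangian product with a smooth, effective, complete Hamiltonian $T^2$-action with proper moment map, which is exactly what your step (4) requires as input. The paper's proof spends essentially all of \S2.3--\S2.5 manufacturing this missing structure: it replaces $P_L$ by an exhausting family of smooth strictly convex hypersurfaces $\partial P_\epsilon=\{|\p|^2+\epsilon U(|\q|^2)=1\}$, constructs genuine action-angle coordinates $(v,s,\psi)$ on each $\partial P_\epsilon\setminus(C_+\cup C_-)$, proves (the long and delicate Lemma \ref{lem:Phi}) that the resulting map $\widehat\Phi$ extends smoothly across the two exceptional Reeb circles $C_\pm$ so that one gets a strict contactomorphism $\partial P_\epsilon\to\partial X_{\Omega_\epsilon}$, promotes this to a symplectomorphism of $\R^4$ by the Gromov--McDuff theorem, and only then recovers the billiard formula \eqref{eq:param} as the $\epsilon\to0$ limit of the parametrizations of $\partial\Omega_\epsilon$, assembling a symplectomorphism of the open domains by a nested-limit argument due to McDuff. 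None of these steps is routine, and your plan as written gives no substitute for any of them. If you want to push the Delzant-type route, you would still need an analogue of the $\epsilon$-regularization (or some other smoothing of $\partial P_L$) to even produce the $T^2$-action, after which you would face the same extension problem at $C_\pm$ — i.e. you would be rediscovering Lemma \ref{lem:Phi}. One small correction: the degeneration you need to worry about is only at $\alpha\to0$ and $\alpha\to2\pi$ (the circles $C_\pm$, where one action vanishes); the diameter orbits $\alpha=\pi$ correspond to an interior smooth point of $\partial\Omega_0$ and cause only a coordinate nuisance ($\q$ passing through $0$), not a genuine degeneration of the toric structure.
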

\begin{rmk}
The curve \eqref{eq:param} has some nice properties. For example, if we switch $\alpha$ by $2\pi -\alpha$, we deduce that this curve is symmetric with respect to the reflection about the line $y=x$. We also observe that $\frac{y'(\alpha)}{x'(\alpha)}=-\frac{2\pi-\alpha}{\alpha}.$
\end{rmk}

Two kinds of toric domains are of particular interest. Let $\Omega$ be the domain in the first quadrant of $\R^2$ bounded by the coordinate axes and a curve which is the union of the graph of a piecewise smooth non-increasing function $f:[0,a]\to [0,\infty)$ and the line segment $L$ connecting $(a,0)$ and $(a,f(a))$. We always assume that $f(0)>0$. If $f(a)=0$, we take $L=\emptyset$. We say that $X_{\Omega}$ is convex\footnote{This definition of convex toric domains is slightly less general than that given in \cite{dan2}, but it suffices for all of our applications.} is $f$ is a concave function, and that $X_{\Omega}$ is concave if $f$ is convex function and $L=\emptyset$, see Figure \ref{fig:tor}(a,b). We observe that ellipsoids and symplectic polydisks are convex toric domains and that ellipsoids are the only toric domains that are both convex and concave. Moreover the toric domain $X_0$ defined in Theorem \ref{thm:symp} is concave, see Figure \ref{fig:tor}(c). In \cite{dan2}, Cristofaro-Gardiner proved the following theorem.
\begin{thm}[Cristofaro-Gardiner]\label{thm:cc}
Let $X_{\Omega}$ and $X_{\Omega'}$ be concave and convex toric domains, respectively. Then ECH capacities give a sharp obstruction for embedding $\interior(X_{\Omega})$ into $X_{\Omega'}$.
\end{thm}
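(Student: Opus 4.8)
The easy half of the statement is property (ii) above: ECH capacities are monotone under symplectic embeddings, so if $\interior(X_{\Omega})\hookrightarrow X_{\Omega'}$ then $c_k(\interior(X_{\Omega}))\le c_k(X_{\Omega'})$ for every $k$. All the content is in the converse, and the plan is to reduce the embedding problem on both sides to a ball-packing problem. First I would normalize the situation: using the continuity of the embedding capacity and exhausting $\interior(X_{\Omega})$ by compact pieces, it suffices to treat the case in which the convex boundary arc of $\Omega$ is a finite union of line segments of rational slope, so that $X_{\Omega}$ carries a finite \emph{weight expansion} $(a_1,\dots,a_N)$ read off from the Farey / continued-fraction subdivision of $\Omega$ into triangles.

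The first main step is the weight decomposition of the concave source. I would prove that for any four-dimensional symplectic manifold $Z$ one has $\interior(X_{\Omega})\hookrightarrow Z$ if and only if $\coprod_{i=1}^N \interior(B(a_i))\hookrightarrow Z$, generalizing McDuff's ball-decomposition theorem for ellipsoids. The forward construction is explicit in the toric picture: subdividing the moment region $\Omega$ into the triangles of the weight expansion produces a symplectic embedding of the disjoint union of balls into $\interior(X_{\Omega})$ filling it up to a set of measure zero, and the reverse implication follows from a limiting argument showing that once all the balls embed into $Z$ so does the whole domain. Combined with the disjoint-union formula \eqref{eq:disj}, this also gives $c_k(\interior(X_{\Omega}))=c_k\big(\coprod_i B(a_i)\big)$, so the hypothesis of the theorem becomes $c_k\big(\coprod_i B(a_i)\big)\le c_k(X_{\Omega'})$ for all $k$.

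The second main step is to show that this system of inequalities is exactly the condition for $\coprod_{i=1}^N B(a_i)\hookrightarrow X_{\Omega'}$ when $X_{\Omega'}$ is convex. Here I would use Hutchings's combinatorial formula expressing $c_k(X_{\Omega'})$ as a maximum over convex lattice paths in $\Omega'$, match it against the weight combinatorics of the balls, and then invoke the solution of the ball-packing problem into convex toric domains: the only obstruction to packing disjoint balls into such a domain is the ECH one. This last fact rests on McDuff--Polterovich/Biran-type inflation techniques and the non-vanishing of the relevant Gromov--Taubes invariants, together with the already known sharp cases of balls into balls, ellipsoids and polydisks.

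I expect the third step to be the main obstacle: the weight decomposition in the first step is intricate but essentially combinatorial and soft, whereas proving that ECH capacities are the \emph{sole} obstruction to packing balls into an arbitrary convex toric domain is where the hard symplectic input enters, and one must take care to handle the general (non-ellipsoidal) shape of $\Omega'$ and to keep the lattice-path bookkeeping consistent with the weight expansion. Along the way I would also need the technical continuity statements that justify the initial reduction to rational $\Omega$ and the passage between open and closed domains.
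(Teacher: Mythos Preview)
The paper does not prove this theorem at all: Theorem~\ref{thm:cc} is quoted verbatim as a result of Cristofaro-Gardiner, with the citation \cite{dan2} and no argument given. So there is nothing in the paper to compare your proposal against; the author simply imports the statement as a black box and combines it with Theorem~\ref{thm:symp} and Proposition~\ref{prop:emb2} to deduce Theorem~\ref{thm:emb}.

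That said, as a sketch of Cristofaro-Gardiner's actual argument your outline is in the right spirit but diverges at the second main step. You propose to pack the balls $\coprod_i B(a_i)$ directly into the convex target $X_{\Omega'}$ and to show that ECH capacities are the only obstruction to this ball-packing, via lattice-path combinatorics and inflation. The proof in \cite{dan2} instead applies a second weight expansion on the \emph{convex} side: one writes $X_{\Omega'}$ as a large ball with a finite collection of balls removed (again via a Farey-type subdivision of the complement of $\Omega'$ in a big triangle), so that the embedding question becomes a ball-packing problem of $\coprod_i B(a_i)\sqcup\coprod_j B(b_j)$ into a single ball. At that point one invokes McDuff's theorem that ECH capacities are sharp for packing balls into a ball, together with the combinatorial identification of ECH capacities with the weight data on both sides. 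Your route would require a separate ``ECH is sharp for balls into general convex toric domains'' input, which is not independently available; the actual proof avoids this by reducing both source and target to balls.
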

We observe that the first claim of Theorem \ref{thm:emb} follows immediately from Theorems \ref{thm:symp} and \ref{thm:cc}. 

\begin{figure}
\centering
\begin{subfigure}[b]{0.3\textwidth}
\centering
\def\svgwidth{0.7\textwidth}
\begingroup%
  \makeatletter%
  \providecommand\color[2][]{%
    \errmessage{(Inkscape) Color is used for the text in Inkscape, but the package 'color.sty' is not loaded}%
    \renewcommand\color[2][]{}%
  }%
  \providecommand\transparent[1]{%
    \errmessage{(Inkscape) Transparency is used (non-zero) for the text in Inkscape, but the package 'transparent.sty' is not loaded}%
    \renewcommand\transparent[1]{}%
  }%
  \providecommand\rotatebox[2]{#2}%
  \ifx\svgwidth\undefined%
    \setlength{\unitlength}{120bp}%
    \ifx\svgscale\undefined%
      \relax%
    \else%
      \setlength{\unitlength}{\unitlength * \real{\svgscale}}%
    \fi%
  \else%
    \setlength{\unitlength}{\svgwidth}%
  \fi%
  \global\let\svgwidth\undefined%
  \global\let\svgscale\undefined%
  \makeatother%
  \begin{picture}(1,1)%
    \put(0,0){\includegraphics[width=\unitlength]{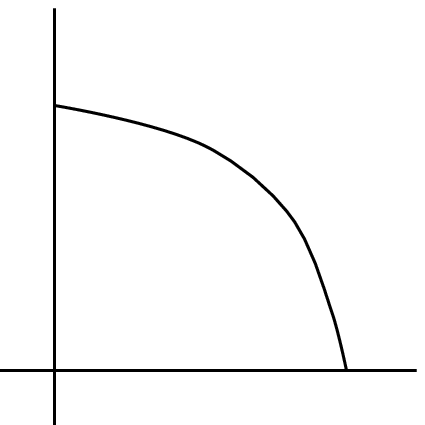}}%
    \put(0.21199407,0.2691331){\color[rgb]{0,0,0}\makebox(0,0)[lb]{\smash{$\Omega$}}}%
  \end{picture}%
\endgroup%
\caption{A convex toric domain}
\label{fig:conv}
\end{subfigure}
\quad
\begin{subfigure}[b]{0.3\textwidth}
\centering
\def\svgwidth{0.7\textwidth}
\begingroup%
  \makeatletter%
  \providecommand\color[2][]{%
    \errmessage{(Inkscape) Color is used for the text in Inkscape, but the package 'color.sty' is not loaded}%
    \renewcommand\color[2][]{}%
  }%
  \providecommand\transparent[1]{%
    \errmessage{(Inkscape) Transparency is used (non-zero) for the text in Inkscape, but the package 'transparent.sty' is not loaded}%
    \renewcommand\transparent[1]{}%
  }%
  \providecommand\rotatebox[2]{#2}%
  \ifx\svgwidth\undefined%
    \setlength{\unitlength}{120bp}%
    \ifx\svgscale\undefined%
      \relax%
    \else%
      \setlength{\unitlength}{\unitlength * \real{\svgscale}}%
    \fi%
  \else%
    \setlength{\unitlength}{\svgwidth}%
  \fi%
  \global\let\svgwidth\undefined%
  \global\let\svgscale\undefined%
  \makeatother%
  \begin{picture}(1,1)%
    \put(0,0){\includegraphics[width=\unitlength]{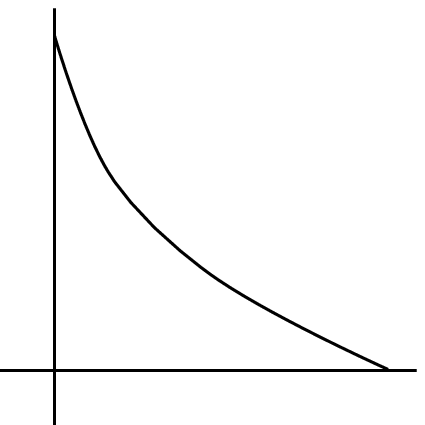}}%
    \put(0.21199407,0.2691331){\color[rgb]{0,0,0}\makebox(0,0)[lb]{\smash{$\Omega$}}}%
  \end{picture}%
\endgroup%
\caption{A concave toric domain}
\label{fig:conc}
\end{subfigure}
\quad
\begin{subfigure}[b]{0.3\textwidth}
\centering
\def\svgwidth{0.7\textwidth}
\begingroup%
  \makeatletter%
  \providecommand\color[2][]{%
    \errmessage{(Inkscape) Color is used for the text in Inkscape, but the package 'color.sty' is not loaded}%
    \renewcommand\color[2][]{}%
  }%
  \providecommand\transparent[1]{%
    \errmessage{(Inkscape) Transparency is used (non-zero) for the text in Inkscape, but the package 'transparent.sty' is not loaded}%
    \renewcommand\transparent[1]{}%
  }%
  \providecommand\rotatebox[2]{#2}%
  \ifx\svgwidth\undefined%
    \setlength{\unitlength}{164.38573303bp}%
    \ifx\svgscale\undefined%
      \relax%
    \else%
      \setlength{\unitlength}{\unitlength * \real{\svgscale}}%
    \fi%
  \else%
    \setlength{\unitlength}{\svgwidth}%
  \fi%
  \global\let\svgwidth\undefined%
  \global\let\svgscale\undefined%
  \makeatother%
  \begin{picture}(1,0.82722201)%
    \put(0,0){\includegraphics[width=\unitlength]{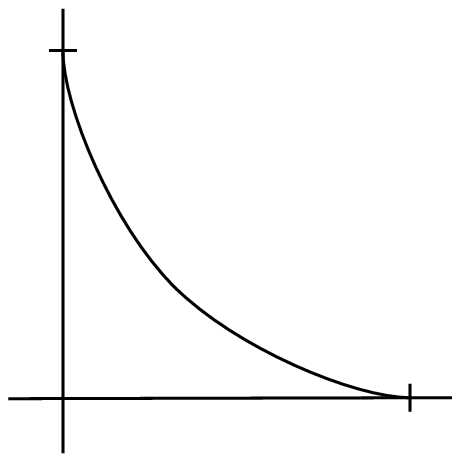}}%
    \put(0.20265663,0.26762511){\color[rgb]{0,0,0}\makebox(0,0)[lb]{\smash{$\Omega_0$}}}%
    \put(-0.06,0.87){\color[rgb]{0,0,0}\makebox(0,0)[lb]{\smash{$2\pi$}}}%
    \put(0.82,0.01){\color[rgb]{0,0,0}\makebox(0,0)[lb]{\smash{$2\pi$}}}%
  \end{picture}%
\endgroup%
\label{fig:om}
\caption{The region $\Omega_0$}
\end{subfigure}
\caption{Toric domains}
\label{fig:tor}
\end{figure}

\subsection{The boundary of $P_L$ and billiards}
The idea of the proof of Theorem \ref{thm:symp} is to put an appropriate Hamiltonian toric action on $\interior(P_L)$ and to compute the image of its moment map. We will now give a description of this action.

We would like first to define a toric action on $\partial P_L$ and then to extend it to all of $P_L$. We cannot do that because $\partial P_L$ is not smooth. But we can still get an idea of the actual definition which will be given in \S\ref{sub:def} by looking at $\partial P_L$. The 3-manifold $\partial P_L$ is a union of two solid tori $D^2_{\qq}\times S^1_{\pp}\cup S^1_{\qq}\times D^2_{\pp}$. The characteristic flow is generated by the vector field $V$ defined by:
\[V_\xx=\left\{\begin{aligned}p_i\sum_i\frac{\partial}{\partial q_i}, &\quad\text{if }\xx\in\interior(D^2_{\qq}\times S^1_{\pp}),\\
-q_i\sum_i\frac{\partial}{\partial p_i}, &\quad\text{if }\xx\in\interior(S^1_{\qq}\times D^2_{\pp}). \end{aligned}\right.\]
Note that we cannot extend $V$ continuously on $S^1_\qq\times S^1_\pp$. Even still, $V$ generates a continuous flow on $\partial P_L$ so that each time we hit the torus $S^1_\qq\times S^1_\pp$, we go from the interior of a solid torus to another, and so that there are two orbits contained in $S^1_\qq\times S^1_\pp$ which rotate along $S^1_\qq$ and $S^1_\pp$ with the same speed in the clockwise and counter-clockwise directions. We observe that if we look at the trajectory on $D^2_{\qq}\times S^1_\pp$ and project it to $D^2_\qq$, we obtain a billiard trajectory, as defined in \S\ref{sub:bil}, see Figure \ref{fig:bil}. We refer the reader to \cite{albmaz} and \cite{artost} for more details.

As we will see in \S\ref{sub:bil}, we can define a toric action on the set of points belonging to a billiard trajectory in $D^2_\qq$. Given such a point which is not on the two trajectories contained in $\partial D^2_\qq$, we would like to define two circle actions as follows. The first one is given by rotating $\qq$ and the corresponding $\pp$ by the same angle. The other one is given by moving along the billiard trajectory and rotating back by an angle whose proportion to the total angle spanned by the line segment is equal to the amount moved on it. These two actions correspond to translations in the toric coordinates $\varphi_2$ and $\varphi_1$, respectively, which will be defined in \S\ref{sub:bil}. 

\begin{figure}
\centering

\def\svgwidth{0.6\textwidth}
\begingroup%
  \makeatletter%
  \providecommand\color[2][]{%
    \errmessage{(Inkscape) Color is used for the text in Inkscape, but the package 'color.sty' is not loaded}%
    \renewcommand\color[2][]{}%
  }%
  \providecommand\transparent[1]{%
    \errmessage{(Inkscape) Transparency is used (non-zero) for the text in Inkscape, but the package 'transparent.sty' is not loaded}%
    \renewcommand\transparent[1]{}%
  }%
  \providecommand\rotatebox[2]{#2}%
  \ifx\svgwidth\undefined%
    \setlength{\unitlength}{291.68007813bp}%
    \ifx\svgscale\undefined%
      \relax%
    \else%
      \setlength{\unitlength}{\unitlength * \real{\svgscale}}%
    \fi%
  \else%
    \setlength{\unitlength}{\svgwidth}%
  \fi%
  \global\let\svgwidth\undefined%
  \global\let\svgscale\undefined%
  \makeatother%
  \begin{picture}(1,0.33220658)%
    \put(0,0){\includegraphics[width=\unitlength]{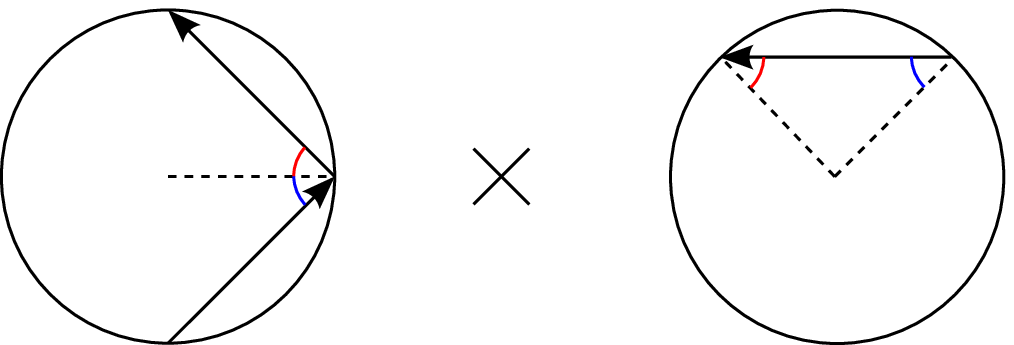}}%
    \put(0.24547443,0.13065101){\color[rgb]{0,0,0}\makebox(0,0)[lb]{\smash{$\color{blue}\beta$}}}%
    \put(0.24821716,0.19373382){\color[rgb]{0,0,0}\makebox(0,0)[lb]{\smash{$\color{red}\gamma$}}}%
    \put(0.86258891,0.25133117){\color[rgb]{0,0,0}\makebox(0,0)[lb]{\smash{$\color{blue}\beta$}}}%
    \put(0.76522196,0.25338822){\color[rgb]{0,0,0}\makebox(0,0)[lb]{\smash{$\color{red}\gamma$}}}%
    \put(0.20364778,0.07785343){\color[rgb]{0,0,0}\makebox(0,0)[lb]{\smash{1}}}%
    \put(0.20707619,0.23487478){\color[rgb]{0,0,0}\makebox(0,0)[lb]{\smash{3}}}%
    \put(0.81870521,0.25681663){\color[rgb]{0,0,0}\makebox(0,0)[lb]{\smash{2}}}%
    \put(0.30307178,0.0154563){\color[rgb]{0,0,0}\makebox(0,0)[lb]{\smash{$D^2_\qq$}}}%
    \put(0.64591316,0.0154563){\color[rgb]{0,0,0}\makebox(0,0)[lb]{\smash{$D^2_\pp$}}}%
  \end{picture}%
\endgroup%
\caption{The segments representing a billiard trajectory}
\label{fig:bil}
\end{figure}

\subsection{Ball packings and ECH capacities}\label{sec:pack}

ECH capacities of a concave toric domain can be computed using an appropiate ball packing, as explained in \cite{ccfhr}. We now recall this construction and compute the first two ECH capacities of $X_0$.

Let $X_\Omega$ be a concave toric domain. The {\em weight expansion} of $\Omega$ is the multiset $w(\Omega)$ defined as follows. For a triangle $T$ with vertices $(0,0)$, $(a,0)$ and $(0,a)$, we define $w(T)=\{a\}$. If $\Omega=\emptyset$, we let $w(\Omega)=\emptyset$. Let $T_1$ be the largest triangle contained in $\Omega$. Then $\Omega\setminus T_1=\Omega_1'\sqcup\Omega_2'$, where $\Omega_1'$ and $\Omega_2'$ could be empty. We translate the closures of $\Omega_1'$ and $\Omega_2'$ so that the obtuse corners are at the origin and we multiply these regions by the matrices $\left[\begin{array}{cc}1&1\\0&1\end{array}\right]$ and $\left[\begin{array}{cc}1 &0\\1&1\end{array}\right]$, respectively, obtaining two regions that we call  $\Omega_1$ and $\Omega_2$, respectively. In particular, $X_{\Omega_1}$ and $X_{\Omega_2}$ are also concave toric domains. Assuming that $w(\Omega_1)$ and $w(\Omega_2)$ are defined, we let \[w(\Omega):=w(T_1)\cup w(\Omega_1)\cup w(\Omega_2).\]
Here we consider the union with repetition. We now proceed by induction to define $w(\Omega_1)$ and $w(\Omega_2)$ in terms of triangles and smaller regions. This process is infinite, unless $\Omega$ is bounded by the graph of a piecewise linear function whose slopes are all rational. 

We now define the {\em weight sequence} $w_1\ge w_2 \ge \dots$ to be the non-increasing ordering of the elements of $w(\Omega)$.
As explained in \cite{ccfhr} and reviewed in \S\ref{sec:emb}, for every $\epsilon>0$, there exists a symplectic embedding \begin{equation}\label{eq:pack}\coprod_{i=1}^{\infty} B(w_i)\hookrightarrow (1+\epsilon)X_\Omega.\end{equation} Therefore for every $k\in\mathbb{N}$,
\begin{equation}\label{eq:lim}\lim_{l\to\infty}c_k\left(\coprod_{i=1}^{l} B(w_i)\right)\le c_k(X_\Omega).\end{equation}
\begin{rmk}
Since $(w_i)_{i\ge 1}$ is a non-increasing sequence, it follows from \eqref{eq:disj} that
\[c_k\left(\coprod_{i=1}^{l} B(w_i)\right)=c_k\left(\coprod_{i=1}^{k} B(w_i)\right),\qquad \text{for }l\ge k.\]
Therefore the limit in \eqref{eq:lim} equals $c_k\left(\coprod_{i=1}^{k} B(w_i)\right)$.
\end{rmk}
The main theorem of \cite{ccfhr} is the following.
\begin{thm}[Choi, Cristofaro-Gardiner, Frenkel, Hutchings, Ramos \cite{ccfhr}]\label{thm:ccfhr}
Let $X_{\Omega}$ be a concave toric domain and let $w_1\ge w_2\ge w_3\ge \dots$ be the weight sequence of $\Omega$. Then for every $k\in\mathbb{N}$,
\begin{equation}
c_k\left(\coprod_{i=1}^{k} B(w_i)\right)=c_k(X_\Omega).\label{eq:ccfhr}
\end{equation}
\end{thm}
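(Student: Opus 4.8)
By \eqref{eq:lim} and the remark following it we already have $c_k\big(\coprod_{i=1}^{k}B(w_i)\big)\le c_k(X_\Omega)$, so the whole content of the theorem is the reverse inequality $c_k(X_\Omega)\le c_k\big(\coprod_{i=1}^{k}B(w_i)\big)$. The plan is to obtain it by matching an upper bound for $c_k(X_\Omega)$ coming from embedded contact homology with the right-hand side, which \eqref{eq:disj} makes completely explicit. I would first reduce to the case in which $\Omega$ is bounded by the graph of a piecewise linear function with rational slopes, so that the weight expansion of \S\ref{sec:pack} terminates and $w(\Omega)=\{w_1,\dots,w_N\}$ is finite: any concave $\Omega$ can be sandwiched between rational polygonal concave domains $\Omega^-\subseteq\Omega\subseteq\Omega^+$ with $\Omega^+\subseteq(1+\epsilon)\Omega^-$ after a translation, so the scaling and monotonicity properties (i)--(ii) pinch $c_k(X_\Omega)$ between $c_k(X_{\Omega^-})$ and $(1+\epsilon)c_k(X_{\Omega^-})$, while a direct look at the algorithm in \S\ref{sec:pack} shows that $c_k\big(\coprod_{i=1}^{k}B(w_i)\big)$ depends continuously on $\Omega$; letting $\epsilon\to0$ transports the rational case to the general one. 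So assume $\Omega$ is rational polygonal.

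The embedded contact homology input is a combinatorial description of the ECH capacities of a concave toric domain. After perturbing $\partial X_\Omega$ to a smooth nondegenerate contact hypersurface, its ECH is generated by \emph{convex generators}: convex lattice paths $\Lambda$ in the first quadrant from the $y$-axis to the $x$-axis, each edge labelled elliptic or hyperbolic, each carrying an ECH index $I(\Lambda)$ determined by the labels and by the number of lattice points in the region cut off by $\Lambda$ together with the two axes, and a symplectic action $\mathcal A_\Omega(\Lambda)$ equal to the $\Omega$-weighted length of $\Lambda$ (a sum over the edges, each measured by the support function of $\Omega$). Studying the $U$-map on this complex and comparing with ellipsoids and balls via exact symplectic cobordism maps, one arrives at
\[
c_k(X_\Omega)=\min\big\{\mathcal A_\Omega(\Lambda)\ :\ \Lambda\text{ an admissible convex generator with }I(\Lambda)=2k\big\}.
\]
In view of the inequality already in hand, the only part of this we genuinely need is ``$\le$'': the existence of \emph{some} admissible convex generator $\Lambda$ of index $2k$ representing a class on which $U^k$ is the point class, so that $c_k(X_\Omega)\le\mathcal A_\Omega(\Lambda)$.

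It therefore suffices to produce, for each $k$, an admissible convex generator $\Lambda$ of index $2k$ with
\[
\mathcal A_\Omega(\Lambda)=\max\Big\{\textstyle\sum_{i=1}^{N}d_iw_i\ :\ d_i\in\Z_{\ge0},\ \sum_{i=1}^{N}\tfrac{d_i(d_i+1)}{2}\le k\Big\},
\]
since by \eqref{eq:disj} and the formula $c_j(B(a))=a\cdot\max\{d\in\Z_{\ge0}:d(d+1)/2\le j\}$ for the ECH capacities of a ball, the right-hand side equals $c_k\big(\coprod_{i=1}^{N}B(w_i)\big)=c_k\big(\coprod_{i=1}^{k}B(w_i)\big)$. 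I would build such a $\Lambda$ by induction along the weight expansion. The base case is $\Omega$ the triangle with legs of length $a$, where $X_\Omega=B(a)$, $w(\Omega)=\{a\}$, and the standard generator does the job, both sides reducing to the same elementary count. For the inductive step, write $\Omega\setminus T_1=\Omega_1'\sqcup\Omega_2'$ and pass to the $\operatorname{SL}_2(\Z)$-translates $\Omega_1,\Omega_2$ as in \S\ref{sec:pack}; because those unimodular maps fix the integer lattice and transform $\Omega$-weighted lengths covariantly, concatenating the optimal generators for $B(a)$, $X_{\Omega_1}$ and $X_{\Omega_2}$ along the edge cutting off $T_1$ yields a convex generator for $X_\Omega$ whose index and action are the sums of those of the three pieces---the weight expansion being arranged precisely so that the lattice-point counts assemble correctly. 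Choosing the split $k=k_0+k_1+k_2$ that realizes the maximum in \eqref{eq:disj} applied to $\coprod_{i=1}^{N}B(w_i)=B(a)\sqcup\coprod B(w(\Omega_1))\sqcup\coprod B(w(\Omega_2))$, which comes from $w(\Omega)=\{a\}\cup w(\Omega_1)\cup w(\Omega_2)$, and invoking the inductive hypothesis for $X_{\Omega_1}$ and $X_{\Omega_2}$, yields the desired $\Lambda$. Combined with the ECH estimate above this gives $c_k(X_\Omega)\le c_k\big(\coprod_{i=1}^{k}B(w_i)\big)$, completing the proof.

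The genuinely hard step is the embedded contact homology input: matching the perturbed chain complex of $\partial X_\Omega$ with the lattice-path model, computing the ECH index, and---the delicate point---showing that the convex generator constructed above represents a class on which $U^k$ is the point class, bearing in mind that $\partial X_\Omega$ is neither smooth nor nondegenerate, so that one must run a careful perturbation-and-limit argument and lean on the cobordism maps to ellipsoids to keep control of the homology. By comparison, the combinatorial induction is only bookkeeping, once the degenerate cases are disposed of: an empty $\Omega_i'$, edges lying along the coordinate axes, and, in the approximation step, generators that touch the axes.
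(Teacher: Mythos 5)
This statement is imported: the paper you were given does not prove Theorem~\ref{thm:ccfhr} at all. It cites it as ``the main theorem of \cite{ccfhr}'' and uses it as a black box; the only thing the paper itself establishes in the surrounding text is the easy inequality $c_k\big(\coprod_{i\le k} B(w_i)\big)\le c_k(X_\Omega)$, via the ball packing \eqref{eq:pack} and the monotonicity and disjoint-union axioms for ECH capacities. So there is no ``paper's own proof'' to compare with; you are being asked to reproduce a substantial external result.

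With that understood: your outline goes in a plausible direction but has several gaps that go beyond bookkeeping. First, the central claim $c_k(X_\Omega)=\min\{\mathcal A_\Omega(\Lambda) : I(\Lambda)=2k\}$ is not a consequence of anything stated here and is the real theorem; ECH capacities are a min-max over filtered homology classes mapping to $U^{-k}$ of the empty-set class, and converting that into a minimum of actions over chain-level generators of a fixed index requires both a computation of the full chain complex of $\partial X_\Omega$ (which is only piecewise smooth, hence needs a perturbation-and-limit argument) and an identification of which cycles survive to the right homology class. You acknowledge this but treat it as a footnote; it is the theorem. Second, for a \emph{concave} toric domain the relevant combinatorial generators are concave lattice paths (the convex-generator formalism you invoke is the one used for convex toric domains, e.g.\ in \cite{dan2}); the index and action conventions are not identical, and this matters for the induction. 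Third, the inductive step --- ``concatenating the optimal generators for $B(a)$, $X_{\Omega_1}$, $X_{\Omega_2}$ along the edge cutting off $T_1$ yields a generator whose index and action are the sums'' --- is asserted but not verified; the ECH index involves lattice-point counts of the region enclosed by the path, and additivity under concatenation after applying the shears $\left[\begin{smallmatrix}1&1\\0&1\end{smallmatrix}\right]^{\pm1}$, $\left[\begin{smallmatrix}1&0\\1&1\end{smallmatrix}\right]^{\pm1}$ is precisely the combinatorial heart of \cite{ccfhr} and deserves a careful proof, not a parenthetical. Finally, your approximation step needs a word of care: as $\Omega$ varies the weight \emph{multiset} changes discontinuously (small triangles appear and disappear), and the continuity of $c_k\big(\coprod_{i\le k} B(w_i)\big)$ in $\Omega$ has to be argued from the fact that $c_k$ only sees the $k$ largest weights together with the scaling axiom. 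None of this is fatal to the strategy, but as written the sketch assumes the two most substantial inputs --- the chain-complex computation and the index-action additivity under the weight-expansion surgery --- rather than proving them.
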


In \S\ref{sec:emb}, we will show that the first two weights of $\Omega_0$ are:
\begin{equation}\label{eq:w}w_1=4,\quad w_2=3\sqrt{3}-4.\end{equation}
From \eqref{eq:ccfhr} and \eqref{eq:w} it follows that:
\begin{equation}\label{eq:ckx}
c_1(X_0)=4,\quad c_2(X_0)=3\sqrt{3}.
\end{equation}
We note that \eqref{eq:ckx} is enough to obtain the 'only if' parts of Theorem \ref{thm:emb} as we explain below.

\subsection{Proof of Theorem \ref{thm:emb}}\label{sec:thm}

The last ingredient of the proof of Theorem \ref{thm:emb} is the following proposition.

\begin{prop}\label{prop:emb2}
There exists a symplectic embedding $\interior(X_0)\hookrightarrow E(4,3\sqrt{3})$.
\end{prop}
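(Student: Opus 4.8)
The plan is to reduce the desired embedding to a comparison of ECH capacities and then to establish that comparison via the weight expansion of $\Omega_0$. Since $X_0$ is a concave toric domain and $E(4,3\sqrt3)$ is an ellipsoid, hence a convex toric domain, Theorem~\ref{thm:cc} applies: ECH capacities give a sharp obstruction to $\interior(X_0)\hookrightarrow E(4,3\sqrt3)$, so it suffices to prove
\[
c_k(X_0)\le c_k\big(E(4,3\sqrt3)\big)\qquad\text{for every }k\ge1 .
\]
The right-hand side is explicit: $c_k(E(a,b))$ is the $(k+1)$-st smallest element, with multiplicity, of $\{\,ma+nb\mid m,n\in\Z_{\ge0}\,\}$, so for $(a,b)=(4,3\sqrt3)$ it is the nondecreasing rearrangement of $\{\,4m+3\sqrt3\,n\,\}$, beginning $0,4,3\sqrt3,8,4+3\sqrt3,\dots$. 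In particular $c_1=4$ and $c_2=3\sqrt3$, matching \eqref{eq:ckx} --- which is precisely what singles out $E(4,3\sqrt3)$ as the candidate target.

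For the left-hand side I would use Theorem~\ref{thm:ccfhr}: if $w_1\ge w_2\ge\cdots$ is the weight sequence of $\Omega_0$, then $c_k(X_0)=c_k\big(\coprod_{i=1}^k B(w_i)\big)$, computed by the maximum in \eqref{eq:disj}, so the problem becomes one of controlling the weight expansion of $\Omega_0$. The reflection symmetry of \eqref{eq:param} about $y=x$ is the structural engine: the largest inscribed triangle $T_1\subset\Omega_0$ is $\{x+y\le 4,\ x,y\ge 0\}$, cut off by the line $x+y=4$ tangent to \eqref{eq:param} at $\alpha=\pi$ (the point $(2,2)$), so $w_1=4$; the two components of $\Omega_0\setminus T_1$ are interchanged by the reflection, hence --- using that swapping the two $\C$-factors is a symplectomorphism --- their normalizations $\Omega_1,\Omega_2$ have the same weight sequence, giving $w(\Omega_0)=\{4\}\cup w(\Omega_1)\cup w(\Omega_1)$ with each weight of $\Omega_1$ appearing twice. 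Iterating yields $w_2=3\sqrt3-4$, as recorded in \eqref{eq:w}, and in principle all $w_i$.

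The main obstacle is that the curve \eqref{eq:param} is \emph{transcendental}, so the $w_i$ satisfy no finite rational recursion and are not available in closed form: one must produce estimates accurate enough to force $c_k(X_0)\le c_k(E(4,3\sqrt3))$ simultaneously for all $k$. I would package this by choosing a rational polygonal region $\Omega'\supseteq\Omega_0$ --- for example the region bounded by the axes and finitely many secant segments of \eqref{eq:param}, nudged outward to rational vertices --- with $X_{\Omega'}$ still a concave toric domain. Then $X_0\subseteq X_{\Omega'}$ gives $c_k(X_0)\le c_k(X_{\Omega'})$ for free by property (ii), while the weight expansion of a rational polygon terminates, making $X_{\Omega'}$ comparable with $E(4,3\sqrt3)$ by a finite computation (equivalently, by checking $\interior(X_{\Omega'})\hookrightarrow E(4,3\sqrt3)$ via the combinatorics underlying Theorem~\ref{thm:cc} together with known ball-into-ellipsoid packings). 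The delicate part is that $\operatorname{vol}(X_0)=\pi^2$ is only slightly below $\operatorname{vol}(E(4,3\sqrt3))=6\sqrt3$ and $c_1,c_2$ already coincide, so $\Omega'$ must hug $\Omega_0$ tightly along the portion of $\partial\Omega_0$ realizing the small-index capacities; the slack is essentially confined to neighborhoods of the endpoints $(0,2\pi)$ and $(2\pi,0)$, and exhibiting an $\Omega'$ that exploits exactly this slack is the heart of the matter.
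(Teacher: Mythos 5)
Your reduction is exactly the paper's: by Theorem~\ref{thm:cc} it suffices to show $c_k(X_0)\le c_k(E(4,3\sqrt3))$ for all $k$, and by Theorem~\ref{thm:ccfhr} this is a question about the weight expansion of $\Omega_0$. You also correctly identify $w_1=4$, $w_2=w_3=3\sqrt3-4$, the reflection symmetry, and --- crucially --- that the comparison is tight at $k=1,2$ so there is very little room to maneuver. Up to that point you are in lockstep with the paper.

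The place where the proposal actually diverges, and where it has a genuine gap, is the proposed ``rational polygonal outer approximation $\Omega'\supseteq\Omega_0$.'' Taken literally (rational vertices), this cannot work: since $X_0\subset X_{\Omega'}$ and, for a concave toric domain, $c_1(X_{\Omega})=w_1(\Omega)$ and $c_2(X_{\Omega})=w_1(\Omega)+w_2(\Omega)$, the chain $c_k(X_0)\le c_k(X_{\Omega'})\le c_k(E(4,3\sqrt3))$ together with $c_1(X_0)=c_1(E)=4$ and $c_2(X_0)=c_2(E)=3\sqrt3$ forces $w_1(\Omega')=4$ and $w_2(\Omega')=3\sqrt3-4$, which is irrational and hence impossible for a polygon with rational vertices (its weights are all rational). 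Even if you relax to rational slopes with irrational vertices --- which does keep the weight expansion finite, as the paper notes --- you would have to make $\partial\Omega'$ osculate $\partial\Omega_0$ exactly at the tangent points $(2,2)$ and $(\sqrt3+\tfrac{2\pi}{3},\sqrt3-\tfrac{\pi}{3})$ (and its reflection), and you have not produced such an $\Omega'$ or carried out the resulting finite verification; you explicitly flag this as ``the heart of the matter'' and leave it open.

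The paper sidesteps this by a cleaner bookkeeping: it \emph{cancels} the shared large weights on both sides instead of approximating $\Omega_0$ globally. Using Lemma~\ref{lemma:sqcup}, one writes $c_k(X_0)$ as the capacity of $B(4)\sqcup B(3\sqrt3-4)^{\sqcup2}\sqcup(\text{small residual pieces})$ and $c_k(E(4,3\sqrt3))$ as the capacity of $B(4)\sqcup B(3\sqrt3-4)^{\sqcup2}\sqcup E(12-6\sqrt3,3\sqrt3-4)$, so by \eqref{eq:disj} it suffices to pack the residual pieces into $E(12-6\sqrt3,3\sqrt3-4)$. Only \emph{then} does one round the small residual regions up to rational-ish triangles and round the residual ellipsoid down to $T'(1.607,1.19)$; since the residual inequality is no longer tight, there is slack, and the packing can be exhibited explicitly (Figure~\ref{fig:final}). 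That cancellation step is the missing idea in your proposal: it converts a tight global comparison, where any outer approximation is obstructed by the equalities $c_1(X_0)=c_1(E)$ and $c_2(X_0)=c_2(E)$, into a slack residual packing problem amenable to hand computation.
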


Assuming Theorem \ref{thm:symp} and Proposition \ref{prop:emb2}, we can now prove Theorem \ref{thm:emb}.

\begin{proof}[Proof of Theorem \ref{thm:emb}]
By Theorem \ref{thm:symp}, we can substitute in Theorem \ref{thm:emb} $\interior(P_L)$ by the concave toric domain $\interior(X_0)$.
\newline

(a) First let us assume that $\interior(X_0)$ symplectically embeds into $B(c)$. Then
\[3\sqrt{3}=c_2(X_0)\le c_2(B(c))=c.\]
Conversely, $E(4,3\sqrt{3})\subset B(3\sqrt{3})\subset B(c)$, for all $c\ge 3\sqrt{3}$. So, by Proposition \ref{prop:emb2}, $\interior(X_0)$ symplectically embeds into $B(c)$ for all $c\ge 3\sqrt{3}$.\newline

(b) Assume that $\interior(X_0)\hookrightarrow E(a,b)$, where $a\le b$. We recall that $c_1(E(a,b))=a$ and $c_2(E(a,b))=\min(2a,b)$. Hence
\begin{align*}4&=c_1(X_0)\le c_1(E(a,b))=a,\\
3\sqrt{3}&=c_2(X_0)\le c_2(E(a,b))\le b.
\end{align*}
The converse is a direct consequence of Proposition \ref{prop:emb2}.\newline

(c) Assume that $\interior(X_0)\hookrightarrow P(a,b)$, where $a\le b$. Again we have \[4=c_1(X_0)\le c_1(P(a,b))=a.\] For the converse, we can construct an explicit embedding $\interior(P_L)\hookrightarrow P(4,4)$ by \[(p_1,q_1,p_2,q_2)\mapsto \left(\sqrt{\frac{2(p_1+1)}{\pi}}e^{i\pi(q_1+1)},\sqrt{\frac{2(p_2+1)}{\pi}}e^{i\pi(q_2+1)}\right).\] 

\end{proof}

\subsection{A converse question}

We may also ask a converse question, namely, when ellipsoids embed into the lagrangian bidisk. Although this question is still open in general, we can answer it in two cases.

\begin{cor}\label{thm:emb2}
Let $a\in\{1,2\}$. Then ECH capacities give a sharp obstruction to symplectically embedding $\interior(E(ab,b))$ into $\interior(P_L)$. In particular $\interior(E(ab,b))\hookrightarrow \interior(P_L)$ if, and only if, $\interior(E(ab,b))\subset \interior(X_0)$.
\end{cor}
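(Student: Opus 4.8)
The plan is to combine Theorem \ref{thm:symp}, which lets us replace $\interior(P_L)$ by the concave toric domain $\interior(X_0)$, with the sharpness result of Cristofaro-Gardiner for embeddings \emph{of} concave into convex toric domains. The subtlety is that here we want embeddings \emph{into} a concave domain, so Theorem \ref{thm:cc} does not apply directly. Instead, since an ellipsoid $E(ab,b)$ is simultaneously convex and concave, I would appeal to the ECH obstruction in one direction and construct an explicit embedding in the other. Concretely: if $\interior(E(ab,b))\hookrightarrow\interior(P_L)\cong\interior(X_0)$, then monotonicity of ECH capacities forces $c_k(E(ab,b))\le c_k(X_0)$ for all $k$, so it suffices to show that this sequence of inequalities already implies $\interior(E(ab,b))\subset\interior(X_0)$, and then that the inclusion itself gives the desired embedding (which is trivial, since an inclusion of domains is a symplectic embedding).

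So the heart of the matter is the numerical claim: for $a\in\{1,2\}$, the full list of inequalities $c_k(E(ab,b))\le c_k(X_0)$, $k\ge 1$, is equivalent to the containment $E(ab,b)\subseteq X_0$ (up to closures/interiors). For $a=1$ this is a ball $B(b)$, and for $a=2$ it is $E(2b,b)$. I would compute $c_k(E(ab,b))$ from the standard formula (for $E(1,a)$ the $k$-th capacity is the $(k+1)$-st smallest number in the matrix $\{m+na\}_{m,n\ge0}$, suitably scaled), and compute $c_k(X_0)$ via the weight expansion of $\Omega_0$ together with Theorem \ref{thm:ccfhr}: $c_k(X_0)=c_k\bigl(\coprod_{i=1}^k B(w_i)\bigr)$, where $w_1=4$, $w_2=3\sqrt3-4$, and the later weights come from iterating the weight-expansion algorithm on $\Omega_0$. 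The containment $E(ab,b)\subseteq X_0$ reduces, because $E(ab,b)$ is itself a toric domain with a straight-line or simple boundary, to a finite set of inequalities on the defining data of $\Omega_0$ (namely that the line/curve bounding $E(ab,b)$ lies below the curve \eqref{eq:param}); one checks these hold exactly when $b$ is at most the relevant threshold, and that the ECH inequalities, being implied by containment, are in fact \emph{equivalent} to it in these two cases by exhibiting, for each $b$ just past the threshold, a single index $k$ at which $c_k(E(ab,b))>c_k(X_0)$.

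The main obstacle I expect is this last equivalence: showing that no embedding exists beyond the containment threshold, i.e.\ that ECH capacities are genuinely sharp and not merely necessary. For $a=1$ the relevant comparison should be governed by $c_2$, where $c_2(B(b))=2b$ must be $\le c_2(X_0)=3\sqrt3$ — but wait, that would only give $b\le \tfrac{3\sqrt3}{2}$, whereas containment $B(b)\subseteq X_0$ is controlled by where the line $x+y=b$ meets the curve \eqref{eq:param}; so I would need to verify that the critical capacity index is chosen correctly and track \emph{all} $c_k$, using the weight-expansion structure of $\Omega_0$ to show that the binding constraint is precisely the one coming from containment. The case $a=2$ is analogous but requires handling $E(2b,b)$, whose weight expansion begins $\{b,b\}$, so one compares the combined ball packing $B(b)\sqcup B(b)\sqcup\cdots$ against that of $X_0$ term by term. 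In both cases the payoff is that the finitely many ``extra'' weights of $\Omega_0$ beyond $w_1,w_2$ never obstruct once the first couple of inequalities are satisfied, which is what makes the answer clean and reduces everything to the explicit geometric inclusion $\interior(E(ab,b))\subset\interior(X_0)$.
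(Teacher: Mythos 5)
Your overall framework is the right one: deduce from a hypothetical embedding $\interior(E(ab,b))\hookrightarrow\interior(P_L)\cong\interior(X_0)$ that all ECH capacities of the ellipsoid are dominated by those of $X_0$, show this already forces $\interior(E(ab,b))\subset\interior(X_0)$, and observe that inclusion is itself an embedding. The paper does exactly this. But your execution has a concrete error and leaves the crux of the argument unresolved, and as a result you propose a much heavier calculation than is needed.

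The error: you write $c_2(B(b))=2b$. In fact $c_2(B(b))=b$: the sorted sequence for a ball $B(a)$ is $0,a,a,2a,2a,2a,\dots$. (The paper itself uses $c_2(B(c))=c$ in the proof of Theorem~\ref{thm:emb}(a).) Starting from this wrong value you conclude that $c_2$ gives $b\le\frac{3\sqrt3}{2}$, flag that this does not match the containment threshold, and then punt on which index is actually binding. The correct identification is: for $a=1$ the binding capacity is $c_1$, giving $b=c_1(B(b))\le c_1(X_0)=4$; for $a=2$ it is $c_2$, giving $2b=c_2(E(2b,b))\le c_2(X_0)=3\sqrt3$. Both of these come straight from \eqref{eq:ckx}, so you never need to compute $c_k(X_0)$ or $c_k(E(ab,b))$ for higher $k$, and you never need the full weight sequence of $\Omega_0$.

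The other missing ingredient is a short geometric observation, in place of the ``track all $c_k$'' comparison you propose. Once you know $b\le4$ (for $a=1$) or $b\le\frac{3\sqrt3}{2}$ (for $a=2$), you must show $B(4)\subset X_0$, resp.\ $E(3\sqrt3,3\sqrt3/2)\subset X_0$. Both follow from tangency of a line to the boundary curve \eqref{eq:param}: using $\frac{y'(\alpha)}{x'(\alpha)}=-\frac{2\pi-\alpha}{\alpha}$, one checks that $x+y=4$ is tangent at $\alpha=\pi$ (the point $(2,2)$) and $x+2y=3\sqrt3$ is tangent at $\alpha=4\pi/3$; since $\Omega_0$ is the region below a convex graph, each tangent line lies under the curve, so the corresponding triangle $T(4)$, resp.\ $T(3\sqrt3,3\sqrt3/2)$, lies inside $\Omega_0$. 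That single capacity plus one tangent line closes the argument in each case; there is no need for a term-by-term weight-sequence comparison.
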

\begin{proof}
We assume that $\interior(E(ab,b))\hookrightarrow \interior(P_L)$. We consider first the case $a=1$. Then it follows from \eqref{eq:ckx} that $b=c_1(B(b))\le c_1(P_L)=c_1(X_0)=4$. So \[\interior(B(b))\subset \interior(B(4))\subset \interior(X_0).\] The second inclusion above follows from a simple calculation, see Figure \ref{fig:w}(a). Now suppose that $a=2$. From \eqref{eq:ckx} we obtain $2b=c_2(E(2b,b))\le c_2(X_0)=3\sqrt{3}$. So $\interior(E(2b,b))\subset \interior(E(3\sqrt{3},3\sqrt{3}/2))$. We also observe that the line $x+2y=3\sqrt{3}$ is tangent to the curve \eqref{eq:param} and hence $\interior(E(3\sqrt{3},3\sqrt{3}/2))\subset \interior(X_0)$. Therefore $\interior(E(2b,b))\subset\interior( X_0)$.
\end{proof}

\begin{rmk}
Corollary \ref{thm:emb2} does not hold for $a\ge 5$. In fact, one can construct better embeddings than the inclusion by symplectic folding, see \cite{schbook}. We do not know whether there are better embeddings than the inclusion for $a=3,4$.
\end{rmk}

\begin{rmk}
Gutt and Hutchings have recently announced a result that implies that $P(a,a)\hookrightarrow\interior(P_L)$ if, and only if, $a\le 2$.
\end{rmk}

\subsection{Outline of the paper}
The rest of this paper is organized as follows. In \S\ref{sec:conc}, we prove that $\interior(P_L)$ is symplectomorphic to the interior of a concave toric domain, namely $X_0$, thus proving Theorem \ref{thm:symp}. In \S\ref{sec:emb}, we prove Proposition \ref{prop:emb2}, that is, we show that $\interior(X_0)$ embeds into $E(4,3\sqrt{3})$. As explained in \S\ref{sec:thm}, this concludes the proof of Theorem \ref{thm:emb}.

\paragraph{Acknowledgments.}
I would like to thank Felix Schlenk for asking the question that inspired this paper and Michael Hutchings for helpful discussions. During the course of this work, I was supported by the European Research Council Grant Geodycon and a grant of the French region Pays de la Loire. I would also like to thank the anonymous referees for very helpful comments and suggestions.

\section{Concave toric domains and the lagrangian bidisk}\label{sec:conc}

\subsection{Outline}
In this section, we will prove Theorem \ref{thm:symp}. We recall that $P_L$ is the product of two lagrangian disks and $X_0$ is the concave toric domain $X_{\Omega_0}$ where $\Omega_0$ is the region bounded by the coordinate axes and the curve \eqref{eq:param}.
We will prove that $\interior(P_L)$ and $\interior(X_0)$ are symplectomorphic.

The idea is to exhaust $\interior(P_L)$ by domains $P_{\epsilon}$ which are endowed with a Hamiltonian toric action whose moment image converges to $\Omega_0$. In other words, for each $0<\epsilon<1$, we will construct symplectic manifolds $P_\epsilon\subset \interior(P_L)$ and toric domains $X_{\Omega_\epsilon}\subset \interior(X_0)$ such that for each $\epsilon$, the domains $P_\epsilon$ and $X_{\epsilon}$ are symplectomorphic and \[\bigcup_\epsilon P_\epsilon=\interior(P_L)\qquad\text{ and }\qquad\bigcup_\epsilon X_{\epsilon}=\interior(X_0).\]

The definition of $P_{\epsilon}$ is relatively simple and uses an idea from \cite{bengia}, which was also used in \cite{albmaz}. Let $U:[0,1)\to \mathbb{R}_+$ be a smooth function such that:
\begin{itemize}
\item $U(0)<1$,
\item $U^{(j)}(r)>0$, for $j=1,2$ and for all $r>0$,
\item $U(r)\to \infty$ as $r\to 1$.
\end{itemize}
To simplify the notation, we will denote a point in $\R^4$ by $(\qq,\pp)$, where $\qq=(q_1,q_2)$ and $\pp=(p_1,p_2)$ although the orientation of $\R^4$ is still given by $\omega\wedge \omega$.
For $0<\epsilon<1$, let $H_\epsilon(\qq,\pp)=\frac{1}{2}\left(|\pp|^2+\epsilon U(|\qq|^2)\right)$ and \[P_{\epsilon}=\big\{(\qq,\pp)\in P_L\;\big|\;H_{\epsilon}(\qq,\pp)\le \frac{1}{2}\big\}.\]
We observe that $P_\epsilon$ is a Liouville domain with Liouville form
\[\lambda=\frac{1}{2}\sum_{i=1}^2 (p_idq_i-q_idp_i).\]
We also note that for $\epsilon<\epsilon'$, we have $P_{\epsilon'}\subset \interior(P_{\epsilon})$, and that $\bigcup_{\epsilon} P_{\epsilon}=\interior(P_L)$.

The definition of $X_{\epsilon}$ is more complicated and it will be given in \S\ref{sub:ext}. The idea is as follows. We first define a function $v:\partial P_\epsilon \to [-M,M]$, for some $M\in\R$ such that $v^{-1}(\{-M,M\})$ is a Hopf link. Then we define toric coordinates $(\phi_1,\phi_2)$ in the complement of this Hopf link. Finally we show that $\phi_1$ and $\phi_2$ extend to the different components of this link and that there exist functions $\rho_1$ and $\rho_2$ defined on $\partial P_\epsilon$ such that
\[\lambda|_{\partial P_\epsilon}=\rho_1 \,d\phi_1+\rho_2\, d\phi_2.\] We define $X_\epsilon$ to be the toric domain $X_{\Omega_\epsilon}$ where $\Omega_\epsilon$ is the region bounded by the coordinate axes and the image of $(\rho_1,\rho_2)$.

\subsection{Billiards}\label{sub:bil}
We will now give an idea of how to define the toric coordinates on $\partial P_\epsilon$ by taking the limit $\epsilon\to 0$. 
We will explain the heuristics in this subsection and give the actual definition that we will use to prove Theorem \ref{thm:symp} in \S\ref{sub:def}.

The Liouville form $\lambda$ is a contact form on each $\partial P_\epsilon$ and its Reeb flow is parallel to the Hamiltonian vector field
\[X_{H_\epsilon}=\sum_{i=1}^2 p_i\frac{\partial}{\partial q_i}-\epsilon U'(|\qq|^2)\sum_{i=1}^2 q_i\frac{\partial}{\partial p_i}.\]
If $(\qq(t),\pp(t))$ is a trajectory of this flow, then $\pp(t)=\dot{\qq}(t)$. So $(\qq(t),\pp(t))$ is determined by the curve $\qq(t)$ which satisfies the equation $\ddot{\q}(t)=-\epsilon U'(|\q(t)|^2)\q(t)$.
As explained in \cite{bengia} and in \cite{albmaz}, a sequence of solutions to this equation with $\epsilon\to 0$ and bounded energy admits a subsequence which converges to a closed billiard trajectory in a suitable topology.

To get an idea of what is happening, we observe that for very small $\epsilon$, the acceleration $\ddot{\q}$ is very close to $0$, except in a neighborhood of $\partial D^2$. So $\qq$ is very close to a line segment away from $\partial D^2$ and it bends sharply near $\partial D^2$. At a point of maximum $t_0$ of $|\qq(t)|$, we have $\q(t_0)\cdot\dot{\qq}(t_0)=0$. It follows from the proof of Lemma \ref{lem:ind} below that $\qq(t)$ is symmetric with respect to the reflection about the line spanned by $\qq(t_0)$. Moreover 
\begin{align*}0&=\int_{t_0-\delta}^{t_0+\delta}\left\langle \ddot{\q}(t)+\epsilon U'(|\q(t)|^2)\q(t),\dot{\qq}(t_0)\right\rangle\,dt\\&=\left\langle \dot{\q}(t_0+\delta),\dot{\qq}(t_0)\right\rangle-\left\langle \dot{\q}(t_0-\delta),\dot{\qq}(t_0)\right\rangle+\epsilon \int_{t_0-\delta}^{t_0+\delta}\left\langle U'(|\q(t)|^2)\q(t),\dot{\qq}(t_0)\right\rangle\,dt.\end{align*}
So if we take a family of curves $\qq_\epsilon$ with constant $Z:=\dot{\qq}_\epsilon(t_0)$ such that the curve $\qq_\epsilon(t)$ for $t\in[t_0-\delta,t_0)$ converges to a line segment of direction $\dot{\q}(t_0-\delta)$, then the limiting curve $\underline{\q}(t)$ for $t\in(t_0,t_0+\delta]$ will also be a line segment and
\[\left\langle \dot{\underline{\q}}(t_0+\delta),Z\right\rangle=\left\langle \dot{\underline{\q}}(t_0-\delta),Z\right\rangle.\]
So $\underline{\qq}(t)$ is what we call a billiard trajectory.

A {\em billiard trajectory} is a curve $\qq(t)$ in $D^2\subset \R^2$ which is piecewise smooth and satisfies:
\begin{itemize}
\item $\ddot{\q}(t)=0$ and $|\dot{\q}(t)|=1$ whenever $\q$ is smooth at $t$.
\item If $\q$ is not smooth at $t_0$, then $\q(t_0)\in\partial D^2$ and 
\[\lim_{t\to t_0^-}\left\langle\dot{\q}(t),\q(t_0)\right\rangle=-\lim_{t\to t_0^+}\left\langle\dot{\q}(t),\q(t_0)\right\rangle.\]
\end{itemize} 

Let $Y$ be the space of points $(\qq,\pp)$ that belong to a billiard trajectory of $D^2$. Here $\pp$ is the velocity of the billiard trajectory at $\qq$. A natural pair of commuting independent Hamiltonians for the billiard flow on the disk is $(H_0,v)$ where $H_0(\qq,\pp)=\frac{1}{2}|\pp|^2$ and $v(\qq,\pp)=\qq\times\pp$ is the angular momentum. But the induced action is not toric. In fact the vector field $X_{H_0}$ induces an $\R$-action which is usually not periodic. We can use $v$ to produce a pair of Hamiltonians which generate a toric action. We do that indirectly by defining explicit action-angle coordinates as we explain below.
  
We let $\alpha(\qq,\pp)=2\arccos(\qq\times \pp)\in (0,2\pi)$. Let $L$ be the set of the points in $Y$ corresponding to the oriented line segment from $\qq_0\in\partial D^2$ to $\qq_1\in\partial D^2$. For $(\qq,\pp)\in L$, we define $s(\qq,\pp)$ to be the ratio $|\overrightarrow{\qq_0\qq}|/|\overrightarrow{\qq_0\qq_1}|$. It follows from a simple calculation that
\begin{equation*}
s(\qq,\pp)=\frac{\qq\cdot \pp+\sin\left(\frac{\alpha(\qq,\pp)}{2}\right)}{2\sin\left(\frac{\alpha(\qq,\pp)}{2}\right)}.
\end{equation*} 
We also define $\psi(\qq,\pp)=\arg(\qq_0)+s(\qq,\pp)\alpha(\qq,\pp)\in\R/2\pi\Z$.

We can see $Y$ as a subset of $\partial P_\epsilon$. Under this inclusion, it follows from a calculation using the definitions above that
\[\lambda=\left(2\sin\left(\frac{\alpha}{2}\right)-\alpha \cos\left(\frac{\alpha}{2}\right)\right)ds+\cos\left(\frac{\alpha}{2}\right)d\psi.\]
In order to obtain an actual toric domain, we need to perform a change of variables:
 \begin{align*}
\phi_1(\xx)&= s(\xx)-\frac{\psi(\xx)}{2\pi}\in\R/\Z,\\
\phi_2(\xx)&= \frac{\psi(\xx)}{2\pi}\in \R/\Z.
\end{align*}
So
\begin{equation}\label{eq:lam}\lambda=\left(2\sin\left(\frac{\alpha}{2}\right)-\alpha \cos\left(\frac{\alpha}{2}\right)\right)d\phi_1+\left(2\sin\left(\frac{\alpha}{2}\right)+(2\pi-\alpha)\cos\left(\frac{\alpha}{2}\right)\right)d\phi_2.\end{equation}

In the following sections, we will make these ideas precise and explain how an equation such as \eqref{eq:lam} implies that $\interior(P_L)$ is symplectomorphic to a toric domain.

\subsection{The toric coordinates}\label{sub:def}

We now fix $0<\epsilon<1$ and we let $Y=\partial P_{\epsilon}$ endowed with the contact form
\[\lambda=\frac{1}{2}\sum_{i=1}^2 (p_idq_i-q_idp_i).\]
For $(\qq,\pp)\in Y$, we define $v(\qq,\pp)=\qq\times \pp\in\R$, where $\times$ denotes the two-dimensional cross-product. We observe that $v$ is constant along the Reeb trajectories.
\begin{lemma}
The function $v$ takes values in $[-M,M]$ for some $M$. Moreover $v^{-1}(M)$ and $v^{-1}(-M)$ are circles.
\end{lemma}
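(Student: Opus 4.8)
The plan is to work directly with the Hamiltonian $H_\epsilon(\qq,\pp)=\tfrac12(|\pp|^2+\epsilon U(|\qq|^2))$ and its level set $Y=\partial P_\epsilon=\{H_\epsilon=\tfrac12\}\cap P_L$, and to study $v=\qq\times\pp$ as a function on $Y$. First I would observe that $Y$ is compact, so $v$ is bounded; let $M=\max_Y v$ and $-M'=\min_Y v$. By the symmetry $(\qq,\pp)\mapsto(\qq,-\pp)$ of $Y$ (which is a symplectomorphism preserving $H_\epsilon$ since $U$ only sees $|\qq|^2$), $v$ changes sign, so $v(Y)=[-M,M]$ with $M=M'>0$. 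It remains to identify $v^{-1}(\pm M)$; by the same symmetry it suffices to treat $v^{-1}(M)$.

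The key step is a Lagrange-multiplier / critical-point analysis. A point of $Y$ where $v$ attains its maximum is a critical point of $v$ constrained to the hypersurface $\{H_\epsilon=\tfrac12\}$ (the constraint $P_L$ will turn out to be inactive, which I would check at the end using $U(0)<1$ and monotonicity of $U$). So at such a point there is $\mu\in\R$ with $\nabla v=\mu\,\nabla H_\epsilon$. Computing the gradients, $\nabla v=(-p_2,p_1,q_2,-q_1)$ in coordinates $(q_1,q_2,p_1,p_2)$ while $\nabla H_\epsilon=(\epsilon U'(|\qq|^2)q_1,\epsilon U'(|\qq|^2)q_2,p_1,p_2)$. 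Writing $J$ for the standard complex structure, these conditions say $\qq$ and $\pp$ are each eigen-type relations forcing $\pp=\lambda J\qq$ and $\qq=\nu J\pp$ for scalars — equivalently $\pp\perp\qq$ and $|\pp|$, $|\qq|$ are linked by $|\pp|^2=\mu$ and $\epsilon U'(|\qq|^2)|\qq|^2=\mu\cdot(\text{same }\mu)$; the upshot is that the critical set is exactly the set of $(\qq,\pp)$ with $\qq\perp\pp$, $|\qq|=r_*$ and $|\pp|=\rho_*$ for the unique radius $r_*$ solving $\rho_*^2=\epsilon U'(r_*^2)r_*^2$ together with $\rho_*^2+\epsilon U(r_*^2)=1$ (uniqueness here is where convexity/monotonicity of $U$ enters: the map $r\mapsto \epsilon U(r^2)$ is strictly increasing, and the two equations have a unique common solution). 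This locus is $\{(\qq,\pp): |\qq|=r_*,\ \pp=\pm\rho_* J\qq/|\qq|\}$, i.e. two disjoint circles (the two signs), and $v=\qq\times\pp=\pm r_*\rho_*$ on them; the $+$ circle is $v^{-1}(M)$ with $M=r_*\rho_*$, the $-$ circle is $v^{-1}(-M)$. Each is manifestly a smoothly embedded circle, parametrized by the angle of $\qq\in r_*S^1$.

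I would then close two small gaps. One: confirm the corner $S^1_\qq\times S^1_\pp$ of $P_L$ does not contribute — there $|\qq|=1$, forcing $\epsilon U(1)\le\epsilon U(|\qq|^2)\le 1$... actually $U(r)\to\infty$ as $r\to1$ shows $|\qq|^2<1$ on $Y$ strictly once $\epsilon>0$ is such that the level set lies in the region $U<1/\epsilon$, so $Y$ is disjoint from $\{|\qq|=1\}$; similarly $H_\epsilon=\tfrac12$ with $\epsilon U(|\qq|^2)\ge\epsilon U(0)\le U(0)<1$ gives $|\pp|>0$, so $Y$ misses $\{\pp=0\}$ and is a smooth hypersurface, and the boundary faces of $P_L$ are never active. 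Two: verify $v$ has no other critical values equal to $\pm M$ — this is immediate from the computation above since the critical set of $v|_Y$ is exactly the two circles. The main obstacle I anticipate is organizing the Lagrange-multiplier computation cleanly — in particular showing the critical equations force $\qq\perp\pp$ and a single radius $r_*$, rather than some higher-dimensional critical manifold — and this is precisely where strict convexity ($U''>0$) and strict monotonicity ($U'>0$) of $U$ must be used to pin down $r_*$ uniquely and to rule out degeneracies.
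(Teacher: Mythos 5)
Your Lagrange-multiplier argument is correct and lands on the same critical radius and critical value as the paper, but by a genuinely different route. The paper exploits the dynamics: since $v$ is constant along Reeb trajectories, and $|\qq(t)|^2$ always has a maximum along a trajectory (forcing $\qq\cdot\pp=0$ there), every value of $v$ on $Y$ is of the form $f(r_0)$ with $f(r)=r\sqrt{1-\epsilon U(r^2)}$ and $r_0$ the signed maximal radius; the range of the odd function $f$ then gives $[-M,M]$, and $v^{-1}(\pm M)$ is identified as a circle by writing down the explicit rotating ODE solution at the critical radius and invoking uniqueness. You instead do a static constrained optimization: $\nabla v\parallel\nabla H_\epsilon$ on $\{H_\epsilon=1/2\}$ forces $\qq\perp\pp$ and $|\pp|^2=\epsilon U'(|\qq|^2)|\qq|^2$, and together with the constraint this pins down a unique radius $r_*$ (uniqueness from $g(u):=\epsilon U(u)+\epsilon U'(u)u$ being strictly increasing with $g(0)=\epsilon U(0)<1$ and $g\to\infty$). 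Both arguments isolate the same equation --- compare your criticality condition with $f'(\bar r)=0$ in \eqref{eq:fder} --- but the paper's dynamical setup directly seeds the parametrization of $\widehat Y$ used throughout \S\ref{sub:def}, whereas yours is more self-contained for this one lemma and makes the role of compactness and strict convexity of $U$ explicit without invoking the flow.

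One detail needs repair: your assertion that $Y$ misses $\{\pp=0\}$ is false, and the chain ``$\epsilon U(|\qq|^2)\ge\epsilon U(0)\le U(0)<1$ gives $|\pp|>0$'' is a non-sequitur (a lower bound on $\epsilon U(|\qq|^2)$ gives no upper bound). On $Y$ one has $|\pp|^2=1-\epsilon U(|\qq|^2)$, which does vanish where $|\qq|^2=U^{-1}(1/\epsilon)$. This is harmless to your argument: smoothness of $Y$ only requires $\nabla H_\epsilon\ne0$, which holds there since $\qq\ne0$ and $U'>0$ (and $(0,0)\notin Y$ because $H_\epsilon(0,0)<1/2$); the faces of $P_L$ are inactive because $|\qq|^2\le U^{-1}(1/\epsilon)<1$ and $|\pp|^2\le 1-\epsilon U(0)<1$ on $Y$, which is the bound you actually need; and points with $\pp=0$ have $v=0$, so they are irrelevant to the extremal level sets. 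Replace the false claim and its justification by the correct bound $|\pp|^2<1$ and the argument closes cleanly.
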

\begin{proof}
The Reeb flow is parallel to the vector field
\[V=\sum_{i=1}^2 p_i\frac{\partial}{\partial q_i}-\epsilon U'(|\qq|^2)\sum_{i=1}^2 q_i\frac{\partial}{\partial p_i}.\]
An integral curve of $V$ is a solution $(\qq(t),\pp(t))$ to the system of differential equations:
\begin{equation}
\left\{\begin{aligned}
\dot{\qq}(t)&=\pp(t)\\
\dot{\pp}(t)&=-\epsilon U'(|\qq|^2) \qq(t).
\end{aligned}\right.\label{eq:sys}
\end{equation}
In particular, a solution to \eqref{eq:sys} is determined by its projection $\qq(t)$, which satisfies
 \begin{equation}\label{eq:ddot}
\ddot{\qq}(t)=-\epsilon U'(|\qq|^2) \qq(t).
\end{equation}
Moreover, when specifying the initial conditions $(\qq(0),\dot{\qq}(0))$ to \eqref{eq:ddot}, it is enough to give the direction of $\dot{\qq}(0)$ since its length is determined by the fact that $(\qq(0),\dot{\qq}(0))\in Y$.

Let $(\qq(t),\pp(t))$ be a parametrization of an integral curve of $V$. We observe that 
\begin{align}
\frac{d}{dt}\Big(|\qq(t)|^2\Big)&=2 \pp(t)\cdot \qq(t),\label{eq:dot1}\\
\frac{d}{dt}\Big(\qq(t)\cdot \pp(t)\Big)&=|\pp(t)|^2-\epsilon U'(|\qq(t)|^2)|\qq(t)|^2.\label{eq:dot2}
\end{align}
It follows from \eqref{eq:dot1} and \eqref{eq:dot2} that $|\qq(t)|^2$ always has a maximum.  We note that the points of local extrema of $|\q(t)|$ are the same as the ones of $|\q(t)|^2$, but if $v(\q(t),\pp(t))=0$, then $|\q(t)|$ is not smooth for $t$ such that $\q(t)=0$.

If $t_0$ is a point of maximum or minimum of $|\qq(t)|^2$, then for every $t$, \[v(\qq(t),\pp(t))=\qq(t_0)\times\pp(t_0)=\delta|\qq(t_0)|\cdot|\pp(t_0)|=r_0\sqrt{1-\epsilon U(r_0^2)},\]
where $r_0=\delta|\qq(t_0)|$ and $\delta$ is the sign of $\qq(t_0)\times\pp(t_0)$. For $r\in [-\sqrt{U^{-1}(1/\epsilon)},\sqrt{U^{-1}(1/\epsilon)}]$, let $f(r)=r\sqrt{1-\epsilon U(r^2)}$. It follows from our choice of $U$ that $f$ is an odd function and that it has exactly two critical points $\bar{r}>0$ and $-\bar{r}<0$. Moreover $\bar{r}$ and $-\bar{r}$ are the points of global maximum and minimum, respectively. Let $M=f(\bar{r})$. So $f$ takes values in $[-M,M]$.

Let $C_{\pm}=v^{-1}(\pm M)$. We will show that $C_+$ and $C_-$ are circles. Let $(\qq(t),\pp(t))$ be the integral trajectory of $V$ such that $v(\qq(t),\pp(t))=M$ and let $t_0$ be a point of maximum of $|\qq(t)|^2$. So $|\qq(t_0)|=\bar{r}$. From $f'(\bar{r})=0$ it follows that
\[0=1-\epsilon U(|\qq(t_0)|^2)-\epsilon U'(|\qq(t_0)|^2)|\qq(t_0)|^2=|\pp(t_0)|^2-\epsilon U'(|\qq(t_0)|^2)|\qq(t_0)|^2.\] So $\pp(t_0)=\sqrt{\epsilon U'(|\qq(t_0)|^2)} i\cdot \qq(t_0)$, where $\cdot$ denotes complex multiplication in the plane $(q_1,q_2)$. Now let \begin{equation}\label{eq:circ}\underline{\qq}(t)=e^{ i\sqrt{\epsilon U'(|\qq(t_0)|^2)} (t-t_0)}\qq(t_0).\end{equation} Then $\underline{\qq}$ satisfies~\eqref{eq:ddot} and $(\underline{\qq}(t_0),\underline{\dot{\qq}}(t_0))=(\qq(t_0),\pp(t_0))$. By the uniqueness of solutions of differential equations, $\underline{\qq}(t)=\qq(t)$. So $C_+\subset Y$ is a circle. Analogously, we can show that $C_-$ is a circle.

\end{proof}

Let $C_{\pm}$ be the circles defined above and let $\widehat{Y}=Y\setminus (C_+\cup C_-)$. We will show that $v|_{\widehat{Y}}$ is a torus bundle and we will define a trivialization $(\phi_1,\phi_2):\widehat{Y}\to T^2$. In other words, we will construct a diffeomorphism $\widehat{Y}\cong (-M,M)\times T^2$. Before doing that, we will prove a lemma that will be necessary for the definition of the functions $\phi_1$ and $\phi_2$.

\begin{lemma}\label{lem:ind}
Let $(\qq(t),\pp(t))$ be a Reeb trajectory.
 \begin{enumerate}[(a)]
\item If $t_0<t_1$ are two consecutive points of maximum of  $|\qq(t)|$, then the differences $t_1-t_0\in\R$ and $\arg(\qq(t_1))-\arg(\qq(t_0))\in \R/2\pi\Z$ are independent of the choice of the pair $t_0,t_1$.

\item The differences in (a) depend only on the value of $v(\qq(t),\pp(t))$.
\end{enumerate}

\end{lemma}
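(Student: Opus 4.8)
The plan is to exploit the two conserved quantities of the flow \eqref{eq:sys} together with the rotational symmetry of the problem, reducing the question to a statement about a one-dimensional reduced system. The key observation is that equation \eqref{eq:ddot} is invariant under the $SO(2)$-action rotating $\qq$ (and hence $\pp=\dot\qq$), and that both the energy $H_\epsilon=\tfrac12(|\pp|^2+\epsilon U(|\qq|^2))$ and the angular momentum $v=\qq\times\pp$ are preserved. On the energy level $Y$, fixing the value of $v$ therefore pins down the pair $(|\qq(t)|^2,\qq(t)\cdot\pp(t))$ up to the flow, because from $|\pp|^2=1-\epsilon U(|\qq|^2)$ and $v^2=|\qq|^2|\pp|^2-(\qq\cdot\pp)^2$ one recovers $(\qq\cdot\pp)^2$ as a function of $r^2:=|\qq|^2$ alone.

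\textbf{Step 1: the reduced radial dynamics.} First I would set $r(t)=|\qq(t)|$ and derive from \eqref{eq:dot1}--\eqref{eq:dot2} a closed autonomous second-order ODE for $r$, or equivalently a first-order ODE $\dot r^2 = g_v(r)$ where $g_v(r) = 1-\epsilon U(r^2) - v^2/r^2$ (valid on the region where this is nonnegative). Since $U',U''>0$, the function $r\mapsto 1-\epsilon U(r^2)-v^2/r^2$ is strictly concave in a suitable sense and has, for each admissible $v$ with $|v|<M$, exactly two positive roots $r_{\min}(v)<r_{\max}(v)$, which are precisely the values of $|\qq|$ at the consecutive minima and maxima established in the previous lemma. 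Thus between two consecutive maxima of $|\qq(t)|$, the radius $r(t)$ performs exactly one full excursion from $r_{\max}(v)$ down to $r_{\min}(v)$ and back. The elapsed time is
\[
t_1-t_0 \;=\; 2\int_{r_{\min}(v)}^{r_{\max}(v)}\frac{dr}{\sqrt{g_v(r)}},
\]
which manifestly depends only on $v$ (and $\epsilon$, which is fixed). This proves the independence claim for $t_1-t_0$ in part (a), and simultaneously establishes part (b) for this difference.

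\textbf{Step 2: the angular increment.} For the angle, I would use conservation of $v$ directly: writing $\qq(t)$ in polar form $\qq=r e^{i\theta}$, the cross product gives $r^2\dot\theta = v$, so $\dot\theta = v/r^2$. Therefore
\[
\arg(\qq(t_1))-\arg(\qq(t_0)) \;=\; \int_{t_0}^{t_1}\frac{v}{r(t)^2}\,dt \;=\; 2\,v\int_{r_{\min}(v)}^{r_{\max}(v)}\frac{dr}{r^2\sqrt{g_v(r)}} \pmod{2\pi},
\]
again a quantity depending only on $v$. (When $v=0$ one must treat the passage through $\qq=0$ separately, but then $r_{\min}(v)=0$ and the trajectory is a straight segment through the origin, for which the angular increment is $\pi$; I would note this as the degenerate case and handle it by the reflection symmetry.) This disposes of both the independence of the chosen pair $t_0,t_1$ and the dependence only on $v$, giving (a) and (b) together.

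\textbf{Main obstacle.} The substantive point, rather than the formal integration, is justifying that between two consecutive maxima the radial motion is exactly the monotone excursion described — i.e.\ that $|\qq(t)|$ has no other critical points in between, that the turning points are simple ($g_v'\neq 0$ at the roots, so the integrals converge), and that the qualitative picture of $g_v$ (exactly two positive roots, the correct sign in between) follows from the hypotheses $U^{(j)}>0$. This is where the convexity assumptions on $U$ are really used, and it is also the step alluded to in the text ("it follows from the proof of Lemma~\ref{lem:ind} below that $\qq(t)$ is symmetric with respect to the reflection about the line spanned by $\qq(t_0)$"): the reflection symmetry of $\qq$ about the line through a maximum follows from uniqueness of solutions of \eqref{eq:ddot} applied to $t\mapsto$ (reflection of $\qq(2t_0-t)$), and this symmetry is what makes the down-and-back excursion structure transparent and lets one reduce $t_1-t_0$ and the angular increment to integrals over a single half-period. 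I would present the reflection argument first, then read off the monotonicity and the two integral formulas.
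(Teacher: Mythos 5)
Your approach---reduce to a one-dimensional autonomous radial ODE and derive integral formulas for the period and the angular increment that manifestly depend only on $v$---is close in spirit to the paper's computation (compare equations \eqref{eq:s2}, \eqref{eq:alpha} and \eqref{eq:G}, where essentially these integrals appear), and you correctly identify the reflection-and-uniqueness argument as the mechanism behind the excursion structure; the paper proves part (a) by exactly that reflection. Where the paper proves part (b) by showing via $SO(2)$-invariance of the equations and uniqueness of ODE solutions that any two Reeb trajectories with equal $v$ are conjugate by a rotation, you instead read (b) off from the $v$-dependence of the reduced integrals; both routes work, and yours displays the dependence on $v$ somewhat more directly.

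There is, however, a parametrization slip in Step~1 that matters for $t_1-t_0$. The lemma is about a \emph{Reeb} trajectory, whereas equations \eqref{eq:dot1}--\eqref{eq:dot2}, from which you derive $\dot r^2=g_v(r)$, are stated for an integral curve of the vector field $V$, which is only \emph{parallel} to the Reeb field. The paper tracks the reparametrization through the factor $K(u)=\tfrac12\bigl(1-\epsilon U(u)+\epsilon U'(u)u\bigr)$ (see \eqref{eq:K2}); in Reeb time one has $\dot r^2=g_v(r)/K(r^2)^2$, so
\[
t_1-t_0 \;=\; 2\int_{r_{\min}(v)}^{r_{\max}(v)}\frac{K(r^2)\,dr}{\sqrt{g_v(r)}},
\]
not the formula you wrote. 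The conclusion that $t_1-t_0$ depends only on $v$ survives, since the corrected integrand still depends only on $r$, but the explicit period is off by a factor of $K(r^2)$ in the integrand, and since $t_1-t_0$ is not reparametrization-invariant this is a real error rather than a notational convention. (Your angular formula comes out correct: $\arg(\qq(t_1))-\arg(\qq(t_0))$ is a geometric quantity of the curve and the $K$ factors from $\dot\theta$ and $dt$ cancel.) You also correctly flag, but do not carry out, the justification of the monotone excursion structure (exactly two simple turning points between consecutive maxima); as you note, the reflection argument supplies it, and it would need to be written out to complete the proof.
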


\begin{proof}
(a) Let $(\qq(t),\pp(t))$ be a Reeb trajectory and let $(\tilde{\qq}(\tau),\tilde{\pp}(\tau))$ be a parametrization of the same curve, but now as an integral curve of $V$, i.e.,
\begin{equation*}\label{eq:rep}
(\tilde{\qq}(\tau),\tilde{\pp}(\tau))=(\qq(t(\tau)),\pp(t(\tau))),
\end{equation*}
for some smooth function $t(\tau)$. By a simple computation, we obtain:
\begin{equation*}
t'(\tau)=\frac{1}{2}\Big(1-\epsilon U(|\q(t(\tau))|^2)+\epsilon U'(|\q(t(\tau))|^2)|\q(t(\tau))|^2\Big)=K(|\q(t(\tau))|^2),
\end{equation*}
where $K(u)=\frac{1}{2}(1-\epsilon U(u)+\epsilon U'(u) u)$. So
\begin{equation}\label{eq:K2}
\pp(t)=K(|\qq(t)|^2)\dot{\qq}(t).
\end{equation}
We write $\q(t)$ in polar coordinates $\q(t)=r(t)e^{i\theta(t)}$. It follows from \eqref{eq:K2} that \eqref{eq:ddot} is equivalent to the following system of equations:
\begin{equation}
\left\{\begin{aligned}
K(r^2)^2(\ddot{r}-r(\dot{\theta})^2)+2K(r^2)K'(r^2)r(\dot{r})^2&=-\epsilon U'(r^2) r.\\
K(r^2)^2(2\dot{r}\dot{\theta}+r\ddot{\theta})+2K(r^2)K'(r^2)r^2\dot{r}\dot{\theta}&=0.
\end{aligned}\right.
\label{eq:polar}
\end{equation}
Now let $t_0<t_1<t_2$ be three consecutive points of maximum of $r(t)$. By a translation of time, we can assume without loss of generality that $t_0=0$. We now let $\underline{r}(t)=r(2t_1-t)$ and $\underline{\theta}(t)=2\theta(t_1)-\theta(2t_1-t)$. We observe that $(\underline{r},\underline{\theta})$ satisfies \eqref{eq:polar}. Moreover $$\underline{r}(t_1)=r(t_1),\quad \underline{\theta}(t_1)=\theta(t_1),\quad \dot{\underline{r}}(t_1)=\dot{r}(t_1)=0,\quad\dot{\underline{\theta}}(t_1)=\dot{\theta}(t_1).$$ By the uniqueness of solutions of differential equations, we conclude that $\underline{r}(t)= r(t)$ and $\underline{\theta}(t)= \theta(t)$. So
\begin{align}
r(t)&=r(2t_1-t),\nonumber \\
\theta(t)&=2\theta(t_1)-\theta(2t_1-t). \label{eq:theta}
\end{align}
Now, since $\dot{r}(t)=-\dot{r}(2t_1-t)$ and $\ddot{r}(t)=\ddot{r}(2t_1-t)$ and since there are no points of maximum of $r(t)$ in $(0,t_1)$, it follows that $2t_1$ is a point of maximum of $r(t)$ and that there are no other points of maximum in the interval $(t_1,2t_1)$. So $t_2-t_1=t_1$ and $\theta(t_2)-\theta(t_1)=\theta(t_1)-\theta(0)$. By induction, we conclude that the difference between any two consecutive points of maximum of $r(t)$ is always $t_1$ and that the difference between their $\theta$-values is $\theta(t_1)-\theta(0)$.
\newline

 (b) We first claim that if $(\qq_1,\pp_1),(\qq_2,\pp_2)\in\widehat{Y}$ are such that $v(\qq_1,\pp_1)=v(\qq_2,\pp_2)$, then $(A\cdot \qq_1,A\cdot \pp_1)$ is on the same Reeb trajectory as $(\qq_2,\pp_2)$ for some $A\in SO(2,\R)$.
Let $(\qq_1(t),\pp_1(t))$ and $(\qq_2(t),\pp_2(t))$ be Reeb trajectories going through $(\qq_1,\pp_1)$ and $(\qq_2,\pp_2)$ at time $0$, respectively. We can assume without loss of generality that $t=0$ is a point of minimum of $|\qq_1(t)|^2$ and $|\qq_2(t)|^2$.
We assumed that $\qq_1\times\pp_1=\qq_2\times\pp_2$. It follows from~\eqref{eq:dot1} that $\qq_j\cdot\pp_j=0$ for $j=1,2$. For $-\sqrt{U^{-1}(1/\epsilon)}\le r\le \sqrt{U^{-1}(1/\epsilon)}$, we recall that $f(r)=r\sqrt{1-\epsilon U(r^2)}$.
Since $(\qq_j,\pp_j)\in \widehat{Y}$, we have
\begin{equation}\label{eq:equal}f(\delta |\qq_1|)=\qq_1\times\pp_1=\qq_2\times\pp_2=f(\delta |\qq_2|),\end{equation}
where $\delta$ is the sign of $\qq_1\times\pp_1$.
We observe that \begin{equation}\label{eq:fder}f'(r)=\frac{1-\epsilon U(r^2)-\epsilon U'(r^2) r^2}{\sqrt{1-\epsilon U(r^2)}}.\end{equation}
Let $\bar{r}$ be the unique positive critical point of $f$. Then $-\bar{r}<r< \bar{r}$ if, and only if, $f'(r)>0$. In particular $f$ is injective on $(-\bar{r},\bar{r})$.
Since $\frac{d^2}{dt^2}|_{t=0}(|\qq_j(t)|^2)>0$, it follows from~\eqref{eq:dot1} and~\eqref{eq:dot2} that $\delta|\qq_j|\in(-\bar{r},\bar{r})$. So~\eqref{eq:equal} implies that $|\qq_1|=|\qq_2|$. Thus there exists $A\in SO(2)$ such that $A\cdot \qq_1=\qq_2$. Since $\qq_j\cdot\pp_j=0$ it follows that $\pp_j$ is a positive multiple of the $\delta\pi/2$ rotation of $\qq_j$. So $A\cdot\pp_1=\pp_2$. 
By the uniqueness of solutions of differential equations, it follows that \[(A\cdot \qq_1(t),A\cdot \pp_1(t))=(\qq_2(t),\pp_2(t)).\] So the differences in (a) are equal for the curves $(\qq_1(t),\pp_1(t))$ and $(\qq_2(t),\pp_2(t))$. Therefore these differences only depend on the value of the function $v$.

\end{proof}

 We will now define functions $s:\widehat{Y}\to \R/\Z$ and $\psi:\widehat{Y}\to \R/2\pi\Z$ as follows. Let $v\in(-M,M)$ and let $(\qq(t),\pp(t))$ be a Reeb trajectory with $v=v(\qq(t),\pp(t))$. If $t_0<t_1$ are two consecutive points of maximum of $|\qq(t)|$, we let $G(v)=t_1-t_0\in\R$ and $\underline{\alpha}(v(\qq(t),\pp(t)))=\arg(\qq(t_1))-\arg(\qq(t_0))\in \R/2\pi\Z.$ It follows from Lemma \ref{lem:ind} that $G(v)$ and $\underline{\alpha}(v)$ are well-defined. We now let $\alpha(v)\in\R$ be the continuous lift of $\underline{\alpha}(v)$ satisfying $\alpha(0)=\pi$.

\begin{definition} 
For a Reeb trajectory $(\qq(t),\pp(t))$ and a point of maximum $t_0$ of $|\q(t)|$, we let \begin{align*}
\tilde{s}(\qq(t),\pp(t))&=\frac{t-t_0}{G(v(\qq(t),\pp(t)))}\in\R,\\
\psi(\qq(t),\pp(t))&=\arg(\qq(t_0))+\tilde{s}(\qq(t),\pp(t)) \alpha(v(\qq(t),\pp(t)))\in\R/2\pi\Z.
 \end{align*}
We finally let $s(\qq(t),\pp(t))$ be the projection of $\tilde{s}(\qq(t),\pp(t))$ to $\R/\Z$.
\end{definition}

\begin{lemma}
If we choose a different point of maximum, then $\tilde{s}(\qq(t),\pp(t))$ changes by an integer and $\psi$ does not change. So the functions $s:\widehat{Y}\to \R/\Z$ and $\psi:\widehat{Y}\to \R/2\pi\Z$ are well-defined.
\end{lemma}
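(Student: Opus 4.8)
The statement to prove is that the functions $s$ and $\psi$ are well-defined, i.e., independent of the choice of point of maximum $t_0$.

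The plan is to quantify what happens when we replace one point of maximum $t_0$ by another point of maximum $t_0'$. By Lemma \ref{lem:ind}, consecutive points of maximum of $|\qq(t)|$ differ by exactly $G(v)$ in $t$ and by exactly $\underline{\alpha}(v)$ in argument. Hence any two points of maximum $t_0, t_0'$ satisfy $t_0' - t_0 = nG(v)$ for some $n\in\Z$, and correspondingly $\arg(\qq(t_0')) - \arg(\qq(t_0)) = n\underline\alpha(v)$ in $\R/2\pi\Z$. I would first record this consequence of Lemma \ref{lem:ind}(a) explicitly.

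Next I would compute the effect on $\tilde s$. With respect to $t_0'$ we have $\tilde s'(\qq(t),\pp(t)) = \frac{t - t_0'}{G(v)} = \frac{t-t_0}{G(v)} - \frac{t_0'-t_0}{G(v)} = \tilde s(\qq(t),\pp(t)) - n$. So $\tilde s$ changes by the integer $-n$, and therefore its projection $s$ to $\R/\Z$ is unchanged. This handles the claim about $s$.

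Finally, for $\psi$, I would compute
\[
\psi'(\qq(t),\pp(t)) = \arg(\qq(t_0')) + \tilde s'(\qq(t),\pp(t))\,\alpha(v) = \arg(\qq(t_0)) + n\,\underline\alpha(v) + (\tilde s(\qq(t),\pp(t)) - n)\,\alpha(v).
\]
Since $\alpha(v)$ is a lift of $\underline\alpha(v)$, we have $n\,\alpha(v) \equiv n\,\underline\alpha(v) \pmod{2\pi\Z}$, so the $n\,\underline\alpha(v)$ and $-n\,\alpha(v)$ terms cancel modulo $2\pi\Z$, leaving $\psi'(\qq(t),\pp(t)) = \arg(\qq(t_0)) + \tilde s(\qq(t),\pp(t))\,\alpha(v) = \psi(\qq(t),\pp(t))$ in $\R/2\pi\Z$. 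There is no real obstacle here; the only point requiring a little care is the bookkeeping between the $\R/2\pi\Z$-valued quantity $\underline\alpha$ and its real lift $\alpha$, which is precisely why the integer shift in $\tilde s$ must be matched against $\underline\alpha$ rather than $\alpha$. This also implicitly relies on part (b) of Lemma \ref{lem:ind} to ensure $G$ and $\alpha$ are functions of $v$ alone, so they are the same regardless of which point of maximum (hence which nearby piece of the trajectory) is used.
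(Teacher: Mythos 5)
Your proof is correct and takes essentially the same route as the paper: the paper verifies the claim for two \emph{consecutive} points of maximum by the same cancellation (using that $\alpha(v)$ lifts $\underline\alpha(v)$ so that $\arg(\qq(t_0))+\alpha(v)=\arg(\qq(t_1))$ in $\R/2\pi\Z$) and then inducts, whereas you carry out the computation for an arbitrary $n$th translate in one stroke. The key observation — matching the integer shift in $\tilde s$ against $\underline\alpha$ rather than its real lift $\alpha$ — is identical in both.
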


\begin{proof}
Let $t_0<t_1$ be consecutive points of maximum of $|\qq(t)|$. We observe that
\begin{align*}
\tilde{s}(\qq(t),\pp(t))&=\frac{t-t_0}{G(v)}=\frac{t-t_1}{G(v)}+1,\\
\psi(\qq(t),\pp(t))&=\arg(\qq(t_0))+ \frac{t-t_0}{G(v)}\alpha(v)=\arg(\qq(t_0))+\alpha(v)+\frac{t-t_1}{G(v)}\alpha(v)\\
&=\arg(\qq(t_1))+ \frac{t-t_1}{G(v)}\alpha(v)\in\R/2\pi\Z.
\end{align*}
By induction, if we choose a different point of maximum, $\tilde{s}(\qq(t),\pp(t))$ changes by an integer and $\psi(\qq(t),\pp(t))$ does not change. Hence the functions $s$ and $\psi$ are well-defined.
\end{proof}
 \begin{rmk}\label{rem:ag}
It follows from the proof of Lemma \ref{lem:ind} that if $t_0<t_1$ are consecutive critical points of $|\qq(t)|^2$, then $G(v)=2(t_1-t_0)$. Moreover if $v\neq 0$, then $\underline{\alpha}(v)=2(\arg(\qq(t_1))-\arg(\qq(t_0)))$.
\end{rmk}

\begin{rmk}\label{rem:arg}
If $t_0<t_1$ are consecutive critical points of $|\qq(t)|^2$ such that $t_0$ is a point of maximum and $t_1$ is a point of minimum, then
\[\sg(v)\arg(\qq(t_1))=\arg(\qq(t_0))+\frac{\alpha(v)}{2},\]
provided that $v\neq 0$. If $v=0$, then $\alpha(v)=\pi$ and $\arg(\pp(t_1))=\arg(\qq(t_0))+\pi$.
\end{rmk} 
 
 \begin{prop}\label{prop:diffeo}
 The function $(v,s,\psi):\widehat{Y}\to (-M,M)\times \R/\Z\times \R/2\pi\Z$ is a diffeomorphism. Moreover,
\[\lambda= \big(G(v)-\alpha(v) v\big)ds+ v\,d\psi\in\Omega^1(\widehat{Y},\R).\]
 \end{prop}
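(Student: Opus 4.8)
The plan is to verify the three claims of Proposition~\ref{prop:diffeo} in turn: that $(v,s,\psi)$ is a bijection, that it is a diffeomorphism, and that the Liouville form has the stated expression in these coordinates.

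\emph{Step 1: bijectivity.} Given $(v,s,\psi)\in(-M,M)\times\R/\Z\times\R/2\pi\Z$, I would recover a point of $\widehat Y$ as follows. Since $f(r)=r\sqrt{1-\epsilon U(r^2)}$ is injective on $(-\bar r,\bar r)$ with $f(\R)\supset(-M,M)$, the value $v$ determines $r_0\in(-\bar r,\bar r)$ with $f(r_0)=v$, hence the magnitude $|\qq|=|r_0|$ and the sign $\delta=\sg(r_0)$ at the (say) minimum of $|\qq(t)|^2$; at such a point $\qq\cdot\pp=0$ by~\eqref{eq:dot1} and $\pp$ is a definite positive multiple of the $\delta\pi/2$-rotation of $\qq$, exactly as in the proof of Lemma~\ref{lem:ind}(b). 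So $v$ together with $\arg(\qq(t_{\min}))$ determines the Reeb trajectory up to the $SO(2)$-action, and then $\tilde s$ (a lift of $s$) fixes which point of the trajectory we are at via $t-t_0=\tilde s\,G(v)$, while the defining formula for $\psi$ then pins down $\arg(\qq(t_0))$ and hence the needed rotation. Injectivity is the reverse reading of the same chain, using the well-definedness of $G$ and $\alpha$ (Lemma~\ref{lem:ind}) and of $s,\psi$ (the preceding lemma). I should also check that no two distinct points of a single trajectory get the same $(s,\psi)$: this is where one uses that $\psi$ advances by $\alpha(v)$ each period while $s$ advances by $1$, so that $(s,\psi)$ is injective on the trajectory precisely because $\widehat Y$ excludes $C_\pm$ where the trajectories degenerate to circles.

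\emph{Step 2: smoothness.} Smoothness of $v$ is immediate ($v=\qq\times\pp$). For $s$ and $\psi$ the issue is that they are defined via the first-exit-to-maximum data of the flow; I would argue that $G(v)$ and $\alpha(v)$ depend smoothly on $v$ by smooth dependence of solutions of~\eqref{eq:ddot} on initial conditions together with the implicit function theorem applied at the transverse maxima of $|\qq(t)|^2$ (transversality holds since $\tfrac{d^2}{dt^2}|\qq|^2\neq 0$ there, by~\eqref{eq:dot1},~\eqref{eq:dot2} and $-\bar r<\delta|\qq|<\bar r$ on $\widehat Y$). Then $t_0$, $\arg(\qq(t_0))$ and the elapsed time $t-t_0$ are locally smooth functions on $\widehat Y$, whence so are $\tilde s$, $s$, $\psi$. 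To see the map is a diffeomorphism rather than merely a smooth bijection, I would exhibit the smooth inverse constructed in Step~1 (every operation there — inverting $f$ on $(-\bar r,\bar r)$, solving the ODE, rotating — is smooth).

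\emph{Step 3: the formula for $\lambda$.} This is the computational heart. Starting from $\lambda=\tfrac12\sum(p_i\,dq_i-q_i\,dp_i)$, restricted to $\widehat Y$, I would evaluate $\lambda$ on the two coordinate vector fields $\partial_s$ and $\partial_\psi$ dual to $ds,d\psi$ (at fixed $v$), i.e.\ express the basis $\partial_s,\partial_\psi$ in terms of the Reeb vector field $R$ (parallel to $V$) and the infinitesimal rotation generator. Concretely: the flow of $R$ moves along the trajectory, contributing $\dot t\,\partial_t$; and the $SO(2)$-action rotating $(\qq,\pp)$ simultaneously is a symmetry generated by a vector field $\partial_{\mathrm{rot}}$ with $\lambda(\partial_{\mathrm{rot}})=\tfrac12(\qq\times\pp)\cdot 2 = v$ up to the normalization conventions here (one checks $\lambda$ evaluated on the generator of the simultaneous rotation of $\qq$ and $\pp$ equals $\qq\times\pp=v$, since $\tfrac12(p_1q_2'-\dots)$ with $(q',p')=(\,i\qq,\,i\pp\,)$ gives $\tfrac12(|\qq|^2+|\pp|^2)$— so in fact I'd want the generator normalized so that its $\lambda$-value is $v$, which matches the heuristic computation in~\eqref{eq:lam}). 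Then $\psi$ changes at unit rate under $\partial_{\mathrm{rot}}$ and $s$ is unchanged, while under the Reeb flow $s$ changes at rate $1/G(v)$ and $\psi$ at rate $\alpha(v)/G(v)$ (by the defining formulas, since $\arg(\qq(t_0))$ is Reeb-invariant). Inverting this $2\times2$ relation gives $\partial_s = G(v)\,R_{\mathrm{Reeb-part}} - \alpha(v)\,\partial_{\mathrm{rot}}$ and $\partial_\psi=\partial_{\mathrm{rot}}$, modulo the fact that $\lambda(R)$ must be computed: since $\lambda$ is the contact form and $R$ the parallel-to-Reeb field, $\lambda(R)$ is the appropriate positive normalization, which the heuristic in~\S\ref{sub:bil} shows must be arranged so that the coefficient of $ds$ is $G(v)-\alpha(v)v$ and that of $d\psi$ is $v$. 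I would carry this out carefully using~\eqref{eq:K2} to relate $\pp$ and $\dot\qq$, so that $\lambda(V)=\tfrac12(\qq\times\pp)\cdot(\text{factor})$ is controlled.

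\emph{Main obstacle.} The genuinely delicate point is Step~3: matching normalizations so that $\lambda$ comes out exactly as $\big(G(v)-\alpha(v)v\big)ds+v\,d\psi$ rather than off by a trajectory-dependent factor. The safe route is to show $d\lambda$ restricted to $\widehat Y$ (which is $0$, since $\lambda$ is a contact form — no, $d\lambda\neq 0$; rather $\lambda\wedge d\lambda\neq 0$) — more precisely, I'd use that $\iota_R d\lambda=0$ with $\lambda(R)=$ const along $R$ to conclude that $\lambda - (G(v)-\alpha(v)v)ds - v\,d\psi$ is closed and vanishes on the Reeb direction and on $\partial_\psi$, hence is $g(v)\,dv$ for some function $g$; then a separate direct evaluation at one convenient point of each trajectory (e.g.\ a maximum of $|\qq|^2$, where $s\in\Z$ and $\pp\perp\qq$) forces $g\equiv 0$. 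This reduces the full computation to a single pointwise check plus the flow-rate bookkeeping, which is the cleanest way to avoid drowning in the polar-coordinate algebra of~\eqref{eq:polar}.
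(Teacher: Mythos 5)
Your Steps~1 and~2 (bijectivity via an explicit inverse built from $f$, rotation and the ODE flow; smoothness from smooth dependence on initial conditions plus the implicit function theorem at the transverse maxima) mirror the paper's Steps~1 and~2 closely and are fine.

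The real issue is Step~3. Your flow-rate bookkeeping correctly gives $\lambda(\partial_s)=G(v)-\alpha(v)v$ and $\lambda(\partial_\psi)=v$ (using $\lambda(R)=1$ for the genuine Reeb field and $\lambda(X)=v$ for the $SO(2)$-generator), so $\lambda = A\,dv + \big(G(v)-\alpha(v)v\big)ds + v\,d\psi$ with an undetermined function $A=\lambda(\partial_v)$. Your ``safe route'' then asserts that $\eta' := \lambda - (G-\alpha v)ds - v\,d\psi$ is \emph{closed}, hence of the form $g(v)\,dv$; but closedness of $\eta'$ is exactly equivalent to $\partial_s A = \partial_\psi A = 0$, which is what you are trying to establish, not something you already have. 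What $\iota_R d\lambda=0$ actually gives is the single relation $G(v)\,R(A) = G'(v)-v\,\alpha'(v)$, one linear condition on $A$, not $dA\wedge dv=0$. Hidden in your route, therefore, are two genuine facts you have not argued: that $A$ depends on $v$ alone, and the nontrivial identity $G'(v)=v\,\alpha'(v)$ linking the period function and the rotation number. (Without that identity, even granting $\eta=\lambda - v\,d\tilde\theta - d(G(v)\tilde s) = 0$ in local lifts, one gets $\lambda = (G-\alpha v)d\tilde s + v\,d\tilde\psi + \tilde s\,(G'-v\alpha')\,dv$, with a residual $\tilde s$-dependent $dv$-term that must be killed.)

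The paper's route is designed precisely to avoid this gap: it works in local lifted coordinates with $\eta = \lambda - v\,d\tilde\theta - d(G(v)\tilde s)$, whose Reeb-invariance $\mathcal{L}_R\eta=0$ is \emph{automatic} (because $d(G(v)\tilde s)$ is exact, $\iota_R d\lambda=0$, $dv(R)=0$, $d\tilde\theta(R)=0$, $\eta(R)=0$). That reduces everything to checking $\eta|_D=0$ on the transversal $D$ of maxima of $|\qq|^2$, which is done by identifying $\partial/\partial v$ with the auxiliary vector field $W$ there and computing against $R,W,X$ directly. The identity $G'(v)=v\alpha'(v)$ is then extracted afterwards (Step~4 of the paper) by repeating the computation on the transversal $\bar D$ of minima, where $\tilde s=1/2$. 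So the missing ingredient in your plan is either to prove $G'=v\alpha'$ and $A=A(v)$ directly (for instance by the ergodicity/density argument on irrational tori, which you do not invoke), or to switch to the paper's $\eta$, for which Reeb-invariance is free. Your pointwise evaluation at a maximum does correctly finish the argument once one of those is in place.
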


 \begin{proof}
Step 1: We first show that $(v,s,\psi)$ is smooth.

 First we note that $v$ is smooth. Let $N$ be an open and connected subset of $Y$ which satisfies the following property:
Each Reeb trajectory $\gamma(t)=(\qq(t),\pp(t))$ intersecting $N$ does so in a connected subset and there is exactly one point of maximum $t_0$ and one point of minimum of $t_1>t_0$ of $|\qq(t)|^2$ satisfying $\{\gamma(t_0),\gamma(t_1)\}\subset N$. We observe that $s|_N$ lifts to a function $\tilde{s}:N\to \R$. Moreover, we can define a continuous function $\tilde{\theta}:N\to \R$ such that \[\tilde{\theta}(\qq,\pp)\equiv \arg(\qq(t_0))\quad (\text{mod } 2\pi), \]where $t_0$ is a point of maximum of $|\qq(t)|$ for a Reeb orbit $(\qq(t),\pp(t))$ going through $(\qq,\pp)$ and $\gamma(t_0)\in N$. It follows from Remark \ref{rem:ag} that we can choose $\tilde{s}$ so that $\tilde{s}(\qq(t_j),\pp(t_j))=j/2$ for $j=0,1$.
 These lifts determine a lift $\tilde{\psi}$ of $\psi|_N$ such that $\tilde{\psi}=\tilde{\theta}+\alpha(v)\tilde{s}$. It follows from the smoothness results of differential equations that $\tilde{s}$, $\tilde{\theta}$ and $\tilde{\psi}$ are smooth. Since every point of $\widehat{Y}$ is contained in such a subset $N$, it follows that $(v,s,\psi)$ is smooth.

 Step 2:  We show that the function $(v,s,\psi)$ is a diffeomorphism.

We will construct the inverse function $\Xi:(-M,M)\times\R/\Z\times \R/2\pi\Z \to\widehat{Y}$.
Let $(v_0,s_0,\psi_0)\in(-M,M)\times\R/\Z\times \R/2\pi\Z$. We consider $\tilde{s}_0\in\R$ a pre-image of $s_0$ under the quotient map $\R\to\R/\Z$ and we let \[\theta_0=\psi_0-\alpha(v_0)\tilde{s}_0+\frac{\alpha(v_0)}{2}  \in\R/2\pi\Z.\]  By~\eqref{eq:fder}, $f'(r)>0$ for all $r\in(-\underline{r},\underline{r})$. So $f|_{(-\underline{r},\underline{r})}$ is a diffeomorphism onto $(-M,M)$. Let $r_0=f|_{(-\underline{r},\underline{r})}^{-1}(v_0)$. We now let $\qq_0 =r_0 e^{ i \theta_0}$ and $\pp_0=\sqrt{1-\epsilon U(r_0^2)} ie^{i \theta_0}$. Let $(\qq(t),\pp(t))$ be the Reeb trajectory such that $(\qq(G(v_0)/2),\pp(G(v_0)/2))=(\qq_0,\pp_0)$. We let \[\Xi(v_0,s_0,\psi_0)=(\qq(G(v_0)\tilde{s}_0),\pp(G(v_0)\tilde{s}_0)).\]

We claim that $\Xi$ is well-defined, smooth and that $\Xi$ is the inverse of $(v,s,\psi)$. To see that, we first note that $v(\Xi(v_0,s_0,\psi_0))=r_0 \sqrt{1-\epsilon U(r_0^2)} =f(r_0)=v_0$. It follows from Remark \ref{rem:ag} that the difference in time between two consecutive critical points of $|\qq(t)|^2$ is $G(v_0)/2$. So $0$ is a point of maximum of $|\qq(t)|^2$. Moreover, it follows from \eqref{eq:theta} that $\Xi(v_0,s_0,\psi_0)$ does not depend on the choice of $\tilde{s}_0$. The fact that $\Xi$ is smooth is again a consequence of the smoothness of solutions of differential equations with respect to the initial conditions. From Remark \ref{rem:ag} we also obtain $s(\Xi(v_0,s_0,\psi_0))=\tilde{s}_0=s_0\in \R/\Z$. Now it follows from Remark \ref{rem:arg} that $\arg(\qq(0))=\theta_0-\frac{\alpha(v_0)}{2}$, even in the case when $v_0=0$. Therefore
\begin{equation*}\psi(\Xi(v_0,s_0,\psi_0))
=\arg(\qq(0))+\alpha(v_0)\tilde{s}_0
=\psi_0\in\R/2\Z.\end{equation*}
So $(v,s,\psi)\circ\Xi=\mathbb{I}$.

For the converse, let $(\qq,\pp)\in \widehat{Y}$. Let $(v_0,s_0,\psi_0)=(v,s,\psi)(\qq,\pp)$. So if $0$ is a point of maximum of $|\qq(t)|^2$ on a Reeb trajectory $(\qq(t),\pp(t))$ containing $(\qq,\pp)$, then $(\qq(G(v_0)\tilde{s}_0),\pp(G(v_0)\tilde{s}_0))=(\qq,\pp)$, for some lift $\tilde{s}_0\in\R$ of $s_0\in\R/\Z$. Moreover, $\psi_0=\arg(\qq(0))+\alpha(v_0)\tilde{s}_0$. Now let $r_0:=f|_{(-\underline{r},\underline{r})}^{-1}(v_0)=\sg(v_0) |\qq(G(v_0)/2)|$. From Remarks \ref{rem:ag} and \ref{rem:arg} we obtain \begin{align*}\qq(G(v_0)/2)&=r_0e^{i\left(\arg(\qq(0))+\frac{\alpha(v_0)}{2}\right)}=r_0 e^{i\left(\psi_0-\alpha(v_0)\tilde{s}_0+\frac{\alpha(v_0)}{2}\right)},\\
\pp(G(v_0)/2)&=\sqrt{1-\epsilon U(r_0^2)}i e^{i\left(\arg(\qq(0))+\frac{\alpha(v_0)}{2}\right)}=\sqrt{1-\epsilon U(r_0^2)}i e^{i\left(\psi_0-\alpha(v_0)\tilde{s}_0+\frac{\alpha(v_0)}{2}\right)}.\end{align*}
It follows from the uniqueness of solutions of differential equations that $(\qq(t),\pp(t))$ is the trajectory used to define $\Xi$ above. Therefore
\[\Xi(v_0,s_0,\psi_0)=(\qq(G(v_0)\tilde{s}_0),\pp(G(v_0)\tilde{s}_0))=(\qq,\pp).\]

Step 3: We will show that $\lambda-v\,d\tilde{\theta}=d(G(v)\tilde{s})$ on $N$ for every $N$ as defined in Step 1.

We first observe that it follows from Step 2 that $(v,\tilde{s},\tilde{\psi}):N\to (-M,M)\times\R^2$ is a smooth chart, where $\tilde{s}$ and $\tilde{\psi}$ are the lifts of $s$ and $\psi$ which are defined in Step 1.
 Since $\tilde{\psi}=\tilde{\theta}+\alpha(v)\tilde{s}$,
 \begin{equation}\label{eq:diffpsi}
 d\tilde{\psi}=d\tilde{\theta}+\tilde{s}\alpha'(v)dv +\alpha d\tilde{s}\in\Omega^1(N).
 \end{equation}

Now let $\eta=\lambda-v\,d\tilde{\theta}-d(G(v)\tilde{s})$. We will show that $\eta=0$ on $N$. We observe that $d\tilde{\theta}(R)=0$ and that $d(G(v) \tilde{s})(R)=1$ whence $\eta(R)=0$. Moreover $dv(R)=0$ so $\mathcal{L}_R\eta =d(\eta(R))+(d\lambda-dv\wedge d\tilde{\theta})(R,\cdot)=0$. Hence the Reeb flow preserves $\eta$.

Let $D\subset N$ be the set of all points $(\qq,\pp)\in N$ such that $|\qq|$ is the maximum of $|\qq(t)|$ for a Reeb trajectory $(\qq(t),\pp(t))$ through $(\qq,\pp)$. We now claim that $\eta=0$ on $D$. Following the usual notation, we will denote by $\partial/\partial v$ the vector field on $N$ satisfying \[dv(\partial/\partial v)=1\quad\text{and}\quad d\tilde{s}(\partial/\partial v)=d\tilde{\psi}(\partial/\partial v)=0.\]
Let $W$ be the vector field on $N$ defined by \[W=p_2\frac{\partial}{\partial q_1}-p_1\frac{\partial}{\partial q_2}+\epsilon U'(|\qq|^2)\Big(q_2\frac{\partial}{\partial p_1}-q_1\frac{\partial}{\partial p_2}\Big).\]
At a point $(\qq,\pp)\in D$, the vectors $\qq$ and $\pp$ are perpendicular, so the flow of $W$ preserves $\arg(\pp)$ and $\arg(\qq)$ and does not change $\tilde{s}$ and $\tilde{\psi}$. So $d\tilde{s}(W)|_D=d\tilde{\psi}(W)|_D=0$ and hence $\partial/\partial v$ is parallel to $W$ along $D$. So $\lambda(\partial/\partial v)|_D=\lambda(W)|_D=0$. Moreover since $\tilde{s}|_D=0$, it follows from~\eqref{eq:diffpsi} that $d\tilde{\theta}(\partial/\partial v)|_D=0$. We now let $X$ be the vector field on $N$ defined by
\[X=-p_2\frac{\partial}{\partial p_1}+p_1\frac{\partial}{\partial p_2}-q_2\frac{\partial}{\partial q_1}+q_1\frac{\partial}{\partial q_2}.\]
The flow of $X$ is the exponential of the $SO(2)$-action. Hence $d\tilde{\theta}(X)=1$ and $dv(X)=0$. 
Since the flow of $X$ preserves $D$, we have $d\tilde{s}(X)|_D=0$. So
\begin{align*}
\eta(R)|_D&=0,\\
\eta\bigg(\frac{\partial}{\partial v}\bigg)\Bigg|_D&=\lambda\bigg(\frac{\partial}{\partial v}\bigg)\Bigg|_D-\left(v\, d\tilde{\theta}\bigg(\frac{\partial}{\partial v}\bigg)+G(v) d\tilde{s}\bigg(\frac{\partial}{\partial v}\bigg)\right)\Bigg|_D=0,\\
\eta(X)|_D&=(\lambda(X)-v\, d\tilde{\theta}(X))|_D-G(v) d\tilde{s}(X)|_D=(v-v)|_D-0=0.
\end{align*}
Since $W$ and $X$ are linearly indepedent and $W|_D$ and $X|_D$ belong to $\ker(\lambda)|_D$, it follows that $R|_D$, $W|_D$ and $X|_D$ are linearly independent.
Therefore $\eta|_D=0$.
Since the Reeb flow preserves $\eta$, it follows that $\eta=0$ on $N$.

Step 4: We conclude the proof of the proposition.

Let $\bar{D}\subset N$ be the set of all points $(\qq,\pp)\in N$ such that $|\qq|^2$ is the minimum of $|\qq(t)|^2$ for a Reeb trajectory $(\qq(t),\pp(t))$ through $(\qq,\pp)$. Then $\tilde{s}(\qq,\pp)=1/2$, for every $(\qq,\pp)\in \bar{D}$. By~\eqref{eq:diffpsi}, $d\tilde{\theta}(\partial/\partial v)=-\alpha'(v)/2$ at all points on $\bar{D}$. As in Step 3, at $(\qq,\pp)\in \bar{D}$, the vector $W$ is parallel to $\partial/\partial v$. So $\lambda(\partial/\partial v)|_{\bar{D}}=0$. It follows from Step 3 that
\[\frac{1}{2}\alpha'(v)v=\big(\lambda-v\, d \tilde{\theta}\big)\bigg(\frac{\partial}{\partial v}\bigg)\Bigg|_{\bar{D}}=\big(G(v) d\tilde{s}+\tilde{s}G'(v)\,dv\big)\bigg(\frac{\partial}{\partial v}\bigg)\Bigg|_{\bar{D}}=\frac{1}{2}G'(v).\]
So $G'(v)=\alpha'(v)v$. It follows from~\eqref{eq:diffpsi} that
\[\lambda= v\,d\tilde{\theta}+G(v) d\tilde{s}+\tilde{s}G'(v)\,dv=
\big(G(v)-\alpha(v)v)d\tilde{s}+ v\,d\tilde{\psi}.\]
Since this equation holds for all $N$, and since $d\tilde{s}=ds$ and $d\tilde{\psi}=d\psi$,
\begin{equation*}\label{eq:lambda}
\lambda=\big(G(v)-\alpha(v) v\big)ds+ v\,d\psi.
\end{equation*}
 \end{proof}

 We can now define the toric coordinates $(\phi_1,\phi_2)$ as follows.
\begin{definition}\label{def:coord}For $\xx\in\widehat{Y}$ we let
 \begin{align*}
\phi_1(\xx)&= s(\xx)-\frac{\psi(\xx)}{2\pi}\in\R/\Z,\\
\phi_2(\xx)&= \frac{\psi(\xx)}{2\pi}\in \R/\Z.
\end{align*}
\end{definition}
The following corollary is a straight-forward consequence of Proposition \ref{prop:diffeo}.
\begin{cor}\label{cor:diffeo}
The function $(v,\phi_1,\phi_2):\widehat{Y}\to(-M,M)\times (\R/\Z)^2$ is a diffeomorphism. Moreover,
\[\lambda= \big(G(v)-\alpha(v)v\big)d\phi_1+\big(G(v)+(2\pi-\alpha(v)) v\big)d\phi_2.\]
\end{cor}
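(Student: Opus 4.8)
The plan is to deduce Corollary~\ref{cor:diffeo} directly from Proposition~\ref{prop:diffeo} by a purely algebraic change of coordinates, so there is essentially no new geometric content and no serious obstacle. First I would observe that the map
\[
(v,\phi_1,\phi_2)=\Bigl(v,\;s-\tfrac{\psi}{2\pi},\;\tfrac{\psi}{2\pi}\Bigr)
\]
is obtained from the diffeomorphism $(v,s,\psi):\widehat Y\to(-M,M)\times\R/\Z\times\R/2\pi\Z$ of Proposition~\ref{prop:diffeo} by post-composing with the map $(-M,M)\times\R/\Z\times\R/2\pi\Z\to(-M,M)\times(\R/\Z)^2$ sending $(v,s,\psi)\mapsto(v,\,s-\psi/2\pi,\,\psi/2\pi)$. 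This latter map is a diffeomorphism: its inverse is $(v,\phi_1,\phi_2)\mapsto(v,\,\phi_1+\phi_2,\,2\pi\phi_2)$, and both expressions are well defined on the stated quotients (dividing $\psi\in\R/2\pi\Z$ by $2\pi$ lands in $\R/\Z$, and $\phi_1+\phi_2\in\R/\Z$ maps correctly back to $s\in\R/\Z$). Composing two diffeomorphisms gives that $(v,\phi_1,\phi_2):\widehat Y\to(-M,M)\times(\R/\Z)^2$ is a diffeomorphism.

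For the formula for $\lambda$, I would simply substitute. From Definition~\ref{def:coord} we have $s=\phi_1+\phi_2$ and $\psi=2\pi\phi_2$, hence $ds=d\phi_1+d\phi_2$ and $d\psi=2\pi\,d\phi_2$. Plugging these into the expression from Proposition~\ref{prop:diffeo},
\[
\lambda=\bigl(G(v)-\alpha(v)v\bigr)\,ds+v\,d\psi
=\bigl(G(v)-\alpha(v)v\bigr)(d\phi_1+d\phi_2)+2\pi v\,d\phi_2,
\]
and collecting the $d\phi_1$ and $d\phi_2$ terms gives
\[
\lambda=\bigl(G(v)-\alpha(v)v\bigr)\,d\phi_1+\bigl(G(v)-\alpha(v)v+2\pi v\bigr)\,d\phi_2
=\bigl(G(v)-\alpha(v)v\bigr)\,d\phi_1+\bigl(G(v)+(2\pi-\alpha(v))v\bigr)\,d\phi_2,
\]
which is exactly the claimed identity. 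One minor point worth a sentence of care: since $ds$ and $d\psi$ are genuine closed $1$-forms on $\widehat Y$ (the functions $s,\psi$ are valued in circles, but their differentials are globally defined $1$-forms, as used already in Proposition~\ref{prop:diffeo}), the same is true of $d\phi_1$ and $d\phi_2$, so the displayed identity is an equality of honest $1$-forms on $\widehat Y$. The only thing to verify is that nothing about the quotients causes trouble, and the linear change of variables above has integer coefficients in both directions, so it descends to the tori without issue; hence there is no real obstacle and the corollary follows immediately.
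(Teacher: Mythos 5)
Your proof is correct and is exactly the ``straight-forward consequence'' the paper alludes to: post-compose the diffeomorphism of Proposition~\ref{prop:diffeo} with the invertible integer-coefficient linear change of torus coordinates, and substitute $ds=d\phi_1+d\phi_2$, $d\psi=2\pi\,d\phi_2$ into the formula for $\lambda$. Nothing more is needed.
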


\subsection{The extension to $Y$}\label{sub:ext}

We now define $\widehat{\Phi}:\widehat{Y}\to\C^2$ by \begin{equation}\label{eq:defpsi}\widehat{\Phi}(\xx)=\left(\sqrt{\frac{\rho_1(\xx)}{\pi}} \,e^{2\pi i\phi_1(\xx)},\sqrt{\frac{\rho_2(\xx)}{\pi}}\, e^{2\pi i\phi_2(\xx)}\right),\end{equation}
where $\rho_1(\xx)=(G(v)-\alpha(v)v)(\xx)$ and $\rho_2(\xx)=(G(v)+(2\pi-\alpha(v))v)(\xx)$. In the following technical lemma, we will show that $\widehat{\Phi}$ is well-defined and that it extends to an embedding of $Y$ into $\C^2$. 
\begin{lemma}\label{lem:Phi}
 The functions $\rho_1$ and $\rho_2$ are positive functions and they extend to $Y$ such that $\rho_1|_{C_+}=0$ and $\rho_2|_{C_-}=0$. Moreover $\widehat{\Phi}$ can be extended to a smooth embedding $\Phi:Y\to\C^2$ satisfying $\Phi^*\lambda=\lambda$.
  \end{lemma}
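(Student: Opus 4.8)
The plan is to analyze the functions $G$ and $\alpha$ near the endpoints $v=\pm M$ of the interval $(-M,M)$ and use this to understand the behavior of $\rho_1,\rho_2$ and the map $\widehat\Phi$ there. First I would establish the positivity of $\rho_1$ and $\rho_2$ on $\widehat Y$: by Corollary \ref{cor:diffeo}, $\rho_1 d\phi_1+\rho_2 d\phi_2=\lambda$, and since $\lambda$ is a contact form, evaluating against the two circle-action vector fields $\partial/\partial\phi_1$ and $\partial/\partial\phi_2$ shows that $(\rho_1,\rho_2)$ is never zero; a geometric argument (as in \S\ref{sub:bil}, where $\rho_1=G(v)-\alpha(v)v$ is the projection of $\lambda$ onto the billiard-segment direction and $\rho_2$ onto the rotational direction) gives that both are in fact strictly positive. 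Then I would identify the limiting values: as $v\to M$, the Reeb orbits degenerate onto the circle $C_+$, which is a closed orbit of the pure rotation, so the ``billiard segment'' shrinks to a point; in the $\epsilon\to 0$ heuristic this corresponds to $\alpha\to 2\pi$ and $G\to 2\pi$ — i.e. $\rho_1=G-\alpha v\to 2\pi-2\pi=0$ while $\rho_2=G+(2\pi-\alpha)v\to 2\pi>0$. Symmetrically $\rho_2\to 0$, $\rho_1\to 4\pi>0$ as $v\to -M$. This requires computing the limits of $G(v)$ and $\alpha(v)$ as $v\to\pm M$, which I would do by analyzing the ODE \eqref{eq:ddot} near the circular orbit $\underline\qq$ from \eqref{eq:circ}: linearizing about $r=\bar r$ gives the period $G(M)/2$ of the radial oscillation and the angular displacement $\underline\alpha(M)$ per half-period, and one checks these are $2\pi$ and $2\pi$ respectively (matching the fact that $C_+$ is a genuine periodic orbit of rotation with $v=M$), while the corresponding values at $v=-M$ are $2\pi$ and $0$ by the $\qq\mapsto\bar\qq$ symmetry.

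Next I would upgrade the limiting behavior to smoothness of the extension. The key point is that $(v,\phi_1,\phi_2)$ are ``polar-type'' coordinates: $\phi_2=\psi/2\pi$ and $\rho_2$ both extend smoothly across $C_-$ where $\rho_2\to 0$ provided $(\rho_2,\phi_2)$ behave like polar coordinates, i.e. $\rho_2$ vanishes to first order in a transverse parameter and the $\phi_2$-circle has the correct (rotation) period — this is exactly the content of $C_-=v^{-1}(-M)$ being a circle that is a single Reeb orbit. So near $C_-$ I would introduce $\zeta_2=\sqrt{\rho_2/\pi}\,e^{2\pi i\phi_2}$ and show, using the action-angle picture and the smooth dependence of solutions of \eqref{eq:ddot} on initial data (already invoked in Proposition \ref{prop:diffeo}), that $\zeta_2$ extends smoothly and is nonzero on $C_-$ in the sense that $\rho_2$ has a nondegenerate minimum there; meanwhile $\rho_1,\phi_1$ stay in the good range ($\rho_1$ bounded away from $0$), so the first coordinate $\zeta_1=\sqrt{\rho_1/\pi}\,e^{2\pi i\phi_1}$ extends trivially. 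Symmetrically near $C_+$. Gluing the chart away from $C_\pm$ (where Corollary \ref{cor:diffeo} already gives a diffeomorphism onto $(-M,M)\times(\R/\Z)^2$, hence $\widehat\Phi$ is a well-defined immersion) with these two boundary charts shows $\Phi:Y\to\C^2$ is a smooth embedding onto $\partial X_{\Omega_\epsilon}$. Finally, $\Phi^*\lambda=\lambda$ on $\widehat Y$ is immediate from Corollary \ref{cor:diffeo} together with the formula $\lambda_{\mathrm{std}}=\sum\tfrac12(x_i dy_i-y_i dx_i)=\sum\rho_i\,d\phi_i$ in the coordinates $\zeta_i=\sqrt{\rho_i/\pi}e^{2\pi i\phi_i}$ (note $\pi|\zeta_i|^2=\rho_i$), and it persists on all of $Y$ by continuity.

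I expect the main obstacle to be the \emph{smoothness of the extension across the Hopf circles} $C_\pm$, rather than the computation of the limiting values. On $\widehat Y$ everything is governed by the clean formula of Corollary \ref{cor:diffeo}, but at $v=\pm M$ the coordinate $(v,s,\psi)$ degenerates (the torus collapses to a circle) exactly in the way polar coordinates degenerate at the origin, and one must verify that $G(v)$ and $\alpha(v)$ — defined via return times and angular increments of the ODE — extend smoothly (not merely continuously) up to $v=\pm M$, with the right derivatives so that $\sqrt{\rho_i/\pi}\,e^{2\pi i\phi_i}$ is smooth. This is a standard-but-delicate normal-form argument: one writes the Reeb flow near $C_+$ in suitable coordinates, uses that $C_+$ is a nondegenerate (elliptic) closed orbit with rotation number tending to $2\pi$, and appeals to smooth dependence on parameters for \eqref{eq:ddot}; I would organize it by first proving $G,\alpha\in C^\infty((-M,M))$ extend to $C^\infty$ near $\pm M$ with $G(M)=\alpha(M)=2\pi$, then invoking the polar-coordinate criterion. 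The relation $G'(v)=\alpha'(v)v$ from Step 4 of Proposition \ref{prop:diffeo} is a useful consistency check here, since it forces $\rho_1'=-\alpha v\cdot(\text{stuff})$ to have the correct order of vanishing at $v=M$.
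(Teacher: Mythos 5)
Your outline correctly identifies the overall structure — reduce to $C_+$ by the symmetry $(\qq,\pp)\mapsto(\qq,-\pp)$, compute limiting values of $G$ and $\alpha$ to show $\rho_1\to 0$, and then argue that the collapse of the $\phi_1$-circle is of ``polar-coordinate type.'' But there is a concrete error and a genuine gap.

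The concrete error: you assert that for the fixed-$\epsilon$ domain, $G(M)=\alpha(M)=2\pi$ at $C_+$. This is the $\epsilon\to0$ heuristic, not the fixed-$\epsilon$ fact. For fixed $\epsilon$, the paper's computation gives $G\circ v|_{C_+}=\pi K(\bar u)/W'(\bar u)$ and $\alpha\circ v|_{C_+}=\pi M/(\bar u W'(\bar u))$, and a short calculation shows $\alpha(M)=\pi\sqrt{2U'(\bar u)/(2U'(\bar u)+\bar u U''(\bar u))}<\pi$ — nowhere near $2\pi$. (Incidentally, $G(M)=\alpha(M)=2\pi$ would force $M=1$ from $G(M)=\alpha(M)M$, which is false.) What \emph{is} true, and what your linearization plan would have detected if carried out, is the identity $K(\bar u)=M^2/\bar u$ forced by the critical-point condition $F'(\bar u)=0$; this is what makes $\rho_1(M)=G(M)-\alpha(M)M=0$ regardless of $\epsilon$. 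So the conclusion $\rho_1|_{C_+}=0$ is right, but the stated mechanism is not.

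The gap: you defer the smoothness of $\sqrt{\rho_i/\pi}\,e^{2\pi i\phi_i}$ across $C_\pm$ to ``a standard-but-delicate normal-form argument'' via the polar-coordinate criterion. This is precisely where the bulk of the paper's proof (Steps 5--9) lives, and it is not automatic. The difficulty is that $\phi_1=s-\psi/2\pi$ mixes the degenerating angle $s$ (return-time fraction) with the nondegenerating angle $\psi$, and one must show \emph{separately} that (i) $\rho_1=A^2h^2$ with $h$ smooth and nonvanishing (the paper's \eqref{eq:h}, obtained by extracting $A^2$ from the integral \eqref{eq:sumin}), (ii) $\psi$ extends smoothly (Step 7, via the double-integral $D(\xx)$ and a nontrivial cancellation of the $\sg(\xi_2)$ factor term-by-term in the Taylor expansion), and (iii) $Ae^{2\pi i s}$ extends smoothly (Step 8, again by Taylor expansion matched against the Morse--Bott coordinates $\xi_1,\xi_2$ with $A^2=\xi_1^2+\xi_2^2$ from Step 6). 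Your proposal collapses all of this into ``$\rho_2$ has a nondegenerate minimum'' and ``appeal to the polar-coordinate criterion,'' which is the right intuition but leaves out the nonstandard part: controlling the angle $s$, which is defined via a return map of the ODE and whose interaction with $\rho_1$ is governed by the elliptic integrals \eqref{eq:si}--\eqref{eq:psii}. Similarly, your positivity argument (``evaluate $\lambda$ against the circle-action vector fields'') does not by itself yield strict positivity of both $\rho_1$ and $\rho_2$; the paper instead derives the explicit formula \eqref{eq:sumin} with a manifestly positive integrand.

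So: right scaffolding, wrong limiting values, and the technical core — which is most of the proof — is still missing.
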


\begin{proof}
Step 1: We first assume that $\widehat{\Phi}$ can be extended to a smooth embedding $\Phi$. We claim that $\Phi^*\lambda=\lambda$.

We can write
\[\lambda=\frac{1}{2}\sum_{i=1}^2 r_i^2 d\theta_i,\]
where $z_j=r_je^{i\theta_j}$ are the coordinates of $\C^2$. It follows from \eqref{eq:defpsi} and from Corollary \ref{cor:diffeo} that
\[\widehat{\Phi}^*\lambda=\frac{1}{2}\sum_{i=1}^2 \frac{\rho_i}{\pi} d(2\pi \phi_i)=\sum_{i=1}^2 \rho_i d\phi_i=\lambda. \]
By continuity, we conclude that $\Phi^*\lambda=\lambda$ on all of $Y$.

Step 2: We now reduce the rest of the proof to showing that $\rho_1(\xx)$ and $\rho_2(\xx)$ are positive if $v(\xx)\ge 0$, that $\rho_1|_{C_+}=0$ and that $\widehat{\Phi}$ smoothly extends to $C_+$.

Let $\xx=(\qq,\pp)\in \widehat{Y}$ and let $\tilde{\xx}=(\qq,-\pp)$. So $v(\tilde{\xx})=-v(\xx)$. We claim that $G\circ v (\tilde{\xx})=G\circ v(\xx)$ and $\alpha\circ v (\tilde{\xx})=2\pi-\alpha\circ v(\xx)$. Indeed if $(\qq(t),\pp(t))$ is a Reeb trajectory going through $\xx$, then $(\qq(-t),-\pp(-t))$ is a Reeb trajectory going through $\tilde{\xx}$. If $t_0<t_1$ are consecutive points of maximum of $|\qq(t)|$ then $-t_1<-t_0$ are two consecutive points of maximum of $|\qq(-t)|$. So \[G\circ v(\xx)=t_1-t_0=(-t_0)-(-t_1)=G\circ v(\tilde{\xx}).\]
Moreover $\arg(\qq(t_1))-\arg(\qq(t_0))=-(\arg(-\qq(-t_0))-\arg(-\qq(-t_1)))\in\R/\Z$. This implies that $\alpha\circ v(\xx)\equiv-\alpha\circ v(\tilde{\xx})\;(\text{mod } 2\pi)$. Since $\alpha(0)=\pi$, it follows that $\alpha\circ v(\tilde{\xx})=2\pi-\alpha\circ v(\xx)$. 
Hence we obtain:
\begin{equation}
\begin{aligned}
\rho_1(\tilde{\xx})&=G\circ v(\xx)-\left(2\pi -\alpha\circ v(\xx)\right)(-v(\xx))\\
&=G\circ v(\xx)+(2\pi-\alpha\circ v(\xx))v(\xx)=\rho_2(\xx).
\end{aligned}\label{eq:mv}
\end{equation}
It follows from~\eqref{eq:mv} that if $\rho_1(\xx)$ and $\rho_2(\xx)$ are positive for $v(\xx)>0$, then they are also positive for $v(\xx)<0$. Moreover if $\rho_1|_{C_+}=0$, then $\rho_2|_{C_-}=0$.

By a similar reasoning as above, we can deduce that:
\begin{equation*}
s(\tilde{\xx})=s(\xx),\qquad \psi(\tilde{\xx})=2\pi s(\xx)-\psi(\xx).
\end{equation*}
Hence
\begin{equation}\label{eq:phi12}
\phi_1(\tilde{\xx})=\phi_2(\xx).
\end{equation}
From \eqref{eq:mv} and \eqref{eq:phi12} we conclude that $\widehat{\Phi}$ can be smoothly extended to $C_+$ if, and only if, it can be smoothly extended to $C_-$.

Step 3: We will express the functions $s$, $\psi$, $G$ and $\alpha$ as integrals in preparation for the next steps.

Let $\xx\in \widehat{Y}$ such that $v(\xx)>0$ and let $(\qq(t),\pp(t))$ be a Reeb trajectory such that $\xx=(\qq(t_{\xx}),\pp(t_{\xx}))$, for some $t_\xx$. We also let $t_0$ the largest point of maximum of $|\qq(t)|^2$ with $t_0< t_{\xx}$.

Let $t_1$ be the following critical point of $|\qq(t)|^2$, i.e., $t_1>t_0$ and there is no critical point between $t_0$ and $t_1$. Then $t_1$ is a point of minimum of $|\qq(t)|^2$. Now let $r(t)$ and $\theta(t)$ be the polar coordinates of $\qq(t)$ and let $r_i=r(t_i)$, for $i=0,1$. 
Recall the function $f(r)=r\sqrt{1-\epsilon U(r^2)}$. So $v(\xx)=f(r_0)=f(r_1)$. From \eqref{eq:K2} it follows that \[v(\xx)=\qq(t)\times \pp(t)=K(r(t)^2) \qq(t)\times\dot{\qq}(t)= K(r(t)^2) r(t)^2\dot{\theta}(t),\] where $K(u)=\frac{1}{2}(1-\epsilon U(u)+\epsilon U'(u)u)$. So
\begin{equation}\label{eq:dottheta}\dot{\theta}=\frac{v(\xx)}{K(r^2)r^2}=\frac{f(r_0)}{K(r^2)r^2}.\end{equation}
From the first equation of~\eqref{eq:polar} and~\eqref{eq:dottheta}, we obtain:
\begin{equation}\label{eq:rddot1}
K(r^2)^2\ddot{r}+2K(r^2)K'(r^2) r (\dot{r})^2=\frac{f(r_0)^2}{r^3}-\epsilon U' (r^2)r.\end{equation}

We will first assume that $t_0<t_\xx\le t_1$. This is equivalent to requiring that $s(\xx)\in(0,1/2]$. So $\dot{r}<0$ in $(t_0,t_1)$, which implies that $r$ is invertible in $(t_0,t_1)$. So we can write $\dot{r}$ as a function of $r$ in $(t_0,t_1)$. Integrating~\eqref{eq:rddot1} in $(r_0,r_1)$ and using $\dot{r}(t_0)=0$, we obtain:
\begin{equation*}(K(r^2)\dot{r})^2=\frac{f(r)^2-f(r_0)^2}{r^2}.\end{equation*} Therefore
\begin{equation}\label{eq:rdot}\dot{r}=-\frac{\sqrt{f(r)^2-f(r_0)^2}}{K(r^2)r}.\end{equation}
If we write $u=r^2$, then \eqref{eq:rdot} is equivalent to:
\begin{equation}\label{udot}
\dot{u}=-2\frac{\sqrt{F(u)-F(u_0)}}{K(u)},
\end{equation}
where $F(u)=u(1-\epsilon U(u))$ and $u_0=r_0^2$. Let $\bar{u}$ be the (unique) positive critical point of $F$. So \begin{equation*}\label{eq:epsu}1-\epsilon U(\bar{u})-\epsilon U'(\bar{u})\bar{u}=0\end{equation*} and $\bar{u}=\bar{r}^2$, where $\bar{r}>0$ is the radius of the circles $C_+$ and $C_-$. Note also that $F(\bar{u})=M^2=f(\bar{r})^2$ and that $v(\xx)=f(r_0)=\sqrt{F(u_0)}$. Let $u_\xx=u(t_\xx)$.
Since $\dot{u}<0$ in $(t_0,t_1)$, it follows that $u$ is invertible in $(t_0,t_1)$.

 It follows from  \eqref{eq:dottheta} and \eqref{udot} that:
\begin{align}
\label{eq:s2}
&\begin{aligned}
s(\xx)&=\frac{t_\xx-t_0}{G\circ v(\xx)}=\frac{1}{G\circ v(\xx)}\int_{u_0}^{u_\xx} t'(u)\,du\\
&=\frac{1}{2G\circ v(\xx)}\int_{u_\xx}^{u_0}\frac{K(u)}{\sqrt{F(u)-F(u_0)}}\,du,\end{aligned}
\\
&\label{eq:psi2}
\begin{aligned}
\psi(\xx)&=\arg(\qq(t_0))+\frac{\alpha\circ v(\xx)}{G\circ v(\xx)}\cdot (t_\xx-t_0)\\&=\arg(\qq(t_\xx))-\int_{t_0}^{t_\xx} \dot{\theta}(t)\,dt+\frac{\alpha\circ v(\xx)}{G\circ v(\xx)}\int_{u_0}^{u_\xx} t'(u)\,du\\
&=\arg(\qq(t_\xx))-\int_{u_\xx}^{u_0}\frac{v(\xx)}{2u\sqrt{F(u)-F(u_0)}}\,du+\frac{\alpha\circ v(\xx)}{2G\circ v(\xx)}\int_{u_\xx}^{u_0}\frac{K(u)}{\sqrt{F(u)-F(u_0)}}\,du.
\end{aligned}
\end{align}

Since $F'(\bar{u})=0$, we can write
\begin{equation*}
F(u)=F(\bar{u})-H(u)(u-\bar{u})^2,
\end{equation*}
where $H:(u_1,u_0)\to \R$ is a smooth function. Moreover, $H>0$ since $F(\bar{u})$ is the maximum of $F$ and $H(\bar{u})=-\frac{1}{2}F''(\bar{u})>0$. 
For $u\in(u_1,u_0)$, let $W(u):=(H(u))^{1/2}(u-\bar{u})$ and let $A(\xx)=\sqrt{F(\bar{u})-F(u_0)}=W(u_0)=-W(u_1)$. We note that $W'(u)=-F'(u)/(2W(u))>0$ for $u\neq \bar{u}$ and $W'(\bar{u})=\sqrt{H(\bar{u})}=\sqrt{-\frac{1}{2}F''(\bar{u})}>0$. So $W$ is a diffeomorphism onto $(-A(\xx),A(\xx))$. We change variables in \eqref{eq:s2} and \eqref{eq:psi2} by letting $\zeta=\arcsin\left(\frac{1}{A(\xx)}W(u)\right)$. Let $\zeta_\xx=\arcsin\left(\frac{1}{A(\xx)}W(u_\xx)\right)$.

Therefore we conclude that if $s(\xx)\in(0,1/2]\subset\R/\Z$, then
\begin{align}
s(\xx)&=\frac{1}{2G(v(\xx))}\int_{\zeta_{\xx}}^{\pi/2}\frac{K\circ W^{-1}}{W'\circ W^{-1}}(A(\xx)\sin\zeta)\,d\zeta\label{eq:si},\\
\psi(\xx)&=\arg(\qq(t_\xx))-\frac{v(\xx)}{2} \int_{\zeta_\xx}^{\pi/2}\frac{1}{W^{-1}\cdot (W'\circ W^{-1})}(A(\xx)\sin\zeta)\,d\zeta\nonumber\\&+\frac{\alpha\circ v(\xx)}{2G\circ v(\xx)}\int_{\zeta_\xx}^{\pi/2}\frac{K\circ W^{-1}}{W'\circ W^{-1}}(A(\xx)\sin\zeta)\,d\zeta\label{eq:psii}.
\end{align}

Now we assume that $t_1<t_\xx\le 2t_1-t_0$ which implies that $s(\xx)\in(1/2,1]\subset\R/\Z$. By an analogous calculation and using Remark \ref{rem:ag}, we obtain
\begin{align}
s(\xx)&=\frac{t_\xx-t_0}{G\circ v(\xx)}=1-\frac{(2t_1-t_0)-t_\xx}{G\circ v(\xx)}\nonumber\\
&=1-\frac{1}{G\circ v(\xx)}\int_{u_0}^{u_\xx} t'(u)\,du\nonumber\\
&=-\frac{1}{2G\circ v(\xx)}\int_{\zeta_{\xx}}^{\pi/2}\frac{K\circ W^{-1}}{W'\circ W^{-1}}(A(\xx)\sin\zeta)\,d\zeta.
\label{eq:si2}
\end{align}
We note that the last equality holds since $s(\xx)\in\R/\Z$.

Similarly, if $s(\xx)\in(1/2,1]$, then:
\begin{align}
\psi(\xx)&=\arg(\qq(t_\xx))-\int_{t_1}^{t_\xx} \dot{\theta}(t)\,dt+\alpha\circ v(\xx) \frac{1-(2t_1-t_0-t_\xx)}{G\circ v(\xx)}
\nonumber\\
&=
\arg(\qq(t_\xx))-\left(-\int_{t_0}^{t_\xx} \dot{\theta}(t)\,dt+\frac{\alpha\circ v(\xx)}{G\circ v(\xx)}\int_{u_0}^{u_\xx} t'(u)\,du\right)\nonumber\\
&=\arg(\qq(t_\xx))-\bigg(-\frac{v(\xx)}{2} \int_{\zeta_\xx}^{\pi/2}\frac{1}{W^{-1}\cdot (W'\circ W^{-1})}(A(\xx)\sin\zeta)\,d\zeta\nonumber\\
&\qquad\qquad\qquad\qquad+\frac{\alpha\circ v(\xx)}{2G\circ v(\xx)}\int_{\zeta_\xx}^{\pi/2}\frac{K\circ W^{-1}}{W'\circ W^{-1}}(A(\xx)\sin\zeta)\,d\zeta\bigg).\label{eq:psii2}
\end{align}

We will now write integral formulas for $\alpha\circ v$ and $G\circ v$. Since $v(\xx)>0$, it follows from Remark \ref{rem:ag} that $\alpha\circ v(\xx)=2\int_{t_0}^{t_1}\dot{\theta}(t)dt$. So, by a similar calculation to the one above, we obtain:
\begin{align}
\alpha\circ v(\xx)&=v(\xx)\int_{-\pi/2}^{\pi/2}\frac{1}{W^{-1}\cdot (W'\circ W^{-1})}(A(\xx)\sin\zeta)\,d\zeta,\label{eq:alpha}\\
G\circ v(\xx)&=\int_{-\pi/2}^{\pi/2}\frac{K\circ W^{-1}}{W'\circ W^{-1}}(A(\xx)\sin\zeta)\,d\zeta.\label{eq:G}
\end{align}

Step 4: We now show that $\rho_1(\xx)>0$ and $\rho_2(\xx)>0$ for all $\xx$ satisfying $v(\xx)\ge 0$.

Let $\xx\in Y$ and let $v=v(\xx)$. Assume that $v(\xx)>0$. Let $\zeta\in(-\pi/2,\pi/2)$ and $u=W^{-1}(A(\xx)\sin\zeta)$. It follows directly from the definition of $K$ and $F$ that $K(u)=(1-\epsilon U(u))-F'(u)/2$. So
\begin{align*}
\frac{K(u)}{W'(u)}&=\left(\frac{F(u)}{u}-\frac{1}{2}F'(u)\right)\cdot\frac{1}{W'(u)}=\frac{M^2-W(u)^2}{uW'(u)}-\frac{F'(u)}{2W'(u)} .\end{align*}
Since $W(u)^2=M^2-F(u)$, it follows that $2W(u)W'(u)=-F'(u)$ and hence
\begin{equation}\label{eq:K}\frac{K(u)}{W'(u)}= \frac{M^2-W(u)^2}{u\cdot W'(u)}+W(u).\end{equation}
So
\begin{equation}\label{eq:diff1}
\begin{aligned}
\frac{K(u)}{W'(u)}-\frac{v}{uW'(u)}\cdot v&=\frac{M^2-W(u)^2-v^2}{u\cdot W'(u)}+W(u)\\&=\frac{A(\xx)^2\cos^2\zeta}{(W^{-1}\cdot (W'\circ W^{-1}))(A(\xx)\sin\zeta)}+A(\xx)\sin\zeta.
\end{aligned}
\end{equation}
Therefore from \eqref{eq:alpha}, \eqref{eq:G} and \eqref{eq:diff1}, we obtain:
\begin{align}
\rho_1(\xx)=G(v)-\alpha(v)v&=\int_{-\pi/2}^{\pi/2}\left( \frac{A(\xx)^2\cos^2\zeta}{(W^{-1}\cdot (W'\circ W^{-1}))(A(\xx)\sin\zeta)}+A(\xx)\sin\zeta\right)\,d\zeta\nonumber\\
&=\int_{-\pi/2}^{\pi/2}\frac{A(\xx)^2\cos^2\zeta}{(W^{-1}\cdot (W'\circ W^{-1}))(A(\xx)\sin\zeta)} \,d\zeta
.\label{eq:sumin}
\end{align}
The integral in \eqref{eq:sumin} is strictly positive, since the integrand is strictly positive in $(-\pi/2,\pi/2)$. Therefore $\rho_1(\xx)>0$ if $v(\xx)>0$. Moreover, if $v(\xx)=0$, then $A(\xx)^2=M^2$. By continuity, it follows from \eqref{eq:sumin} that
\begin{equation*}\label{eq:rho1}\rho_1(\xx)=\int_{-\pi/2}^{\pi/2} \frac{M^2\cos^2\zeta}{(W^{-1}\cdot (W'\circ W^{-1}))(M\sin\zeta)}\,d\zeta>0.\end{equation*}
Finally, since we are assuming that $v(\xx)\ge 0$, we obtain:
\[\rho_2(\xx)=\rho_1(\xx)+2\pi v(\xx)\ge \rho_1(\xx)>0.\]

Step 5: We will prove that $G\circ v$ and $\alpha\circ v$ can be smoothly extended to $C_+$ and that $\rho_1|_{C_+}=0$.

Let $n>1$ be a natural number. Since $\frac{K\circ W^{-1}}{W'\circ W^{-1}}$ is smooth, it follows that there exist $c_1,\dots,c_{2n+1}\in\R$, such that
\begin{equation*}
\frac{K\circ W^{-1}}{W'\circ W^{-1}}(y)=\frac{K(\bar{u})}{W'(\bar{u})}+\sum_{j=1}^{2n+1} c_j y^j+O(y^{2n+2}),
\end{equation*}
where $O$ is a continuous function satisfying $|O(y)^{2n+2}|\le cy^{2n+2}$, for some constant $c>0$ and for $y$ sufficiently small.
So
\begin{align}
G\circ v(\xx)&=\frac{\pi K(\bar{u})}{W'(\bar{u})}+\sum_{j=1}^{2n+1}\int_{-\pi/2}^{\pi/2} c_j A(\xx)^j\sin^j \zeta\,d\zeta+\int_{-\pi/2}^{\pi/2} O((A(\xx)\sin\zeta)^{2n+2})\,d\zeta\nonumber\\
&=\frac{\pi K(\bar{u})}{W'(\bar{u})}+\sum_{j=1}^{n}\int_{-\pi/2}^{\pi/2} c_{2j} A(\xx)^{2j}\sin^{2j} \zeta\,d\zeta+O(A(\xx)^{2n+2})\label{eq:sumGv}.
\end{align}
Since $A(\xx)^2=M^2-v(\xx)^2$, it follows that $A^2$ can be smoothly extended to $C_+$. Let $\xx_+\in C_+$ and let $\mathcal{N}$ be a neighborhood of $\xx_+$. So there exists a constant $c>0$ such that $A(\xx)^2\le c|\xx-\xx_+|$ for all $\xx\in \mathcal{N}$. Hence $O(A(\xx)^{2n+2})\le O\left(|\xx-\xx_+|^{n+1}\right).$
Therefore the sum \eqref{eq:sumGv} defines a function on $\mathcal{N}$ which is $n$ times differentiable. Since $n$ and $\xx_+$ were arbitrary, it follows that $G\circ v$ can be smoothly extended to $C_+$ so that $G\circ v|_{C_+}=\frac{\pi K(\bar{u})}{W'(\bar{u})}$.

Analogously, $\alpha\circ v$ can be smoothly extended to $C_+$ so that $\alpha\circ v|_{C_+}=\frac{\pi M}{\bar{u}\cdot W'(\bar{u})}$. Moreover, using \eqref{eq:sumin} and proceeding as above, we conclude that there exists a smooth real function $\tilde{h}$ satisfying $\tilde{h}(0)=0$ such that
\begin{align*}\rho_1(\xx)&=A(\xx)^2\left(\frac{M^2}{\bar{u}W'(\bar{u})}\int_{-\pi/2}^{\pi/2} \cos^2\zeta\,d\zeta+\tilde{h}(A(\xx)^2)\right)\\&=A(\xx)^2\left(\frac{M^2\pi}{2\bar{u}W'(\bar{u})}+\tilde{h}(A(\xx)^2)\right).\end{align*}
So there exists a smooth function $h:Y \to \R$, such that
\begin{equation}\label{eq:h}
\sqrt{\rho_1(\xx)}=A(\xx) h(\xx).
\end{equation}
In particular, $\rho_1|_{C_+}=0$.

Step 6: We will now define smooth functions $\xi_1$ and $\xi_2$ in a neighborhood of $C_+$, such that $A(\xx)^2=\xi_1(\xx)^2+\xi_2(\xx)^2$.

We first define a function $\Upsilon$ as follows. 
\[\begin{array}{cccl}
\Upsilon:&C_+\times D^2&\to& Y\\ 
&\left((\qq,\pp),y,z\right)&\mapsto&\left((1+y) \qq,\displaystyle \sqrt{\frac{1-\epsilon U((1+y)^2\bar{u})}{1-\epsilon U(\bar{u})}} e^{iz} \pp\right).
\end{array}\]
Here $D^2$ denotes a disk with a small radius. We observe that $\Upsilon$ is a diffeomorphism onto a neighborhood $\mathcal{N}$ of $C_+$ provided that the radius of the disk is small enough.
For $\xx\in\mathcal{N}$, we let $\xi_1(\xx)=W(u_\xx)=W((1+y)^2\bar{u})$. So $\xi_1$ is smooth.

Now we consider the function $L(\xx)=A(\xx)^2-\xi_1(\xx)^2$. By a simple calculation we obtain:\begin{equation*}
L\circ\Upsilon((\qq,\pp),y,z)=F((1+y)^2\bar{u})-v(\Upsilon((\qq,\pp),y,z)))^2=F((1+y)^2\bar{u})\sin^2 z.
\end{equation*}
Since $|z|$ is small, the set of critical points is $\{z=0\}\cap (C_+\times D^2)$ which is a smooth submanifold of $\mathcal{N}$. Moreover
\begin{equation*}
\frac{\partial^2(L\circ\Upsilon)}{\partial z^2}((\qq,\pp),y,0)=2F((1+y)^2\bar{u})>0.
\end{equation*}
So $L$ is a Morse-Bott function. Therefore there exists a smooth function $\xi_2$ defined in a neighborhood of the critical set (which we could assume to be $\mathcal{N}$ by possibly shrinking it) so that $L(\xx)=\xi_2(\xx)^2$. Therefore
\begin{equation*}
M^2-v(\xx)^2=A(\xx)^2=\xi_1(\xx)^2+\xi_2(\xx)^2.
\end{equation*}
We observe that we can choose the function $\xi_2$ so that \begin{equation}\label{eq:sxi}\xi_2(\xx)> 0\iff s(\xx)\in(0,1/2)\quad\text{and}\quad\xi_2(\xx)<0\iff s(\xx)\in(1/2,1).\end{equation}

Step 7: We now show that $\psi$ can be smoothly extended to $C_+$.

We first let $J(u)=(W^{-1}\cdot (W'\circ W^{-1}))(u)$. It follows from \eqref{eq:psii}, \eqref{eq:psii2}, \eqref{eq:alpha}, \eqref{eq:G}, \eqref{eq:K} and \eqref{eq:sxi} that, for $\xx\in\mathcal{N}\setminus C_+$,
\begin{equation}\label{eq:psiarg}
\psi(\xx)-\arg(\q(t_\xx))=\frac{v(\xx)}{2G\circ v(\xx)}\cdot (D(\xx)+\alpha\circ v(\xx)\cdot \xi_2(\xx)),
\end{equation}
where
\begin{equation}\label{eq:d1}
\begin{aligned}
D(\xx)=\sg(\xi_2(\xx))&\left(-\int_{-\pi/2}^{\pi/2}\int_{\zeta_x}^{\pi/2}\frac{M^2-A(\xx)^2\sin^2\zeta}{J(A(\xx)\sin\zeta)J(A(\xx)\sin\bar{\zeta})}\, d\bar{\zeta}\,d\zeta\right.\\&\quad\left.+\int_{\zeta_x}^{\pi/2}\int_{-\pi/2}^{\pi/2}\frac{M^2-A(\xx)^2\sin^2\zeta}{J(A(\xx)\sin\zeta)J(A(\xx)\sin\bar{\zeta})}\,d\bar{\zeta}\, d\zeta\right).\end{aligned}
\end{equation}
We observe that $\lim_{x\to C_+} D(\xx)=0$, so we can extend $\psi$ and $D$ to $C_+$, by letting $D|_{C_+}=0$. In particular, \eqref{eq:psiarg} and \eqref{eq:d1} hold for all $\xx\in\mathcal{N}$.
Since the function $\xx\mapsto\arg(\q(t_\xx))$ is smooth in $\mathcal{N}$, it follows from Step 5 and \eqref{eq:psiarg} that it suffices to show that $D$ is smooth.

From \eqref{eq:d1} we obtain:
\begin{align}
\sg(\xi_2(\xx))D(\xx)=&-\int_{-\pi/2}^{\zeta_x}\int_{\zeta_x}^{\pi/2}\frac{M^2-A(\xx)^2\sin^2\zeta}{J(A(\xx)\sin\zeta)J(A(\xx)\sin\bar{\zeta})}\, d\bar{\zeta}\,d\zeta\nonumber\\&+\int_{\zeta_x}^{\pi/2}\int_{-\pi/2}^{\zeta_x}\frac{M^2-A(\xx)^2\sin^2\zeta}{J(A(\xx)\sin\zeta)J(A(\xx)\sin\bar{\zeta})}\, d\bar{\zeta}\,d\zeta\nonumber\\=&\int_{-\pi/2}^{\zeta_x}\int_{\zeta_x}^{\pi/2}\frac{A(\xx)^2\sin^2\zeta}{J(A(\xx)\sin\zeta)J(A(\xx)\sin\bar{\zeta})}\, d\bar{\zeta}\,d\zeta\nonumber\\&-\int_{\zeta_x}^{\pi/2}\int_{-\pi/2}^{\zeta_x}\frac{A(\xx)^2\sin^2\zeta}{J(A(\xx)\sin\zeta)J(A(\xx)\sin\bar{\zeta})}\, d\bar{\zeta}\,d\zeta
\nonumber\\=&A(\xx)^2\int_{-\pi/2}^{\zeta_x}\int_{\zeta_x}^{\pi/2}\frac{\sin^2\zeta-\sin^2\bar{\zeta}}{J(A(\xx)\sin\zeta)J(A(\xx)\sin\bar{\zeta})}\, d\bar{\zeta}\,d\zeta.\label{eq:d2}
\end{align}
Since $J$ is a smooth function bounded away from $0$, we can write
\begin{equation}
\frac{1}{J(y)}=\sum_{j=0}^{2n+1} c_j y^n +O(y).\label{eq:isum}
\end{equation}
It follows from \eqref{eq:d2} and \eqref{eq:isum} that
\begin{align*}
D(\xx)&=\sg(\xi_2(\xx))\int_{-\pi/2}^{\zeta_x}\int_{\zeta_x}^{\pi/2}\sum_{j,k=0}^{2n+1}c_jc_{k}A(\xx)^{j+k+2}\sg(\xi_2(\xx))\sin^j\zeta \sin^{k}\bar{\zeta}(\sin^2\zeta-\sin^2\bar{\zeta})\, d\bar{\zeta}\,d\zeta\\&\qquad+O(A(\xx)^{2n+2})\\
&=\frac{1}{2}\sum_{j,k=0}^{2n+1}c_jc_kA(\xx)^{j+k}I_{jk}(\xx)+O(A(\xx)^{2n+2}),
\end{align*}
where
\begin{equation}\label{eq:ijk}
I_{jk}(\xx)=\int_{-\pi/2}^{\zeta_x}\int_{\zeta_x}^{\pi/2}(\sin^j\zeta\sin^k\bar{\zeta}+\sin^k\zeta\sin^j\bar{\zeta})(\sin^2\zeta-\sin^2\bar{\zeta})d\bar{\zeta}d\zeta.
\end{equation}

We now claim that $A(\xx)^{j+k+2}\sg(\xi_2(\xx))I_{jk}(\xx)$ is smooth. It follows from a simple calculation that the function
\[I^j(\zeta):=(\cos\zeta) \cdot P_j(\sin\zeta)+C_j \cdot \zeta\] is a primitive of the function $\sin^j \zeta$,
where $P_j$ is a degree $j-1$ polynomial such that the degree of all terms of $P_j$ has the opposite parity as that of $j$, and $C_j$ is a non-negative constant which equals $0$ if $j$ is odd. We recall that $\sin\zeta_{\xx}=\frac{\xi_1(\xx)}{A(\xx)}$ and $\cos\zeta_{\xx}=\frac{|\xi_2(\xx)|}{A(\xx)}$. By another simple calculation, we obtain
\begin{align}
I^j(\zeta)\bigg|_{\zeta=\zeta_{\xx}}^{\zeta=\pi/2}&=C_j\cdot\left(\frac{\pi}{2}-\zeta_{\xx}\right)-\frac{|\xi_2(\xx)|\cdot\tilde{P}_j(\xi_1(\xx),A^2(\xx))}{A(\xx)^j},\label{eq:ij1}\\
I^j(\zeta)\bigg|_{\zeta=-\pi/2}^{\zeta=\zeta_{\xx}}&=C_j\cdot\left(\zeta_{\xx}+\frac{\pi}{2}\right)+\frac{|\xi_2(\xx)|\cdot\tilde{P}_j(\xi_1(\xx),A^2(\xx))}{A(\xx)^j}.\label{eq:ij2}
\end{align}
Here $\tilde{P}_j$ is another polynomial.

From \eqref{eq:ijk}, we obtain
\begin{equation}\label{eq:idiff}
\begin{aligned}
I_{jk}(\xx)&=\left(I^{j+2}(\zeta)I^k(\bar{\zeta})-I^k(\zeta)I^{j+2}(\bar{\zeta})\right)\bigg|_{\bar{\zeta}=\zeta_{\xx}}^{\bar{\zeta}=\pi/2}\;\bigg|_{\zeta=-\pi/2}^{\zeta=\zeta_{\xx}}\\
&+\left(I^{k+2}(\zeta)I^j(\bar{\zeta})-I^{j}(\zeta)I^{k+2}(\bar{\zeta})\right)\bigg|_{\bar{\zeta}=\zeta_{\xx}}^{\bar{\zeta}=\pi/2}\;\bigg|_{\zeta=-\pi/2}^{\zeta=\zeta_{\xx}}.
\end{aligned}
\end{equation}
It follows from \eqref{eq:ij1}, \eqref{eq:ij2} and \eqref{eq:idiff} that
\[I_{jk}(\xx)=\frac{\pi |\xi_2(\xx)|}{A(\xx)^{j+k+2}}(C_{j+2}\cdot \tilde{P}_k-C_k\cdot\tilde{P}_{j+2}+C_{k+2}\cdot\tilde{P}_j-C_j\cdot\tilde{P}_{k+2})\left(\xi_1(\xx),A^2(\xx)\right).\]
Therefore $A(\xx)^{j+k+2}\sg(\xi_2(\xx))I_{jk}(\xx)$ is smooth.

Step 8: We now show that the function $A(\xx)e^{2\pi is(\xx)}$ can be smoothly extended to $C_+$.

It follows from \eqref{eq:si} and \eqref{eq:si2} that, in a neighborhood of $C_+$,
\[s(\xx)=\sg(\xi_2(\xx))\frac{1}{2 G\circ v(\xx)}\int_{\zeta_{\xx}}^{\pi/2}\frac{K\circ W^{-1}}{W'\circ W^{-1}}(A(\xx)\sin\zeta) d\zeta.
\]
We recall from \eqref{eq:sumGv} that $G(M)=\frac{\pi K(\bar{u})}{W'(\bar{u})}$. Now arguing as in the previous step and using \eqref{eq:ij1}, for a fixed $n$, we obtain:
\begin{align*}
s(\xx)=\sg(\xi_2(\xx))\left(\frac{1}{2\pi}\left(\frac{\pi}{2}-\zeta_{\xx}\right)\left(1+P^n(A(\xx)^2)\right)\right)+\xi_2(\xx)\tilde{P}^n(\xx)+O(A^{2n+2}(\xx)).
\end{align*}
Here $P^n$ is a degree $n$ polynomial and $\tilde{P}^n$ is a smooth function. Hence
\begin{align*}e^{2\pi i s(\xx)}&=\left(1+\underline{P}^n(\xx)+O\left(A^{2n+2}(\xx)\right)\right)\cdot\left(\cos\left(\sg(\xi_2(\xx))\left(\frac{\pi}{2}-\zeta_{\xx}\right)\right),\sin\left(\sg(\xi_2(\xx))\left(\frac{\pi}{2}-\zeta_{\xx}\right)\right)\right)\\
&=\left(1+\underline{P}^n(\xx)+O\left(A^{2n+2}(\xx)\right)\right)\cdot\left(\frac{\xi_1(\xx)}{A(\xx)},\frac{\xi_2(\xx)}{A(\xx)}\right).\end{align*}
where $\underline{P}^n$ is a smooth function which vanishes along $C_+$. It follows that
\begin{equation*}
A(\xx)e^{2\pi i s(\xx)}=(\xi_1(\xx),\xi_2(\xx))+S(\xx)+O(A^{2n+2}(\xx)).
\end{equation*}
Here $S$ and $O$ take values on $\C$ and $S$ is a smooth function vanishing along $C_+$.
Therefore we can smoothly extend $A(\xx)e^{2\pi i s(\xx)}$ to $C_+$.

Step 9: We now show that $\widehat{\Phi}$ can be smoothly extended to $C_+$.

We define $\Phi$ by substituting $\widehat{\Phi}$ by $\Phi$ in \eqref{eq:defpsi} and by considering the extensions of $\rho_1$, $\rho_2$ and $\phi_2$ to $C_+$. We note that $\phi_1$ is not defined on $C_+$, but $\Phi(\xx)$ is still well-defined on $C_+$, because $\rho_1|_{C_+}=0$. Since $\rho_2|_{C_+}=M$, it follows that $\sqrt{\rho_2(\xx)}$ is smooth on $C_+$ and hence, by Step 7, $\sqrt{\rho_2(\xx)}e^{2\pi i\phi_2(\xx)}=\sqrt{\rho_2(\xx)}e^{i\psi(\xx)}$ is smooth. Now from Definition \ref{def:coord} and \eqref{eq:h} we can write \[\sqrt{\rho_1(\xx)}e^{2\pi i \phi_1(\xx)}=A(\xx)h(\xx)e^{2\pi i s(\xx)}e^{-i\psi(\xx)}=(A(\xx)e^{2\pi i s(\xx)}) h(\xx)e^{-i\psi(\xx)}.\]
So, by Steps 7 and 8, the function $\sqrt{\rho_1(\xx)}e^{2\pi i\phi_1(\xx)}$ is smooth on $C_+$.
Therefore $\Phi$ is smooth on $C_+$.
\end{proof}

\subsection{The symplectomorphism}

We need one more lemma before proving Theorem \ref{thm:symp}. For $0<\epsilon<1$, let $\Omega_\epsilon$ be the region of the first quadrant of $\R^2$ bounded by the coordinate axes and the image of the function $(\rho_1,\rho_2)$. We now let $X_\epsilon$ be the toric domain $X_{\Omega_\epsilon}$.
\begin{lemma}\label{lem:omega}
For $\epsilon_1<\epsilon_2$, we have $X_{\epsilon_1}\supset X_{\epsilon_2}$. Moreover $\bigcup_\epsilon X_{\epsilon}=\interior(X_0)$.
\end{lemma}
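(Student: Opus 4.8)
The plan is to reduce both assertions to an explicit comparison of the planar curves bounding the regions $\Omega_\epsilon$. Recall from Lemma~\ref{lem:Phi} and Corollary~\ref{cor:diffeo} that $\partial\Omega_\epsilon$ is the union of the two segments of the coordinate axes joining the origin to $(c_\epsilon,0)$ and to $(0,c_\epsilon)$, together with the curve $\gamma_\epsilon(v)=(\rho_1(v),\rho_2(v))$ for $v\in[-M_\epsilon,M_\epsilon]$, where $\rho_1=G(v)-\alpha(v)v$ and $\rho_2=G(v)+(2\pi-\alpha(v))v$. Since $G'(v)=\alpha'(v)v$ (proof of Proposition~\ref{prop:diffeo}) one has $\rho_1'(v)=-\alpha(v)<0$ and $\rho_2'(v)=2\pi-\alpha(v)>0$, so $\gamma_\epsilon$ is the graph of a strictly decreasing function $f_\epsilon$, and, checking from \eqref{eq:alpha} that $\alpha_\epsilon$ is decreasing in $v$, $f_\epsilon$ is convex, consistent with $\Omega_\epsilon$ being a concave toric domain. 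The tangent line to $\gamma_\epsilon$ at the point with $\alpha_\epsilon(v)=2\pi\beta$ is $(1-\beta)x+\beta y=G_\epsilon(v)$; and evaluating the expressions for $G|_{C_+}$ and $\alpha|_{C_+}$ from the proof of Lemma~\ref{lem:Phi} (using $K(\bar u)=M^2/\bar u$, which is immediate from $F'(\bar u)=0$) yields the clean identity $c_\epsilon=2\pi M_\epsilon$. Since a convex decreasing function on an interval is the supremum of the affine functions carrying its tangent lines, a containment $\Omega_{\epsilon'}\subseteq\Omega_\epsilon$ will follow once (i) $c_{\epsilon'}\le c_\epsilon$, (ii) the range of slopes of $\gamma_{\epsilon'}$ is contained in that of $\gamma_\epsilon$, and (iii) for each common slope the corresponding tangent line of $\gamma_{\epsilon'}$ lies weakly below that of $\gamma_\epsilon$; and $\Omega_\epsilon\subseteq\interior(\Omega_0)$, resp.\ $\bigcup_\epsilon\Omega_\epsilon=\interior(\Omega_0)$, reduce to the same comparison against the billiard curve \eqref{eq:param}.

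Two of the ingredients will be easy. With $F_\epsilon(u)=u(1-\epsilon U(u))$ we have $M_\epsilon^2=\max_u F_\epsilon(u)=F_\epsilon(\bar u_\epsilon)$; since $F_{\epsilon'}<F_\epsilon$ pointwise for $\epsilon<\epsilon'$, this gives $M_{\epsilon'}<M_\epsilon$, hence $c_{\epsilon'}<c_\epsilon$, and since $U\to\infty$ at $1$ one checks $\bar u_\epsilon\to 1$ and $\epsilon U(\bar u_\epsilon)\to 0$, so $M_\epsilon\to 1$ and $c_\epsilon\to 2\pi$, the common intercept of \eqref{eq:param}. Likewise, the range of $\alpha_\epsilon$ is $[\alpha|_{C_+}^{\,\epsilon},\,2\pi-\alpha|_{C_+}^{\,\epsilon}]$ by the symmetry \eqref{eq:mv}, and from $\alpha|_{C_+}^{\,\epsilon}=\pi M_\epsilon/(\bar u_\epsilon W'(\bar u_\epsilon))$ one checks that $\alpha|_{C_+}^{\,\epsilon}$ is increasing in $\epsilon$ and tends to $0$; thus the slope ranges of the $\gamma_\epsilon$ are nested and exhaust $(-\infty,0)$. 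The real content is then (iii): the behaviour, for a fixed turning angle $\alpha$, of $G_\epsilon$ evaluated at the Reeb trajectory with that turning angle. I would establish, using the integral representations \eqref{eq:G} and \eqref{eq:alpha} — equivalently, the flow of \eqref{eq:ddot} together with the classical fact that finite-energy solutions converge, as $\epsilon\to0^+$, to billiard trajectories of $D^2$, cf.\ \cite{bengia,albmaz} and \S\ref{sub:bil} — that this quantity is non-increasing in $\epsilon$ and converges, as $\epsilon\to0^+$, to $2\sin(\alpha/2)$, the corresponding value along \eqref{eq:param}, while staying strictly below it for $\epsilon>0$. Granting this, the criteria above give $\Omega_{\epsilon'}\subseteq\Omega_\epsilon$ for $\epsilon<\epsilon'$ (the first assertion) and $\Omega_\epsilon\subseteq\interior(\Omega_0)$ for every $\epsilon$; and pointwise convergence $f_\epsilon\to f_0$ with $c_\epsilon\to 2\pi$ shows that every point of $\interior(X_0)$ lies in $X_\epsilon$ for all sufficiently small $\epsilon$, whence $\bigcup_\epsilon X_\epsilon=\interior(X_0)$.

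The hard part will be exactly this last comparison — the monotonicity in $\epsilon$, the limit, and the strictness — for $G_\epsilon$ as a function of the turning angle. The delicate region is near the outer turning radius $u_0^\epsilon$, which tends to the boundary value $u_{\max}^\epsilon=U^{-1}(1/\epsilon)\to 1$, where the integrand in \eqref{eq:G} is singular: this is precisely the boundary layer in which the smooth potential simulates a billiard reflection, and one must show that its contribution has the correct limit and varies monotonically in $\epsilon$. The remaining structural facts used above — convexity of $f_\epsilon$ and the monotonicity and vanishing of $\alpha|_{C_+}^{\,\epsilon}$ — are also read off from \eqref{eq:G}–\eqref{eq:alpha}, but are routine next to this estimate.
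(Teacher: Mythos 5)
Your proposal takes a genuinely different route from the paper's, but it leaves the crucial estimate unproven, and the route you chose makes that estimate harder than it needs to be.

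The paper compares the boundary curves at \emph{fixed} $v$, not fixed turning angle. Writing $\sigma_\epsilon(v)=G(v)-\alpha(v)v=\rho_1(v)$, it performs the change of variables $u=W^{-1}(A\sin\zeta)$ in \eqref{eq:sumin} to obtain (for $v>0$)
\[
\sigma_\epsilon(v)=\int_{u_1}^{u_0}\frac{\sqrt{F(u)-v^2}}{u}\,du,\qquad F(u)=u(1-\epsilon U(u)),
\]
where $u_1<u_0$ solve $F(u_i)=v^2$. As $\epsilon$ decreases with $v$ fixed, $F$ increases pointwise, $u_1$ decreases, $u_0$ increases, so both the integrand and the interval of integration grow; hence $\sigma_\epsilon(v)$ is monotone in $\epsilon$ \emph{by inspection}. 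Since $\rho_1$ is decreasing and $\rho_2=\rho_1+2\pi v$ is increasing in $v$, monotonicity of both coordinates at fixed $v$ yields $\Omega_{\epsilon_1}\supset\Omega_{\epsilon_2}$ directly, without appealing to convexity of $f_\epsilon$ or to support lines. The $\epsilon\to0$ limit is then an explicit integral computation (with a mild argument to justify the improper case $v=0$). Your support-line formulation requires, on top of this, proving convexity of $f_\epsilon$ (equivalently $\alpha'_\epsilon<0$), the monotonicity and limit of $\alpha|_{C_+}$, and — the real content — monotonicity and convergence of $G_\epsilon$ at fixed turning angle, which needs you to invert $v\mapsto\alpha_\epsilon(v)$ before comparing. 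That implicit $\epsilon$-dependence is exactly what the paper's fixed-$v$ parametrization avoids, which is why the ``hard part'' you flag becomes essentially trivial there.

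More to the point, your argument has a genuine gap: the central estimate — ``$G_\epsilon$ at fixed turning angle is non-increasing in $\epsilon$ and converges to $2\sin(\alpha/2)$, staying strictly below it'' — is only stated as a program (``I would establish\ldots''). You also defer the convexity of $f_\epsilon$ and the claims about $\alpha|_{C_+}$ to ``routine'' estimates that are not carried out. Although I believe all of these assertions are in fact true (they follow a posteriori from the containment, which the paper proves), a proof needs to establish at least one monotone quantity directly. If you want to rescue your framework, the easiest fix is not to work at fixed $\alpha$ at all: show instead the monotonicity of $\sigma_\epsilon(v)=\rho_1(v)$ at fixed $v$ via the integral formula above, deduce $\rho_2(v)=\sigma_\epsilon(v)+2\pi v$ is also monotone, and conclude the containment without ever mentioning tangent lines. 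Your calculation of the intercept $c_\epsilon=2\pi M_\epsilon$ and of $M_\epsilon\to1$ is correct and could remain as part of that argument.
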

\begin{proof}
It follows from the definition of $\rho_1$ and $\rho_2$ that the boundary of $\Omega_\epsilon$ is parametrized by \begin{equation}\label{eq:par2} \left(G(v)-\alpha(v) v,G(v)+(2\pi-\alpha(v))v\right),\end{equation} for $v\in[-M,M]$. We recall that $G$, $\alpha$ and $M$ depend on $\epsilon$. Let $\sigma_\epsilon(v)=G(v)-\alpha(v) v$. We claim that $\sigma_\epsilon(v)$ increases as $\epsilon$ decreases for a fixed $v$. We first show that for $v>0$. After a change of variables, it follows from \eqref{eq:sumin} that for $v>0$,
\begin{equation}
\sigma_\epsilon(v)=\int_{u_1}^{u_0}\frac{\sqrt{F(u)-v^2}}{u}\,du.\label{eq:int}
\end{equation}
Here $u_0$ and $u_1$ are the maximum and minimum values of $|\qq(t)|^2$ where $(\qq(t),\pp(t))$ is a Reeb trajectory with $v=v(\qq(t),\pp(t))$. Since $v^2=u_0(1-\epsilon U(u_0))=u_1(1-\epsilon U(u_1))$, we conclude that if we fix $v$ and decrease $\epsilon$, the value of $u_0$ is increased and the value of $u_1$ is decreased. Moreover, for a fixed $u$, the value of $F(u)=u(1-\epsilon U(u))$ increases as $\epsilon$ decreases. Therefore as $\epsilon$ decreases, the integrand in \eqref{eq:int} increases and so does the interval of integration. Thus for a fixed $v>0$, $\sigma_\epsilon(v)$ increases as $\epsilon$ decreases. Now since $\sigma_\epsilon(v)=\sigma_\epsilon(-v)-2\pi v$, it follows that $\sigma_\epsilon(v)$ increases as $\epsilon$ decreases for a fixed $v<0$. So for $v<0$, $\sigma_\epsilon(v)$ also increases as $\epsilon$ decreases. Finally, for $v=0$, it follows from \eqref{udot} and from the definition of $K$ that
\begin{equation}\label{eq:G0}\sigma_\epsilon(0)=G(0)=\int_0^{u_0}\frac{K(u)}{\sqrt{F(u)}}\,du=\int_0^{u_0}\frac{\sqrt{F(u)}}{u}\,du.\end{equation}
So $\sigma_\epsilon(0)$ increases as $\epsilon$ decreases. Hence for fixed $v$, both coordinates of \eqref{eq:par2} increase.
Therefore for $\epsilon_1<\epsilon_2$, we have $X_{\epsilon_1}\supset X_{\epsilon_2}$.

We will now show that $\bigcup_\epsilon X_{\epsilon}=\interior(X_0)$. Recall that $M=\sqrt{F(\bar{u})}$, where $\bar{u}$ is the critical point of $F$. It follows from a simple calculation that as $\epsilon$ decreases, the values of $\bar{u}$ and $M$ increase to $1$. Now let $V(\alpha_0)=\cos\left(\frac{\alpha_0}{2}\right)$. Note that $V$ is a diffeomorphism between $(0,2\pi)$ and $(-1,1)$. Moreover for every $\alpha_0\in(0,2\pi)$, the number $\sigma_\epsilon(V(\alpha_0))$ is defined for $\epsilon$ small enough. We claim that 
\begin{equation}\label{eq:claim}\lim_{\epsilon\to 0}\sigma_\epsilon(V(\alpha_0))=2\sin\left(\frac{\alpha_0}{2}\right)-\alpha\cos\left(\frac{\alpha_0}{2}\right),\end{equation} for every $\alpha_0\in(0,2\pi)$.
From this it follows that $\bigcup_\epsilon X_{\epsilon}=\interior(X_0)$.

We will now prove \eqref{eq:claim}. First we assume that $\alpha_0\in(0,\pi)$. So $V(\alpha_0)\in(0,1)$. Let $v=V(\alpha_0)$ and let $u_0,u_1$ be as above. Since $v^2=u_1(1-\epsilon U(u_1))$ and $U(u_1)$ is bounded, it follows that $u_1\to v^2$ as $\epsilon\to 0$. Moreover $u_0\to 1$ as $\epsilon\to 0$, since $u_0>\bar{u}$ and $\bar{u}\to 1$. We observe that the function $\frac{\sqrt{F(u)-v^2}}{u}$ is a continuous function of $(\epsilon,u)$ for $0\le\epsilon\le 1/2$ and $u_0\le u\le u_1$. Note that $u_0$ and $u_1$ depend on $\epsilon$, so the region described above is not a rectangle. It follows that we can take the limit as $\epsilon\to 0$ in \eqref{eq:int} and we obtain
\begin{align*}
\lim_{\epsilon\to 0}\sigma_\epsilon(v)&=\int_{v^2}^1\frac{\sqrt{u-v^2}}{u}\,du=2\sqrt{1-v^2}-2v\arctan\frac{\sqrt{1-v^2}}{v}\\
&=2\sin\left(\frac{\alpha_0}{2}\right)-\alpha_0 \cos\left(\frac{\alpha_0}{2}\right).
\end{align*}
Hence \eqref{eq:claim} holds for $\alpha_0\in(0,\pi)$. For $\alpha_0\in(\pi,2\pi)$, we have $v=V(\alpha_0)\in(-1,0)$.
Since $G(-v)=G(v)$ and $\alpha(-v)=2\pi-\alpha$, we obtain
\begin{align*}\lim_{\epsilon\to 0}\sigma_\epsilon(v)&=\lim_{\epsilon\to 0}(G(-v)-\alpha(-v))(-v)-2\pi v)\\
&=2\sin\frac{\alpha_0(-v)}{2}-\alpha_0(-v)\cos\frac{\alpha_0(-v)}{2}-2\pi\cos\frac{\alpha_0(v)}{2}\\
&=2\sin\frac{2\pi-\alpha_0(v)}{2}-(2\pi-\alpha_0(v))\cos\frac{2\pi-\alpha_0(v)}{2}-2\pi\cos\frac{\alpha_0(v)}{2}\\
&=2\sin\frac{\alpha_0(v)}{2}-\alpha_0(v)\cos\frac{\alpha_0(v)}{2}.
\end{align*}
So \eqref{eq:claim} holds for $\alpha_0\in(\pi,2\pi)$. For $\alpha_0=\pi$, we claim that we can still take the limit as $\epsilon\to 1$ inside the integral \eqref{eq:G0}. In fact, since $\frac{\sqrt{F(u)}}{u}$ is continuous in $(\epsilon,u)$ for $\epsilon$ away from $0$, it is enough to show that
\begin{equation}\label{eq:limeps}\lim_{\epsilon\to 0}\int_0^{1/2} \left(\frac{\sqrt{F(u)}}{u}-\frac{1}{\sqrt{u}}\right)\,du=0.\end{equation}
To prove that, we first observe that
\[\int_0^{1/2}\left(\frac{\sqrt{F(u)}}{u}-\frac{1}{\sqrt{u}}\right)\,du=\int_0^{1/2}\frac{\epsilon U(u)}{\sqrt{u}(\sqrt{1-\epsilon U(u)}+1)}\,du.\]
Since the function $\frac{U(u)}{\sqrt{1-\epsilon U(u)}+1}$ is uniformly bounded in $[0,1/2]$ for $\epsilon$ sufficiently small and $\int_0^{1/2}\frac{1}{\sqrt{u}}\,du$ converges, it follows that \eqref{eq:limeps} holds and therefore
\[\lim_{\epsilon\to 0} \sigma_\epsilon(V(\pi))=\lim_{\epsilon\to 0} \sigma_\epsilon(0)=\int_0^1\frac{1}{\sqrt{u}}\,du=2.\]
So \eqref{eq:claim} holds for $\alpha_0=\pi$.
\end{proof}

\begin{proof}[Proof of Theorem \ref{thm:symp}]
It follows from Lemma \ref{lem:Phi} that $\Phi$ is a strict contactomorphism from $(\partial P_\epsilon,\lambda)$ to $(\partial X_{\epsilon},\lambda)$. So $\Phi$ induces a symplectomorphism
 \begin{equation}\label{eq:symplecto}\left((-\delta,\infty)\times \partial P_\epsilon, d(e^t\lambda|_{\partial P_\epsilon})\right)\cong \left((-\delta,\infty)\times \partial X_{\epsilon}, d(e^t\lambda|_{\partial X_{\epsilon}})\right),\end{equation} for $\delta>0$. We recall that the flow of the Liouville vector field induces symplectomorphisms between neighborhoods of $\R^4\setminus P_\epsilon$ and $\R^4\setminus X_{\epsilon}$ and the two manifolds in \eqref{eq:symplecto} for small $\delta$, respectively. Therefore we obtain a symplectomorphism between neighborhoods of $\R^4\setminus P_\epsilon$ and $\R^4\setminus X_{\epsilon}$. Therefore, by Gromov-McDuff \cite[Theorem 9.4.2]{ms} there is a symplectomorphism of $\R^4$ that maps $P_{\epsilon}$ to $X_{\epsilon}$ and whose restriction to $\partial P_{\epsilon}$ is $\Phi$. We observe that for varying $\epsilon$, these symplectomorphisms define a smooth family of symplectic embeddings $\Phi_\epsilon:P_\epsilon\hookrightarrow \C^2$, whose images are $X_{\epsilon}$.

By Lemma \ref{lem:omega}, the domains $X_{\epsilon}$ are nested and \[\bigcup_{\epsilon}X_{\epsilon}=\interior(X_0).\]
We would like to take the limit of the maps $\Phi_\epsilon$, but we cannot do that in general, because the maps $\Phi_\epsilon$ are not nested.
Instead we use an argument from \cite{mcduffblowup}. Let $\{\epsilon_n\}$ be a decreasing sequence of real numbers converging to $0$.
Since $\Phi_{\epsilon_{n-1}}(P_{\epsilon_{n-1}})\subset \Phi_{\epsilon_{n}}(P_{\epsilon_{n}})$, we have an isotopy of symplectic embeddings $\Phi_t\circ \Phi_{\epsilon_{n+1}}^{-1}:P_{\epsilon_{n-1}}\to P_{\epsilon_{n}}$ for $t\in[\epsilon_{n},\epsilon_{n-1}]$. By the symplectic isotopy extension theorem, we can extend it to an isotopy of symplectomorphisms of $P_{\epsilon_n}$ for $t\in[\epsilon_{n},\epsilon_{n-1}]$. We compose this isotopy with $\Phi_{\epsilon_{n}}$ and take $t=\epsilon_{n-1}$ to obtain another symplectic embedding, denoted by $\Psi_{n}: P_{\epsilon_{n}}\to \mathbb{C}^2$, with image $X_{\epsilon_n}$ such that
\begin{itemize}
\item $\Psi_{n}|_{P_{\epsilon_{n-1}}}=\Phi_{\epsilon_{n-1}}$,
\item $\Psi_{n}=\Phi_{\epsilon_{n}}$ in a neighborhood of $\partial P_{\epsilon_{n}}$.
\end{itemize}
So we can define the embedding $\Psi_{\infty}:\interior(P_L)\to\mathbb{C}^2$ by $\Psi_\infty(x)=\Psi_n(x)$ for $x\in P_{\epsilon_n}\setminus P_{\epsilon_{n-1}}$. It follows from the properties above that $\Psi_\infty$ is smooth and that its image is
$\bigcup_\epsilon X_{\epsilon}=\interior(X_0)$.
\end{proof}

\section{Embedding $\interior(X_0)$ into $E(4,3\sqrt{3})$}\label{sec:emb}

In this section, we will prove Proposition \ref{prop:emb2}. Before that, we recall some constructions for toric domains. Let $T(a,b)$ be a triangle with vertices at $(0,0)$, $(a,0)$ and $(0,b)$. Then $X_{T(a,b)}=E(a,b)$. Let $T'(a,b)$ be a translation of $T(a,b)$ that is contained in the interior of $\R_{\ge 0}^2$. We let $E'(a,b)=X_{T'(a,b)}$. We note that a different choice of $T'(a,b)$ induces a symplectomorphic $X_{T'(a,b)}$. Therefore we do not need to specify the translation as long as $T'(a,b)$ does not intersect the coordinate axes. We will write $B'(a):=E'(a,a)$, $T'(a)=T'(a,a)$ and $T(a)=T(a,a)$.

We now recall two facts about toric domains that will be important in what follows.
\begin{lemma}\label{lem:sl2z}
If $\Omega\subset\R_+^2$ does not intersect the axes and $A\in SL(2,\Z)$, then $X_\Omega$ is symplectomorphic to $X_{A\cdot \Omega}$.
\end{lemma}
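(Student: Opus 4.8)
The plan is to produce an explicit symplectomorphism of the open subset of $\C^2$ lying over $\interior(\R_+^2)$ that realizes the linear change of coordinates $A$ on the moment-map image. Write $A = \left[\begin{smallmatrix} a & b \\ c & d\end{smallmatrix}\right] \in SL(2,\Z)$. On the dense open set $\{z_1 z_2 \neq 0\}$ introduce action-angle coordinates $(\mu_1,\mu_2,\theta_1,\theta_2)$ by $\mu_j = \pi|z_j|^2$ and $\theta_j = \arg z_j$, so that $\omega = \sum_j d\mu_j \wedge d\theta_j$ and $X_\Omega \cap \{z_1 z_2 \neq 0\}$ is the set where $(\mu_1,\mu_2) \in \interior(\Omega)$. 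Since $\Omega$ does not meet the axes, $\interior(\Omega)$ is a relatively compact subset of $\interior(\R_+^2)$, so these coordinates cover all of $\interior(X_\Omega)$ up to a measure-zero set; more precisely $\interior(X_\Omega)$ is diffeomorphic to $\interior(\Omega) \times T^2$.

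Next I define the candidate map $\Psi$ on $\interior(\Omega)\times T^2$ by letting it act as $A$ on the $\mu$-variables and as the inverse transpose $(A^{-1})^{T} = \left[\begin{smallmatrix} d & -c \\ -b & a\end{smallmatrix}\right]$ on the angle variables:
\[
(\mu_1,\mu_2,\theta_1,\theta_2)\ \longmapsto\ \bigl(a\mu_1 + b\mu_2,\ c\mu_1 + d\mu_2,\ d\theta_1 - c\theta_2,\ -b\theta_1 + a\theta_2\bigr).
\]
Because $A$ and $(A^{-1})^T$ have integer entries, the formula on the angle variables descends to a well-defined smooth map $T^2 \to T^2$, and because $A \in SL(2,\Z)$ this map is a diffeomorphism of $T^2$ (its inverse being given by $A^T$). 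On the $\mu$-side, $A$ carries $\interior(\Omega)$ diffeomorphically onto $\interior(A\cdot\Omega)$. Hence $\Psi$ is a diffeomorphism onto $\interior(X_{A\cdot\Omega})$ minus its coordinate-hyperplane part. A direct computation shows $\Psi^*\bigl(\sum_j d\mu_j \wedge d\theta_j\bigr) = \sum_j d\mu_j \wedge d\theta_j$: the key point is that the transpose-inverse pairing makes the two linear substitutions cancel, since for $A \in GL(2)$ one has $(A \mu)^T \cdot d((A^{-1})^T\theta) = \mu^T A^T (A^{-1})^T \, d\theta = \mu^T d\theta$ at the level of the primitive $\sum_j \mu_j\, d\theta_j$, so $\Psi$ preserves (a primitive of) the symplectic form.

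The remaining issue — and the only genuinely delicate point — is that $\Psi$ is so far only defined away from $\{z_1 z_2 = 0\}$, and I must check it extends smoothly across that set to a symplectomorphism $\interior(X_\Omega) \to \interior(X_{A\cdot\Omega})$. Here I use the standing hypothesis that $\Omega$ avoids the axes: it guarantees that neither $X_\Omega$ nor $X_{A\cdot\Omega}$ contains any point with $z_1 = 0$ or $z_2 = 0$, so in fact there is nothing to extend — $\{z_1 z_2 \neq 0\}$ already contains all of $X_\Omega$ and $X_{A\cdot\Omega}$. Thus the map constructed above is the desired symplectomorphism, and this is exactly why the hypothesis "$\Omega$ does not intersect the axes" is needed; without it one would have to analyze the behavior of $\Psi$ near the circles $z_j = 0$, where the angle coordinate $\theta_j$ degenerates, and in general no such extension exists unless $A$ fixes the corresponding edge. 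I expect writing down the form computation carefully (keeping track of which matrix acts where) to be the main place to be careful, but it is a routine linear-algebra verification once the transpose-inverse convention on the angles is in place.
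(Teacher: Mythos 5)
Your argument is exactly the ``standard calculation'' the paper invokes without writing out: the paper itself gives no proof of Lemma~\ref{lem:sl2z}, only citing it as routine, and your explicit action-angle construction (act by $A$ on the action variables, by $(A^{-1})^T$ on the angles, use $\det A = 1$ and integrality for well-definedness on $T^2$, and use the axis-avoidance hypothesis to sidestep the degeneration of the angle coordinates at $\{z_1 z_2 = 0\}$) is precisely how one fills that in. One small normalization slip: with $\mu_j = \pi|z_j|^2$ and $\theta_j = \arg z_j$ the standard form is $\frac{1}{2\pi}\sum_j d\mu_j \wedge d\theta_j$, not $\sum_j d\mu_j \wedge d\theta_j$; to get the cleaner identity you should take $\theta_j \in \R/\Z$ equal to $\frac{1}{2\pi}\arg z_j$, which does not affect the rest of the argument.
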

\begin{lemma}\label{lem:tr}
For every $\epsilon>0$, there exists a symplectic embedding \[E(a,b)\hookrightarrow (1+\epsilon)E'(a,b).\]
\end{lemma}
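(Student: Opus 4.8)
The essential difficulty is that $E'(a,b)$, and hence $(1+\epsilon)E'(a,b)$, avoids a neighborhood of the coordinate hyperplanes $\{z_1=0\}$ and $\{z_2=0\}$, whereas the ellipsoid $E(a,b)$ meets both of them; in particular no such embedding can respect the toric structure, and one must ``unfold'' $E(a,b)$ off the axes, the extra volume (a factor $(1+\epsilon)^2$) paying for the unfolding. Two harmless reductions come first. Using the scaling property, together with the fact (noted before Lemma~\ref{lem:sl2z}) that $E'(a,b)$ may be taken to be $X_{T'(a,b)}$ for a translate of $T(a,b)$ as close to the coordinate axes as we like, it suffices to produce, for some $A\in(a,(1+\epsilon)a)$, $B\in(b,(1+\epsilon)b)$ and some small $\delta>0$, a symplectic embedding of $E(a,b)$ into $X_\Omega$ with $\Omega=T(A,B)+(\delta,\delta)$: a direct computation shows that such an $X_\Omega$ sits inside $(1+\epsilon)E'(a,b)$ once $\delta$ is small relative to $\epsilon$.

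The only nontrivial geometric input is one-dimensional: for every $c>0$ and $\eta>0$, the closed disk $D(c)=\{z\in\C\,:\,\pi|z|^2\le c\}$ admits an area-preserving embedding into the open annulus $\{\delta<\pi|z|^2<\delta+c+\eta\}$; this is immediate from Moser's theorem, since the annulus has area $c+\eta>c$ and hence contains a smoothly embedded closed disk of area exactly $c$. Taking a product of two such embeddings and using $E(a,b)\subset P(a,b)$ already proves the polydisk analogue of the lemma, but that construction ignores the ellipsoidal shape, so for $E'(a,b)$ one must unfold \emph{fiberwise}, one complex coordinate at a time. Regard $E(a,b)$ as fibered over the disk $D(b)$ in the $z_2$-plane, the fiber over $z_2$ being the disk of area $a(1-\pi|z_2|^2/b)$ in the $z_1$-plane. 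Replace each fiber by a disk embedded, via the one-dimensional fact, in the thin annulus $\{\delta<\pi|z_1|^2<\delta+a(1-\pi|z_2|^2/b)+\eta\}$, chosen to depend smoothly on $z_2$ and with its rotational phase adjusted so that the total map $(z_1,z_2)\mapsto(\psi_{z_2}(z_1),z_2)$ is symplectic. The image $E_1$ has $\pi|z_1|^2\ge\delta$ everywhere, still meets $\{z_2=0\}$, and has moment image within $O(\eta)$ of $T(a,b)$ translated by $(\delta,0)$. A second unfolding, now of the $z_2$-coordinate over the $z_1$-base, pushes $E_1$ off $\{z_2=0\}$ too and delivers the required embedding of $E(a,b)$ into $X_\Omega$ with $A,B$ slightly larger than $a,b$.

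The step I expect to be the main obstacle is making the fiberwise unfoldings genuinely symplectic and fitting them together. A map that unfolds every fiber ``in the same way'' picks up mixed $dz_1\wedge dz_2$ terms, so the family of disk embeddings must be chosen together with a compatible horizontal distribution --- equivalently, one solves the associated connection/flux equation over the base. For the first unfolding the base $D(b)$ is simply connected and this is routine; for the second the base is an annulus, so one must check that the flux around the core circle vanishes, which it does because the fiber areas depend only on $|z_1|$, not on $\arg z_1$, so no monodromy obstruction appears. What remains is bookkeeping: tracking the $O(\eta)$ and $O(\delta)$ errors to confirm that the final image lies in $(1+\epsilon)E'(a,b)$.
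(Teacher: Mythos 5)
The paper does not prove this lemma: it is stated as a result of Traynor and cited (Traynor's ``Symplectic packing constructions''), with a pointer to a second reference; the paper offers no argument of its own, so there is no ``paper's proof'' to compare against. Your proposal is therefore a genuine attempt at a proof that the paper takes on faith.

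The reduction to producing an embedding of $E(a,b)$ into $X_{T(A,B)+(\delta,\delta)}$ is fine, and so is the one-dimensional fact (an open annulus of area $>c$ contains an embedded closed topological disk of area $c$, by removing a radial slit, applying Riemann mapping, and invoking Moser). The first fiberwise unfolding over the simply connected base $D(b)$ is also plausible in outline: the flux obstruction to making a fiberwise family of area-preserving disk embeddings into a genuine symplectomorphism lives in $H^1$ of the base, which vanishes for a disk.

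The gap is in the second unfolding, and specifically in the sentence that dismisses the monodromy obstruction. After the first unfolding the fiber of $E_1$ over a point $z_1$ in the $z_1$-annulus is $\{z_2 : z_1 \in \operatorname{image}(\psi_{z_2})\}$. Each $\operatorname{image}(\psi_{z_2})$ is an embedded \emph{disk} in the annulus $\{\delta<\pi|z_1|^2<\delta+a(1-\pi|z_2|^2/b)+\eta\}$, and an embedded disk cannot be invariant under rotation in $z_1$ (a rotationally invariant subset of the annulus would be an annulus, not a disk). Hence the fiber of $E_1$ over $z_1$, and in particular its area, does depend on $\arg z_1$, contradicting the claim that ``the fiber areas depend only on $|z_1|$''. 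There is no way to restore this symmetry by a cleverer choice of $\psi_{z_2}$: the whole point of the first unfolding was to destroy the $S^1$-symmetry in $z_1$, because the purely toric version of that step (translation in the action variable $\rho_1$) is not defined at the degenerate locus $\rho_1=0$. Beyond this, even if the fiber areas around the core circle were constant, equality of areas does not by itself kill the flux: the flux of a loop of area-preserving embeddings is a rotation-number-type invariant, not merely a function of areas, so the stated justification would still be insufficient. As written, the argument therefore does not prove the lemma; the strategy of two successive one-axis unfoldings would need either a fundamentally different handling of the monodromy over the annular base, or replacement by a single construction that moves off both axes at once (which is closer to what Traynor actually does).
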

Lemma \ref{lem:sl2z} follows from a standard calculation and Lemma \ref{lem:tr} is a result of Traynor \cite{traynor}, see also \cite{lmt}. We also observe that $E'(a,b)\subset E(a,b)$.
Now let $X_\Omega$ be a concave toric domain and let $w_1\ge w_2\ge \dots$ be its weight sequence as defined in \S\ref{sec:pack}. This procedure also determines a decomposition of $\Omega$ into (nondisjoint) regions, which are affine equivalent to triangles. We will now recall how to obtain the embedding \eqref{eq:pack}. Let $\epsilon>0$. By Lemma \ref{lem:tr}, $B(w_i)\hookrightarrow\left(1+\frac{\epsilon}{2}\right)B'(w_i)$ for all $i$.

For every $i$, the interior of $T'(w_i)$ is contained in a region of $\Omega$ after multiplying by a certain number of matrices in \[\left\{\left[\begin{array}{cc}1 &0\\1&1\end{array}\right]^{-1},\left[\begin{array}{cc}1&1\\0&1\end{array}\right]^{-1}\right\}\subset SL(2,\Z).\]
So $(1+\frac{\epsilon}{2})B'(w_i)\hookrightarrow (1+\epsilon)X_\Omega$, for every $i$ and the images of these embeddings are pairwise disjoint. Therefore we obtain the embedding \eqref{eq:pack}.

We now state a simple lemma that will be useful in the proof of Proposition \ref{prop:emb2} and whose proof is a straightforward consequence of \eqref{eq:disj}.
\begin{lemma}\label{lemma:sqcup}
Let $A_1,A_2,B_1,B_2$ be symplectic 4-manifolds such that $c_k(A_i)=c_k(B_i)$ for all $k$, for $i=1,2$. Then for all $k$
\[c_k(A_1\sqcup A_2)= c_k(B_1\sqcup B_2).\]
\end{lemma}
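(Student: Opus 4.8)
The plan is to apply the disjoint-union axiom \eqref{eq:disj} for ECH capacities directly in the case $n=2$. First I would write, using \eqref{eq:disj},
\[
c_k(A_1\sqcup A_2)=\max\left\{c_{k_1}(A_1)+c_{k_2}(A_2)\,\big|\,k_1+k_2=k\right\},
\]
where the maximum ranges over pairs of nonnegative integers $(k_1,k_2)$ with $k_1+k_2=k$. This is a finite index set (with $k+1$ elements), so the maximum is genuinely attained; note also that the case $k_1=0$ or $k_2=0$ contributes the term $c_0=0$, consistently with the normalization $c_0(X,\omega)=0$.

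Next I would invoke the hypothesis $c_{k_i}(A_i)=c_{k_i}(B_i)$ for all $k_i$ and $i=1,2$. Substituting term by term inside the maximum, each summand $c_{k_1}(A_1)+c_{k_2}(A_2)$ equals $c_{k_1}(B_1)+c_{k_2}(B_2)$, so the two finite sets of real numbers over which we maximize are literally the same set. Hence
\[
\max\left\{c_{k_1}(A_1)+c_{k_2}(A_2)\,\big|\,k_1+k_2=k\right\}=\max\left\{c_{k_1}(B_1)+c_{k_2}(B_2)\,\big|\,k_1+k_2=k\right\},
\]
and applying \eqref{eq:disj} once more, now to the pair $(B_1,B_2)$, identifies the right-hand side with $c_k(B_1\sqcup B_2)$. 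This completes the argument.

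There is essentially no obstacle here; the lemma is purely a bookkeeping consequence of \eqref{eq:disj}. The only point requiring a word of care is that the capacities are allowed to take the value $+\infty$, but \eqref{eq:disj} is asserted in $[0,\infty]$ and the maximum of a finite subset of $[0,\infty]$ is well behaved, so equality of the two index sets still forces equality of their maxima. If desired, an immediate induction on $n$ upgrades this to the statement $c_k\bigl(\coprod_{i=1}^n A_i\bigr)=c_k\bigl(\coprod_{i=1}^n B_i\bigr)$ whenever $c_k(A_i)=c_k(B_i)$ for all $i$ and $k$, though only the case $n=2$ is used later.
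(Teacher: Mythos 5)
Your argument is correct and is precisely the one the paper intends: it states that the lemma ``is a straightforward consequence of \eqref{eq:disj}'' without spelling out the details, and your write-up simply supplies them by substituting $c_{k_i}(A_i)=c_{k_i}(B_i)$ term-by-term inside the maximum. Nothing further is needed.
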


\begin{proof}[Proof of Proposition \ref{prop:emb2}]
\begin{figure}
\centering
\begin{subfigure}[b]{0.27\textwidth}
\centering
\def\svgwidth{0.9\textwidth}
\begingroup%
  \makeatletter%
  \providecommand\color[2][]{%
    \errmessage{(Inkscape) Color is used for the text in Inkscape, but the package 'color.sty' is not loaded}%
    \renewcommand\color[2][]{}%
  }%
  \providecommand\transparent[1]{%
    \errmessage{(Inkscape) Transparency is used (non-zero) for the text in Inkscape, but the package 'transparent.sty' is not loaded}%
    \renewcommand\transparent[1]{}%
  }%
  \providecommand\rotatebox[2]{#2}%
  \ifx\svgwidth\undefined%
    \setlength{\unitlength}{192.484375bp}%
    \ifx\svgscale\undefined%
      \relax%
    \else%
      \setlength{\unitlength}{\unitlength * \real{\svgscale}}%
    \fi%
  \else%
    \setlength{\unitlength}{\svgwidth}%
  \fi%
  \global\let\svgwidth\undefined%
  \global\let\svgscale\undefined%
  \makeatother%
  \begin{picture}(1,0.66498904)%
    \put(0,0){\includegraphics[width=\unitlength]{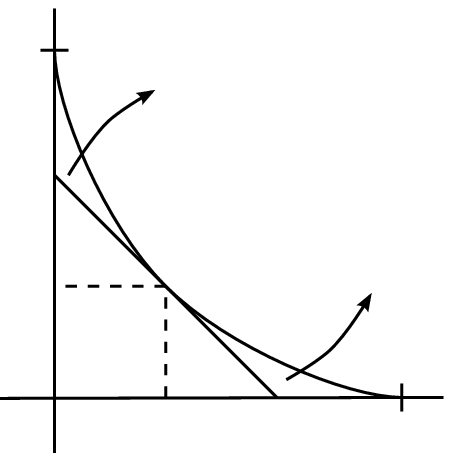}}%
\put(0.4,0.8){\color[rgb]{0,0,0}\makebox(0,0)[lb]{\smash{$\Omega_3$}}}%
    \put(0.8,0.4){\color[rgb]{0,0,0}\makebox(0,0)[lb]{\smash{$\Omega_2$}}}%
    \put(0.32,0.02){\color[rgb]{0,0,0}\makebox(0,0)[lb]{\smash{$x_1$}}}%
    \put(0.57,0.02){\color[rgb]{0,0,0}\makebox(0,0)[lb]{\smash{$w_1$}}}%
    \put(0.82,0.02){\color[rgb]{0,0,0}\makebox(0,0)[lb]{\smash{$x_2$}}}%
    \put(0,0.35){\color[rgb]{0,0,0}\makebox(0,0)[lb]{\smash{$y_1$}}}%
    \put(-0.02,0.6){\color[rgb]{0,0,0}\makebox(0,0)[lb]{\smash{$w_1$}}}%
    \put(-0.02,0.9){\color[rgb]{0,0,0}\makebox(0,0)[lb]{\smash{$y_2$}}}
   
  \end{picture}%
\endgroup%
\caption{The region $\Omega_0$:\\
$x_1=2$,\\
$w_1=4$,\\
$x_2=2\pi$,\\
$y_1=2$,\\
$y_2=2\pi$.}
\end{subfigure}
\quad
\begin{subfigure}[b]{0.27\textwidth}
\centering
\def\svgwidth{0.9\textwidth}
\begingroup%
  \makeatletter%
  \providecommand\color[2][]{%
    \errmessage{(Inkscape) Color is used for the text in Inkscape, but the package 'color.sty' is not loaded}%
    \renewcommand\color[2][]{}%
  }%
  \providecommand\transparent[1]{%
    \errmessage{(Inkscape) Transparency is used (non-zero) for the text in Inkscape, but the package 'transparent.sty' is not loaded}%
    \renewcommand\transparent[1]{}%
  }%
  \providecommand\rotatebox[2]{#2}%
  \ifx\svgwidth\undefined%
    \setlength{\unitlength}{135.46875bp}%
    \ifx\svgscale\undefined%
      \relax%
    \else%
      \setlength{\unitlength}{\unitlength * \real{\svgscale}}%
    \fi%
  \else%
    \setlength{\unitlength}{\svgwidth}%
  \fi%
  \global\let\svgwidth\undefined%
  \global\let\svgscale\undefined%
  \makeatother%
  \begin{picture}(1,0.91534025)%
    \put(0,0){\includegraphics[width=\unitlength]{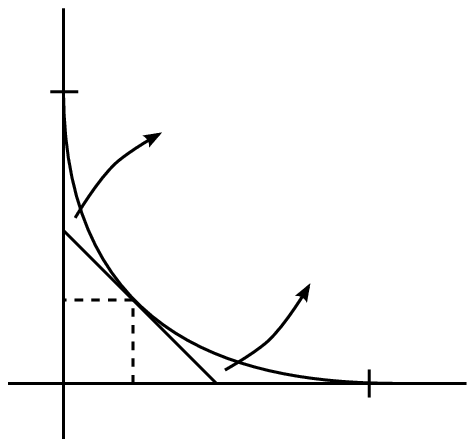}}%
    \put(0.75,0){\color[rgb]{0,0,0}\makebox(0,0)[lb]{\smash{$x_2'$}}}%
    \put(0.4094579,0){\color[rgb]{0,0,0}\makebox(0,0)[lb]{\smash{$w_2$}}}%
    \put(-0.02,0.73825332){\color[rgb]{0,0,0}\makebox(0,0)[lb]{\smash{$y_2'$}}}%
    \put(-0.02,0.44298227){\color[rgb]{0,0,0}\makebox(0,0)[lb]{\smash{$w_2$}}}%
    \put(0.23229527,0){\color[rgb]{0,0,0}\makebox(0,0)[lb]{\smash{$x_1'$}}}%
    \put(-0.01,0.29534674){\color[rgb]{0,0,0}\makebox(0,0)[lb]{\smash{$y_1'$}}}%
    \put(0.37730816,0.62966389){\color[rgb]{0,0,0}\makebox(0,0)[lb]{\smash{$\Omega_5$}}}%
    \put(0.64127604,0.37){\color[rgb]{0,0,0}\makebox(0,0)[lb]{\smash{$\Omega_4$}}}%
  \end{picture}%
\endgroup%
\caption{The region $\Omega_2$:\\
$x_1'=2\sqrt{3}+\pi/3-4$,\\ $w_2=3\sqrt{3}-4$,\\ $x_2'=2\pi-4$,\\$y_1'=\sqrt{3}-\pi/3$,\\ $y_2'=2$}
\end{subfigure}
\quad
\begin{subfigure}[b]{0.27\textwidth}
\centering
\def\svgwidth{0.9\textwidth}
\begingroup%
  \makeatletter%
  \providecommand\color[2][]{%
    \errmessage{(Inkscape) Color is used for the text in Inkscape, but the package 'color.sty' is not loaded}%
    \renewcommand\color[2][]{}%
  }%
  \providecommand\transparent[1]{%
    \errmessage{(Inkscape) Transparency is used (non-zero) for the text in Inkscape, but the package 'transparent.sty' is not loaded}%
    \renewcommand\transparent[1]{}%
  }%
  \providecommand\rotatebox[2]{#2}%
  \ifx\svgwidth\undefined%
    \setlength{\unitlength}{126.62827148bp}%
    \ifx\svgscale\undefined%
      \relax%
    \else%
      \setlength{\unitlength}{\unitlength * \real{\svgscale}}%
    \fi%
  \else%
    \setlength{\unitlength}{\svgwidth}%
  \fi%
  \global\let\svgwidth\undefined%
  \global\let\svgscale\undefined%
  \makeatother%
  \begin{picture}(1,0.66375383)%
    \put(0,0){\includegraphics[width=\unitlength]{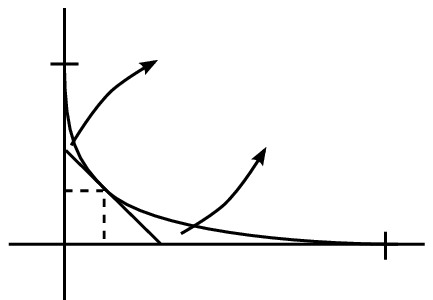}}%
    \put(0.8442291,0.02){\color[rgb]{0,0,0}\makebox(0,0)[lb]{\smash{$x_2''$}}}%
    \put(0.35,0.02){\color[rgb]{0,0,0}\makebox(0,0)[lb]{\smash{$w_4$}}}%
    \put(-0.01,0.53748072){\color[rgb]{0,0,0}\makebox(0,0)[lb]{\smash{$y_2''$}}}%
    \put(-0.01,0.36){\color[rgb]{0,0,0}\makebox(0,0)[lb]{\smash{$w_4$}}}%
    \put(0.21245865,0.02){\color[rgb]{0,0,0}\makebox(0,0)[lb]{\smash{$x_1''$}}}%
    \put(0,0.25){\color[rgb]{0,0,0}\makebox(0,0)[lb]{\smash{$y_1''$}}}%
    \put(0.40198978,0.53748072){\color[rgb]{0,0,0}\makebox(0,0)[lb]{\smash{$\Omega_7$}}}%
    \put(0.59152092,0.3795381){\color[rgb]{0,0,0}\makebox(0,0)[lb]{\smash{$\Omega_6$}}}%
  \end{picture}%
\endgroup%
\caption{The region $\Omega_4$:\\
$x_1''=3\sqrt{2}+\pi\sqrt{2}/4-3\sqrt{3}$,\\ $w_4=4\sqrt{2}-3\sqrt{3}$,\\ $x_2''=2\pi-3\sqrt{3}$,\\$y_1''=\sqrt{2}-\pi\sqrt{2}/4$,\\ $y_2''=\sqrt{3}-\pi/3$}
\end{subfigure}
\caption{The first weights of $\Omega_0$}
\label{fig:w}
\end{figure}
By Theorem \ref{thm:cc}, it is enough to show that for all $k$,
\begin{equation}\label{eq:inc}
c_k(X_0)\le c_k(E(4,3\sqrt{3})).
\end{equation}
We recall that $\Omega_0$ is the region bounded by the coordinate axes and the curve \eqref{eq:param}. Let $w_1\ge w_2\ge w_3\ge \dots$ be the weight sequence of $\Omega_0$. 
 It follows from an easy calculation that $w_1= 4$, see Figure \ref{fig:w}(a). We obtain domains $\Omega_2$ and $\Omega_3$ by applying an affine transformation to $\Omega_0\setminus T(4)$ as explained in \S\ref{sec:pack}. Moreover $\Omega_2$ and $\Omega_3$ are equal since the curve \eqref{eq:param} is symmetric with respect to the reflection across the line $y=x$. In particular $w_2=w_3$ and all the following weights come in pairs. Continuing the calculation, we obtain $w_2=w_3=3\sqrt{3}-4$, see Figure \ref{fig:w}(b). Let $\Omega_4$ and $\Omega_5$ be the next regions in this process obtained from $\Omega_2$, see Figure \ref{fig:w}(b). We now observe that the next weight coming from $\Omega_4$ is $4\sqrt{2}-3\sqrt{3}$. It turns out that $w_4=w_5=4\sqrt{2}-3\sqrt{3}$, but we do not need this fact for what follows. Let $\Omega_6$ and $\Omega_7$ be the regions obtained from $\Omega_4$, see Figure \ref{fig:w}(c). We also let $\widetilde{\Omega}_5$, $\widetilde{\Omega}_6$ and $\widetilde{\Omega}_7$ be the respective regions obtained from $\Omega_3$. We note that $\Omega_i$ and $\widetilde{\Omega}_i$ are equal for $i=5,6,7$. It follows from Theorem \ref{thm:ccfhr} and Lemma \ref{lemma:sqcup} that
\begin{equation}\label{eq:part}
\begin{aligned}
c_k(X_0)&=c_k\left(\coprod_{i=1}^{\infty} B(w_i)\right)\\
&=c_k\left( B(4)\sqcup\coprod_{j=1}^2 B(3\sqrt{3}-4)\sqcup\coprod_{j=1}^2 B(4\sqrt{2}-3\sqrt{3})\sqcup\coprod_{j=5}^7\left( X_{\Omega_j}\sqcup X_{\widetilde{\Omega}_j}\right)\right).\end{aligned}
\end{equation}
\captionsetup{justification=centering}
\begin{figure}
 \centering
 \def\svgwidth{0.7\textwidth}
 \begingroup%
  \makeatletter%
  \providecommand\color[2][]{%
    \errmessage{(Inkscape) Color is used for the text in Inkscape, but the package 'color.sty' is not loaded}%
    \renewcommand\color[2][]{}%
  }%
  \providecommand\transparent[1]{%
    \errmessage{(Inkscape) Transparency is used (non-zero) for the text in Inkscape, but the package 'transparent.sty' is not loaded}%
    \renewcommand\transparent[1]{}%
  }%
  \providecommand\rotatebox[2]{#2}%
  \ifx\svgwidth\undefined%
    \setlength{\unitlength}{457.15bp}%
    \ifx\svgscale\undefined%
      \relax%
    \else%
      \setlength{\unitlength}{\unitlength * \real{\svgscale}}%
    \fi%
  \else%
    \setlength{\unitlength}{\svgwidth}%
  \fi%
  \global\let\svgwidth\undefined%
  \global\let\svgscale\undefined%
  \makeatother%
  \begin{picture}(1,0.74083999)%
    \put(0,0){\includegraphics[width=\unitlength]{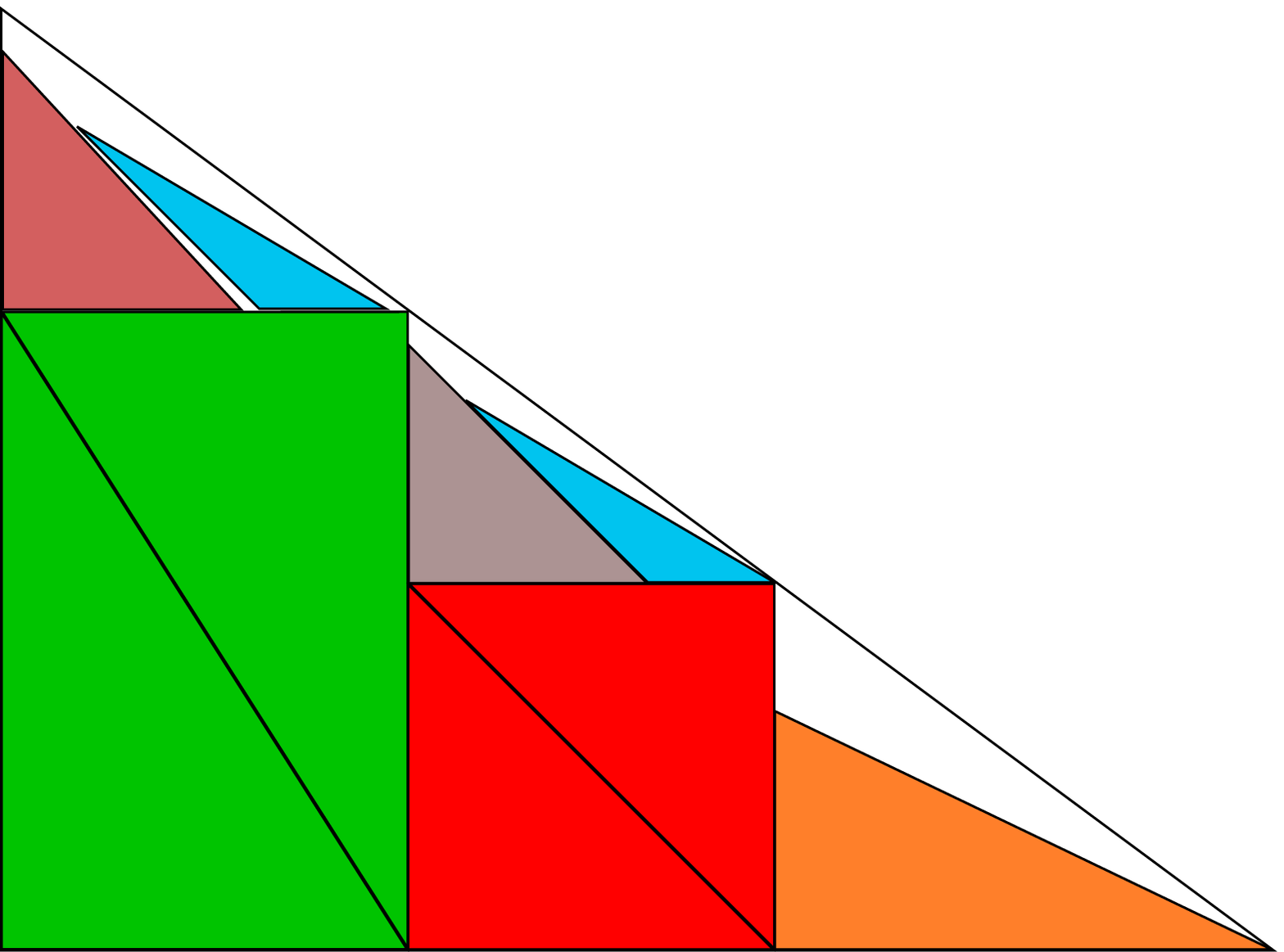}}%
    \put(0.05048051,0.1500421){\color[rgb]{0,0,0}\makebox(0,0)[lb]{\smash{$A$}}}%
    \put(0.1868957,0.34846416){\color[rgb]{0,0,0}\makebox(0,0)[lb]{\smash{$A$}}}%
    \put(0.3852471,0.06964782){\color[rgb]{0,0,0}\makebox(0,0)[lb]{\smash{$D$}}}%
    \put(0.47178116,0.20246748){\color[rgb]{0,0,0}\makebox(0,0)[lb]{\smash{$D$}}}%
    \put(0.19929708,0.51898321){\color[rgb]{0,0,0}\makebox(0,0)[lb]{\smash{$C$}}}%
    \put(0.49693023,0.30815967){\color[rgb]{0,0,0}\makebox(0,0)[lb]{\smash{$C$}}}%
    \put(0.34701112,0.35541139){\color[rgb]{0,0,0}\makebox(0,0)[lb]{\smash{$E$}}}%
    \put(0.02567775,0.57168907){\color[rgb]{0,0,0}\makebox(0,0)[lb]{\smash{$F$}}}%
    \put(0.67054954,0.05083102){\color[rgb]{0,0,0}\makebox(0,0)[lb]{\smash{$B$}}}%
  \end{picture}%
\endgroup%
\caption{Fitting the triangles into $T'(1.607,1.19)$\\
$A=T'(0.512,0.804)$, $B=T'(0.627,0.304)$, $C=T'(0.16,0.23)$,\\
$D=T'(0.464)$, $E=T'(0.304)$, $F=T'(0.323,0.304)$.}
\label{fig:final}
 \end{figure}
We now observe that the first three terms in the weight decomposition of $T(4,3\sqrt{3})$ are $4,3\sqrt{3}-4,3\sqrt{3}-4$. After three steps, the concave toric domain obtained by the procedure explained in \S\ref{sec:pack} is $E(12-6\sqrt{3},3\sqrt{3}-4)$. Again, by Theorem \ref{thm:ccfhr} and Lemma \ref{lemma:sqcup}, it follows that
\begin{equation}\label{eq:part2}
c_k(E(4,3\sqrt{3}))=c_k\left(B(4)\sqcup\coprod_{j=1}^2 B(3\sqrt{3}-4)\sqcup E(12-6\sqrt{3},3\sqrt{3}-4)\right).
\end{equation} 
From \eqref{eq:part} and \eqref{eq:part2} we conclude that, in order to prove \eqref{eq:inc}, it suffices to show that
\begin{equation}\label{eq:part3}
c_k\left(\coprod_{j=1}^2 B(4\sqrt{2}-3\sqrt{3})\sqcup\coprod_{j=5}^7\left( X_{\Omega_j}\sqcup X_{\widetilde{\Omega}_j}\right)\right)\le c_k\left(E(12-6\sqrt{3},3\sqrt{3}-4)\right).
\end{equation}
 It follows from a simple calculation that
\begin{align*}
\Omega_5,\widetilde{\Omega}_5&\subset T\left(2\sqrt{3}+\frac{\pi}{3}-4,6-3\sqrt{3}\right)\subset T (0.512,0.804) ,\\
\Omega_6,\widetilde{\Omega}_6&\subset  T\left(2\pi-4\sqrt{2},\sqrt{2}-\frac{\pi\sqrt{2}}{4}\right)\subset T (0.627,0.304) ,\\
\Omega_7,\widetilde{\Omega}_7&\subset T\left(3\sqrt{2}-3\sqrt{3}+\frac{\pi\sqrt{2}}{4},4\sqrt{3}-4\sqrt{2}-\frac{\pi}{3}\right)\subset T (0.16,0.23),\\
T(4\sqrt{2}&-3\sqrt{3})\subset T(0.461)\subset T(0.464),\\
T(12-&6\sqrt{3},3\sqrt{3}-4)\supset T(1.607,1.19).
\end{align*}
From Theorem \ref{thm:ccfhr} and Lemma \ref{lemma:sqcup}, we deduce that \[c_k(E(0.627,0.304))=c_k(B(0.304)\sqcup E(0.323,0.304)).\] Using Lemma \ref{lemma:sqcup} and the calculations above, we conclude that \eqref{eq:part3} follows from the existence of the embedding:
\begin{equation}\label{eq:emb2}
\begin{aligned}
 \coprod_{j=1}^2&\left(E'(0.512,0.804)\sqcup B'(0.464)  \sqcup E'(0.16,0.23)\right)\\ &\sqcup  E'(0.627,0.304)  \sqcup B'(0.304)\sqcup E'(0.323,0.304)\hookrightarrow  E'(1.607,1.19).
 \end{aligned}
 \end{equation}
The embedding \eqref{eq:emb2} can be constructed by hand. In fact, we can fit triangles of the appropriate size corresponding to each domain in the left hand side of \eqref{eq:emb2} into $T'(1.607,1.19)$, see Figure \ref{fig:final}. Note that it is crucial to check that the two rectangles obtained by taking two copies of $A$ and $D$, respectively, do fit into $T'(1.607,1.19)$, which follows from a simple computation. Therefore the disjoint union of the induced toric domains embed into $E'(1.607,1.19)$. So we have proved \eqref{eq:emb2} and hence \eqref{eq:part3}.
\end{proof}

\bibliographystyle{siam}
\bibliography{bib}

\end{document}